\theoremstyle{plain}
\newtheorem{teo}{Theorem}[section]
\newtheorem{coro}[teo]{Corollary}
\newtheorem{lemma}[teo]{Lemma}
\newtheorem{pro}[teo]{Proposition}
\theoremstyle{defi}
\newtheorem{defi}[teo]{Definition}
\theoremstyle{remark}
\newtheorem{rem}[teo]{Remark}
\renewcommand{\d}{\operatorname{d}}
\newcommand{\Exp}[1]{\operatorname{e}^{#1}}
\newcommand{\diag}{\operatorname{diag}}
\newcommand{\C}{\mathbb{C}}
\newcommand{\N}{\mathbb{N}}
\renewcommand{\intercal}{{\mathsf{\scriptscriptstyle T}}}
\DeclareRobustCommand{\gaussk}{\DOTSB\gaussk@\slimits@}
\newcommand{\gaussk@}{\mathop{\vphantom{\sum}\mathpalette\bigcal@{K}}}
\newcommand{\bigcal@}[2]{%
	\vcenter{\m@th
		\sbox\z@{$#1\sum$}%
		\dimen@=\dimexpr\ht\z@+\dp\z@
		\hbox{\resizebox{!}{0.8\dimen@}{$\mathcal{K}$}}%
	}%
}
\newcommand{\cfracplus}{\mathbin{\cfracplus@}}
\newcommand{\cfracplus@}{%
	\sbox\z@{$\dfrac{1}{1}$}%
	\sbox\tw@{$+$}%
	\raisebox{\dimexpr\dp\tw@-\dp\z@\relax}{$+$}%
}
\newcommand{\cfracdots}{\mathord{\cfracdots@}}
\newcommand{\cfracdots@}{%
	\sbox\z@{$\dfrac{1}{1}$}%
	\sbox\tw@{$+$}%
	\raisebox{\dimexpr\dp\tw@-\dp\z@\relax}{$\cdots$}%
}
\newcommand*{\relrelbarsep}{.386ex}
\newcommand*{\relrelbar}{%
	\mathrel{%
		\mathpalette\@relrelbar\relrelbarsep
	}%
}
\newcommand*{\@relrelbar}[2]{%
	\raise#2\hbox to 0pt{$\m@th#1\relbar$\hss}%
	\lower#2\hbox{$\m@th#1\relbar$}%
}
\providecommand*{\rightrightarrowsfill@}{%
	\arrowfill@\relrelbar\relrelbar\rightrightarrows
}
\providecommand*{\leftleftarrowsfill@}{%
	\arrowfill@\leftleftarrows\relrelbar\relrelbar
}
\providecommand*{\xrightrightarrows}[2][]{%
	\ext@arrow 0359\rightrightarrowsfill@{#1}{#2}%
}
\providecommand*{\xleftleftarrows}[2][]{%
	\ext@arrow 3095\leftleftarrowsfill@{#1}{#2}%
}
\newcommand*\pFqskip{8mu}
\newcommand*\pFq{\begingroup
	\catcode`\,\active
	\def ,{\mskip\pFqskip\relax}%
	\dopFq
}
\def\dopFq#1#2#3#4#5{%
	{}_{#1}F_{#2}\biggl[\genfrac..{0pt}{}{#3}{#4};#5\biggr]%
	\endgroup
}
\begin{document}
\title[Discrete hypergeometric multiple orthogonality]{ Toda  and Laguerre--Freud equations and tau functions for \\ hypergeometric discrete multiple orthogonal polynomials}

%\subtitle{	A
%	Riemann-Hilbert Problem perspective}
%
\author[I Fernández-Irisarri]{Itsaso Fernández-Irisarri$^1$}
\email{$^1$itsasofe@ucm.es}
\address{$^{1,2}$Departamento de Física Teórica, Universidad Complutense de Madrid, Plaza Ciencias 1, 28040-Madrid, Spain}

\author[M Mañas]{Manuel Mañas$^2$}
\email{$^2$manuel.manas@ucm.es}

%\address{$^2$Departamento de Física Teórica, Universidad Complutense de Madrid, Plaza Ciencias 1, 28040-Madrid, Spain \&
%	Instituto de Ciencias Matematicas (ICMAT), Campus de Cantoblanco UAM, 28049-Madrid, Spain}

\begin{abstract}
In this paper, the authors investigate the case of discrete multiple orthogonal polynomials with two weights on the step line, which satisfy Pearson equations. The discrete multiple orthogonal polynomials in question are expressed in terms of $\tau$-functions, which are \emph{double} Wronskians of generalized hypergeometric series. The shifts in the spectral parameter for type II and type I multiple orthogonal polynomials are described using banded matrices. It is demonstrated that these polynomials offer solutions to multicomponent integrable extensions of the nonlinear Toda equations. Additionally, the paper characterizes extensions of the Nijhoff--Capel totally discrete  Toda equations. The hypergeometric $\tau$-functions are shown to provide solutions to these integrable nonlinear equations. Furthermore, the authors explore Laguerre-Freud equations, nonlinear equations for the recursion coefficients,  with a particular focus on the multiple Charlier, generalized multiple Charlier, multiple Meixner II, and generalized multiple Meixner II cases.
\end{abstract}

\subjclass{42C05,33C20,33C45,33C47,35C05,35Q51,37K10}

\keywords{Pearson equations, discrete multiple orthogonal polynomials, Toda type integrable equations, tau functions, generalized hypergeometric series, Laguerre--Freud equations}
\maketitle

%\newpage
\tableofcontents
\allowdisplaybreaks
\section{Introduction}

Discrete multiple orthogonal polynomials, $\tau$-functions, generalized hypergeometric series, Pearson equations, Laguerre--Freud equations, and Toda type equations are intriguing mathematical objects that have attracted considerable attention due to their deep connections with various areas of mathematics and physics. These intertwined concepts play a fundamental role in understanding the underlying structures and dynamics of discrete systems, integrable models, and special functions. In this paper, we delve into the rich mathematical theory of discrete multiple orthogonal polynomials, explore their relationship with $\tau$-functions and generalized hypergeometric series, and investigate their applications in Laguerre--Freud equations and Toda type equations.

Orthogonal polynomials \cite{Ismail, Chihara, Beals} are a fascinating and powerful tool in Applied Mathematics and Theoretical Physics, with a wide range of applications in various fields. They possess remarkable properties and have been extensively studied for their applications in approximation theory, numerical analysis, and many other areas. Among the vast family of orthogonal polynomials, two special classes that have attracted significant attention are multiple orthogonal polynomials and discrete orthogonal polynomials.

Back in 1895, Karl Pearson \cite{Pearson}, in the context of fitting curves to real data, introduced his famous family of frequency curves by means of the differential equation
$\frac{f'(x)}{f (x)} = \frac{p_1(x)}{p_2(x)}$, 
where $f$ is the probability density and $p_i$ is a polynomial in $x$ of degree at most $i$, $i = 1, 2$. Since then, a vast bibliography has been developed regarding the properties of Pearson distributions. The connection between Pearson equations and orthogonal polynomials lies in the fact that the solutions to Pearson equations often coincide with or are closely related to certain families of orthogonal polynomials. They are characterized by a weight function, which determines the orthogonality properties of the associated orthogonal polynomials. These equations have been extensively studied due to their close relationship with special functions, such as the classical orthogonal polynomials (e.g., Legendre, Hermite, Laguerre) and their generalizations. In this work, we will use the Gauss--Borel approach to orthogonal polynomials, for a review paper see \cite{intro}. In previous papers, we have explored the application of this technique to the study of discrete hypergeometric orthogonal polynomials, Toda equations, $\tau$-functions, and Laguerre--Freud equations, see \cite{Manas_Fernandez-Irisarri, Fernandez-Irrisarri_Manas, Fernandez-Irrisarri_Manas_2}.

Multiple orthogonal polynomials \cite{nikishin_sorokin, Ismail, andrei_walter, afm} arise when multiple weight functions are involved in the orthogonality conditions. Unlike classical orthogonal polynomials, where a single weight function is used, multiple orthogonal polynomials exhibit a more intricate structure due to the interplay of multiple weight functions. These polynomials have been studied extensively and find applications in areas such as simultaneous approximation, random matrix theory, and statistical mechanics. For the appearance and use of the Pearson equation in multiple orthogonality, see \cite{Coussment, bfm}, see also \cite{bfm2}. On the other hand, discrete orthogonal polynomials \cite{Ismail, Nikiforov_Suslov_Uvarov} emerge when the orthogonality conditions are defined on a discrete set of points. These polynomials find applications in various discrete systems, such as combinatorial optimization, signal processing, and coding theory. Discrete orthogonal polynomials provide a valuable tool for analyzing and solving problems in discrete settings, where the underlying structure is often characterized by a finite or countable set of points. Discrete Pearson equations play an important role in the classification of classical discrete orthogonal polynomials \cite{Nikiforov_Suslov_Uvarov}. When a discrete Pearson equation is fulfilled by the weight, we are dealing with semiclassical discrete orthogonal polynomials, see \cite{diego_paco, diego_paco1, diego, diego1}. Multiple discrete orthogonal polynomials were discussed in \cite{Arvesu}.

Integrable discrete equations \cite{Hietarinta} have emerged as a fascinating and important field of study in Mathematical Physics, providing insights into the behavior of discrete dynamical systems with remarkable properties. These equations possess a rich algebraic and geometric structure, allowing for the existence of soliton solutions, conservation laws, and integrability properties. The concept of integrability in discrete equations goes beyond mere solvability and extends to the existence of an abundance of conserved quantities and symmetries \cite{Adler}. This integrability property enables the development of powerful mathematical methods for studying and understanding their behavior. One of the key features of integrable discrete equations is their connection to the theory of orthogonal polynomials. These polynomials play a central role in the construction of discrete equations with special properties. The link between discrete equations and orthogonal polynomials provides a deep understanding of their solutions, recursion relations, and symmetry properties.

The $\tau$-function \cite{harnad} is a key concept in the theory of integrable systems and serves as a bridge between the spectral theory of linear operators and the dynamics of soliton equations. It has profound connections with algebraic geometry, representation theory, and special functions.
Semiclassical orthogonal polynomials, along with their corresponding functionals supported on complex curves, have been extensively studied in relation to isomonodromic tau functions and matrix models by Bertola, Eynard, and Harnad \cite{bertola}.
 In the context of discrete multiple orthogonal polynomials, the $\tau$-function captures the underlying integrable structure and provides insights into the dynamics and symmetries of the corresponding discrete systems. Its study enables us to uncover deep connections between different mathematical objects and uncover hidden patterns.

Generalized hypergeometric series, another important topic in this research, constitute a class of special functions that arise in diverse mathematical contexts \cite{Hipergeometricos, LibrodeHypergeom}. They possess remarkable properties and have been extensively studied due to their connections with combinatorics, number theory, and mathematical physics. Generalized hypergeometric series play a key role in expressing solutions of differential equations, evaluating integrals, and understanding the behavior of various mathematical models. They have a pivotal role in the Askey scheme that gathers together different important families of orthogonal families within a chain of limits \cite{Koekoek}. See \cite{AskeyII} for an extension of the Askey scheme to multiple orthogonality. Recently, we have completed a part of the multiple Askey scheme by finding hypergeometric expressions for the Hahn type I multiple orthogonal polynomials and their descendants in the multiple Askey scheme \cite{BDFM} (but for the multiple Hermite case).

Laguerre--Freud equations are a family of differential equations that emerge in the study of orthogonal polynomials associated with exponential weights. For orthogonal polynomials, these Laguerre--Freud equations represent nonlinear relations for the recursion coefficients in the three-term recurrence relation. In other words, we are dealing with a set of nonlinear equations for the coefficients $\{\beta_n,\gamma_n\}$ of the three-term recursion relations $zP_n(z) = P_{n+1}(z) + \beta_nP_n(z) + \gamma_nP_{n-1}(z)$ satisfied by the orthogonal polynomial sequence. These Laguerre--Freud equations take the form $\gamma_{n+1} = \mathcal{G}(n, \gamma_n, \gamma_{n-1}, \dots, \beta_n, \beta_{n-1},\dots)$ and $\beta_{n+1} = \mathcal{B}(n,\gamma_{n+1},\gamma_n,\dots,\beta_n,\beta_{n-1},\dots)$. Magnus \cite{magnus,magnus1,magnus2,magnus3}, referring to \cite{laguerre,freud}, named these types of relations Laguerre--Freud relations. Several papers discuss Laguerre--Freud relations for the generalized Charlier, generalized Meixner, and type I generalized Hahn cases \cite{smet_vanassche,clarkson,filipuk_vanassche0,filipuk_vanassche1,filipuk_vanassche2,diego}.

Laguerre--Freud equations are also connected to Painlevé equations. The Painlevé equations are a set of nonlinear ordinary differential equations that were first introduced by the French mathematician Paul Painlevé in the early 20th century. These equations have attracted significant attention due to their integrability properties and the rich mathematical structures associated with their solutions. The connection between orthogonal polynomials and Painlevé equations lies in the fact that certain families of orthogonal polynomials can be related to solutions of specific Painlevé equations. This relationship provides deep insights into the solvability and analytic properties of the Painlevé equations and allows for the construction of explicit solutions using the theory of orthogonal polynomials. For an account of the connection between these fields of study, see \cite{Van Assche,clarkson,clarkson2}.

Moreover, building upon the mentioned studies \cite{Manas_Fernandez-Irisarri, Fernandez-Irrisarri_Manas, Fernandez-Irrisarri_Manas_2}, this paper investigates the relationship between discrete multiple orthogonal polynomials, $\tau$-functions, generalized hypergeometric series, and specific equations such as Laguerre--Freud equations and Toda type equations.

Toda type equations, on the other hand, are nonlinear partial differential-difference equations with wide-ranging applications in mathematical physics and integrable systems \cite{harnad,Hietarinta}. Exploring the connections between these equations and the aforementioned mathematical objects sheds light on the interplay between discrete systems, special functions, and integrable models.

In this paper, we aim to provide a comprehensive overview of the theory of discrete multiple orthogonal polynomials, $\tau$-functions, generalized hypergeometric series, Laguerre--Freud equations, and Toda type equations. We will delve into the mathematical properties, explicit representations, and recurrence relations of discrete multiple orthogonal polynomials. Furthermore, we will explore the relationship between these polynomials, $\tau$-functions, and generalized hypergeometric series. Finally, we will investigate the applications of these concepts to Laguerre--Freud equations and Toda type equations, including both continuous and totally discrete types, showcasing their relevance in the field of mathematical physics and integrable systems.

The layout of the paper is as follows. We continue this section with a basic introduction to discrete multiple orthogonal polynomials and their associated tau functions. In the second section, we delve into Pearson equations for multiple orthogonal polynomials, generalized hypergeometric series, and the corresponding hypergeometric discrete multiple orthogonal polynomials. We discuss their properties, including the Laguerre--Freud matrix that models the shift in the spectral variable, as well as contiguity relations and their consequences. 

In the third section, we present two of the main findings discussed in this paper. In Theorem \ref{theo:third_order_PDE}, we present nonlinear partial differential equations of the Toda type, specifically of the third order, for which the hypergeometric tau functions provide solutions. Additionally, in Theorem \ref{rem:tau_Nijhoff_Capel}, we provide a completely discrete system, extending the completely discrete Nikhoff--Capel Toda equation, and its solutions in terms of the hypergeometric tau functions. 

Finally, in the fourth section, we present explicit Laguerre--Freud equations for the first time for the multiple versions of the generalized Charlier and generalized Meixner II orthogonal polynomials.

\subsection{Discrete multiple orthogonality on the step line with two weights}
Let us introduce the following vectors of monomials in the independent variable $x$
\begin{align*}X&=\begin{bNiceMatrix}1&x&x^2&\Cdots[shorten-end=7pt]\end{bNiceMatrix}^\intercal,& 
	X^{(1)}&=\begin{bNiceMatrix}1&0&x&0&\Cdots[shorten-end=7pt,shorten-start=5pt]\end{bNiceMatrix}^\intercal, & X^{(2)}&=\begin{bNiceMatrix}0&1&0&x&\Cdots[shorten-end=7pt,shorten-start=5pt]\end{bNiceMatrix}^\intercal. 
\end{align*}
Let us also consider a couple of weights $w^{(1)},w^{(2)}:\N_0\to\C$  defined
on the homogeneous lattice $\N_0$.  The corresponding moment matrix is given by 
\begin{align*} 
	\mathscr M&\coloneq \sum^{\infty}_{k=0}X (k)\big(X^{(1)} (k)w^{(1)}(k)+X^{(2)} (k)w^{(2)}(k)\big)^{\intercal}.
\end{align*}
%where $\xi\coloneq X^{(1)} w^{(1)}+X^{(2)} w^{(2)}$.
We also consider the matrices
\begin{align*}
	I^{(1)}&\coloneq\diag(1,0,1,0,\dots), & 	I^{(2)}&\coloneq\diag(0,1,0,1,\dots).
\end{align*} 
and  shift matrices
\begin{align*}
\Lambda\coloneq	\begin{bNiceMatrix}[columns-width=auto]
		0 & 1 & 0&\Cdots[shorten-end=10pt]&\\
		0& 0&1 &\Ddots[shorten-end=10pt]&\\
		\Vdots[shorten-start=5pt,shorten-end=7pt]&\Ddots[shorten-end=-22pt]&\Ddots[shorten-end=-13pt]& \Ddots[shorten-end=9pt]&\\
		&&& &
	\end{bNiceMatrix},
\end{align*}
and $\Lambda^{(a)}\coloneq \Lambda^2 I^{(a)}$, $a\in\{1,2\}$.
The following equations are satisfied
\begin{align*}
	\Lambda X(x)&=xX(x),&
		 \Lambda^{(1)}X^{(1)}(x)&=xX^{(1)}(x),& \Lambda^{(2)}X^{(2)}(x)&=xX^{(2)}(x),
&	\Lambda^{(1)}X^{(2)}(x)&=0,&\Lambda^{(2)}X^{(1)}(x)&=0.
\end{align*}
The  moment matrix satisfies 
\begin{align*}
	\Lambda \mathscr M= \mathscr M(\Lambda^\intercal)^2,
\end{align*}
so that we say that a matrix is bi-Hankel matrix.

The Gauss-Borel  or $LU$ factorization of the moment matrix is
\begin{align}\label{eq:LU}
	 \mathscr M=S^{-1}H\tilde{S}^{-\intercal}
\end{align}
where $S$ and $\tilde{S}$ are lower unitriangular matrices and $H$ is a diagonal matrix.

We define the truncated matrix $\mathscr M^{[n]}$ as the $n$-th leading principal submatrix of the moment matrix $
\mathscr M$, i.e. built up with the first $n$ rows and columns. Then, we introduce the so called $\tau$-function, as
\begin{align}\label{eq:tau}
	\tau_n=\det\mathscr M^{[n]}.  
\end{align}
Factorization \eqref{eq:LU} exists whenever $\tau_n\neq 0$ for $n\in\N_0$.

 Matrices $S$ and $\tilde{S}$  expand as power series in $\Lambda^\intercal$ with diagonal matrices coefficients
\begin{gather*}
\begin{aligned}
	S&=I+\Lambda^\intercal S^{[1]}+(\Lambda^\intercal)^2 S^{[2]}+\cdots, & S^{-1}&=I+\Lambda^\intercal S^{[-1]}+(\Lambda^\intercal)^2 S^{[-2]}+\cdots,
\end{aligned} \\ \begin{aligned}
\tilde{S}&=I+\Lambda^\intercal \tilde{S}^{[1]}+(\Lambda^\intercal)^2 \tilde{S}^{[2]}+\cdots,& \tilde{S}^{-1}&=I+\Lambda^\intercal \tilde{S}^{[-1]}+(\Lambda^\intercal)^2 \tilde{S}^{[-2]}+\cdots.
\end{aligned}
\end{gather*}
In particular,  the following equations  hold
\begin{align*}
S^{[-1]}&=-S^{[1]}, &
S^{[-2]}&=-S^{[2]}+(\mathfrak a_{-}S^{[1]})S^{[1]},
\end{align*}
and matrices $\tilde{S}^{[-1]}$ and $\tilde{S}^{[-2]}$ fulfill analogous relations.

In  what follows we will use  lowering and rising shift  matrices given by 
\begin{align*}
	\mathfrak a_{-}\diag{(m_0,m_1,\dots)}&=\diag{(m_1,m_2,\dots)}, & \mathfrak a_{+}\diag{(m_0,m_1,\dots)}&=\diag{(0,m_0,\dots)}.
\end{align*}
Any diagonal matrix $D$ fulfills the following algebraic properties:
\begin{align*}
	\Lambda D&=(	\mathfrak a_{-}D)\Lambda,&\ D\Lambda&=\Lambda 	\mathfrak a_{+}D, & D\Lambda^\intercal&=\Lambda^\intercal 	\mathfrak a_{-}D, & \Lambda^\intercal D&=	\mathfrak a_{+}D\Lambda^\intercal. \end{align*}

Multiple orthogonal polynomials of type II and I on the step-line are given as entries of certain vector of polynomials.  Type II multiple orthogonal polynomial $B_n$ are the entries  of the  vector $B$, while type I  multiple orthogonal polynomials $A_n^{(a)}$, $a\in\{1,2\}$,  are the entries of the vectors  $A^{(a)}$ where
\begin{align*}
	B&\coloneq SX, & A^{(a)}&\coloneq H^{-1}\tilde{S}X^{(a)},  & a&\in\{1,2\}.
\end{align*}
The degrees of these polynomials are easily seen to be 
\begin{align*}%\label{eq:degrees}
	\deg B_{n} &=n , 	&\deg A^{(a)}_{n} &=\left\lceil\frac{n+2-a}{2} \right\rceil-1, & a&\in\{1,2\},
\end{align*}
here $\lceil x\rceil$ is the the smallest integer that is greater than or equal to $x$.
%The linear form is $ Q\coloneq H^{-1}\tilde{S}\varepsilon=A^{(1)}w^{(1)}+A^{(2)}w^{(2)}$.
	The following multiple orthogonality relations are satisfied
\begin{align*}
	\sum_{a=1}^2 \sum_{k=0}^\infty A^{(a)}_{n} (k) w^{(a)} (k)k^m &=0, &k &\in\left\{ 0 ,\dots, \deg B_{n-1}\right\}, \\
	\sum_{k=0}^\infty B_{n} (k) w^{(a)}(k) k^m &=0, &k &\in\left\{0 ,\dots, \deg A^{(a)}_{n-1}\right\},& a&\in\{1,2\}.
\end{align*}
According to \cite[Proposition 7]{afm} the type II polynomials can be expressed as the following quotient             of determinants
\begin{align}\label{eq:determinant_B}
	B_n&=\frac{1}{\tau_n}\begin{vNiceArray}{cw{c}{1cm}c|c}[margin] \Block{3-3}<\Large>{\mathscr M^{[n]}} & & &1 \\
		 & & & \Vdots \\
		&&&x^{n-1}\\
		\hline
		\mathscr{M}_{n,0} & \Cdots& \mathscr M_{n,n-1} & x^n
	\end{vNiceArray}, &n\in \N_0.
\end{align}
Hence, for the coefficients $p^j_n$ of the type II multiple orthogonal polynomials
\begin{align*}
	B_n=x^n+p^1_nx^{n-1}+\dots+p^{n-1}_n x+p_n^n
\end{align*}
we get determinantal expressions. For that aim, let us consider the  \emph{associated $\tau$-functions} $\tau^j_n$, $j\in\{1,\dots,n-1\}$, as the determinants of the matrix $\mathscr M^{[n,j]}$, $j\in\{1,\dots,n-1\}$, obtained from $\mathscr M^{[n]}$ by removing the $(n-j)$-th row $\begin{bNiceMatrix}
	\mathscr M_{n-j,0}&\Cdots & \mathscr M_{n-j,n-1}
\end{bNiceMatrix}$ and adding  as  last row the row $\begin{bNiceMatrix}
\mathscr M_{n,0}&\Cdots & \mathscr M_{n,n-1}
\end{bNiceMatrix}$. Then, by expanding the determinant in the RHS of \eqref{eq:determinant_B} by the last column we get
\begin{align}
	p^j_n=(-1)^j\frac{\tau^j_n}{\tau_n}.
\end{align}

Similarly \cite{afm} we have
\begin{align}\label{eq:determinant_A}
	A^{(a)}_n&=
	\frac{1}{\tau_{n+1}}
	{\NiceMatrixOptions{cell-space-limits = 2pt}\begin{vNiceArray}{cw{c}{1cm}c|c}[margin] \Block{3-3}<\Large>{\mathscr M^{[n]}} & & &	\mathscr{M}_{0,n} \\
			& & & \Vdots \\
			&&& \mathscr M_{n-1,n}\\
			\hline
			X^{(a)}_0 & \Cdots&X^{(a)}_{n-1}& X^{(a)}_n
		\end{vNiceArray}}, &n\in \N_0.
\end{align}

The Hessenberg matrix  $T\coloneq S\Lambda S^{-1}$ and its transposed matrix is $T^{\intercal}%=\tilde{J}^2=J_1+J_2
=H^{-1}\tilde{S}\Lambda^2 \tilde{S}^{-1}H$ give the 4-term recursion relations related to these multiple orthogonal polynomials.
Indeed, the following recurrence relations hold
\begin{align}\label{eq:recursion}
	TB&=zB, &T^\intercal A^{(a)}&=zA^{(a)}, &a&\in\{1,2\}.
\end{align}
Notice that  $T$ is a  tetradiagonal Hessenberg matrix, i.e.,
\begin{align}\label{eq:T_rec}
	T=(\Lambda^\intercal)^2\gamma+\Lambda^\intercal \beta+\alpha+\Lambda,
\end{align}
where $\alpha$, $\beta$, and $\gamma$ are the following diagonal matrices 
\begin{align*}
	\alpha&=\diag{(\alpha_0,\alpha_1,\dots)},& \beta&=\diag{(\beta_1,\beta_2,\dots)},&\gamma&=\diag{(\gamma_2,\gamma_3,\dots)}.
\end{align*}

The diagonal matrices $\alpha$, $\beta$, and $\gamma$ are related with $S$, $\tilde{S}$ and $H$ matrices by the following equations:
\begin{align}\label{eq:alphabetagamma}
\left\{
\begin{aligned}
		\alpha&=S^{[-1]}+\mathfrak a_{+}S^{[1]}=\mathfrak a_{+}S^{[1]}-S^{[1]}=\mathfrak a_{+}^2\tilde{S}^{[2]}-\tilde{S}^{[2]}+\tilde{S}^{[1]}(\mathfrak a_{-}\tilde{S}^{[1]}-\mathfrak a_{+}\tilde{S}^{[1]}),\\\beta&=H^{-1}\mathfrak a_{-}H(\mathfrak a_{+}\tilde{S}^{[1]}+\mathfrak a_{-}\tilde{S}^{[-1]})=H^{-1}\mathfrak a_{-}H(\mathfrak a_{+}\tilde{S}^{[1]}-\mathfrak a_{-}\tilde{S}^{[1]})=\mathfrak a_{+}S^{[2]}-S^{[2]}-S^{[1]}\mathfrak a_{-}\alpha,\\\gamma&=H^{-1}\mathfrak a^2_{-}H.
\end{aligned}\right.
\end{align}

We introduce the  following matrices $T^{(1)}$ and $T^{(2)}$ 
\begin{align*}T^{(a)}&\coloneq H^{-1}\tilde{S}\Lambda^{(a)}\tilde{S}^{-1}H=T^\intercal M^{(a)},&
 	M^{(a)}&\coloneq H^{-1}\tilde{S}I^{(a)}\tilde{S}^{-1}H,&a&\in\{1,2\}.
 \end{align*}
These matrices satisfy 
\begin{align*}
	T^{(a)} A^{(b)}&=\delta_{a,b} zA^{(b)}, & a,b\in\{1,2\}.
	%	T^{(1)}A^{(1)}&=zA^{(1)}, &T^{(2)}A^{(2)}&=zA^{(2)}, & T^{(1)}A^{(2)}&=0, & T^{(2)}A^{(1)}&=0.
\end{align*}
as well as $T^\intercal=T^{(1)}+T^{(2)}$. Hence, $T^{(a)}$ are matrices with all its superdiagonals but for the first and the second have zero entries.
%Moreover, we notice  that  $\hat T\coloneq H^{-1}\tilde{S}\Lambda\tilde{S}^{-1}H$ is such that $T^\top=\hat T^2$ and
%\begin{align*}
%	 \hat{T}A^{(1)}&=zA^{(2)},& \hat{T} A^{(2)}&=A^{(1)}.
%\end{align*}

%\subsection{Pascal matrices}
The Pascal matrices $L=(L_{n,m})$ have entries given by 
\begin{align*} 
L_{n,m}&=\begin{cases} \binom{n}{m},& n \geq m, \\ 0,& n<m,  \end{cases}
 \end{align*}
while  its inverse $L^{-1}=(\tilde L_{n,m})$ have entries 
\begin{align*}
\tilde L_{n,m}=\begin{cases} {}(-1)^{n+m} \binom{n}{m},& n \geq m, \\ 0&n<m.\end{cases}
\end{align*}
% it is:
%
%\begin{equation*}B=\begin{pmatrix}1&0&0&0&\dots\\1&1&0&0&\dots\\1&2&1&0&\dots\\\vdots&\ddots&\ddots&\ddots&\ddots\end{pmatrix}\end{equation*}
These matrices act on the monomial vectors as follows
\begin{align*}
	X(z+1)&=LX(z),&X(z-1)&=L^{-1}X(z).
\end{align*}
and can be  expanded as 
\begin{align*}
	L^{\pm1}&=I\pm\Lambda^{\intercal}D+(\Lambda^\intercal)^2D^{[2]}+\cdots
\end{align*}
with $D\coloneq\diag(1,2,3,\dots)$ and $D^{[n]}\coloneq \frac{1}{n}\diag(n^{(n)},(n+1)^{(n)},\dots)$
where $x^{(n)}=x(x-1)\cdots (x-n+1)$.
In terms of   Pascal matrices we  define the dressed Pascal  matrices 
\begin{align*}
	\Pi&\coloneq SLS^{-1},& \Pi^{-1}&\coloneq SL^{-1}S^{-1}.
\end{align*}
For type II  multiple orthogonal polynomials we find the following properties:
\begin{align*}
	B(z+1)&=\Pi B(z), &B(z-1)&=\Pi^{-1}B(z).
\end{align*}

We can proceed similarly for the type I multiple  orthogonal polynomials. We introduce the matrices $L^{\pm(a)}$, $a\in\{1,2\}$, with nonzero entries 
\begin{align*}
L^{\pm(a)}_{2n+a-1,2m+a-1}&=(\pm 1)^{n+m}\binom{n}{m}, & a&\in\{1,2\}, & n,m&\in\N_0, & n&\geq m,
\end{align*}
and zero for any other couple sub-indexes.
We will also use $L^{(a)}=L^{+(a)}$.
These matrices satisfy
\begin{align*}
	L^{(a)}	L^{-(a)}&=I^{(a)}, & a&\in\{1,2\}.
\end{align*}
The partial Pascal matrices can be expressed as band matrices, but in this case the even or odd diagonals are null, i.e.,
\begin{align*}
	L^{\pm(a)}=I^{(a)}\pm(\Lambda^\intercal)^2D^{(a),[1]}+(\Lambda^\intercal)^4D^{(a),[2]}+\cdots,
\end{align*}
where we have 
\begin{align*}
	D^{(1),[n]}_1&=\frac{1}{n}\diag (n^{(n)},0,(n+1)^{(n)},0,\dots), &D^{(2),[n]}&=\frac{1}{n}\diag (0,n^{(n)},0(n+1)^{(n)},\dots).
\end{align*}
Analogously, dressed Pascal matrices are  defined by
\begin{align*}\Pi^{\pm(a)}\coloneq H^{-1}\tilde{S}L^{\pm(a)}\tilde{S}^{-1}H,
 \end{align*}
and the following relations are satisfied
\begin{align*}
	A^{(a)}(z\pm 1)&=\Pi^{\pm (a)}A^{(a)}(z). 
\end{align*}

%We also consider corresponding discrete Cauchy transforms or second kind functions:
%\begin{align*}
%	D^{(a)}(z)&\coloneq \sum_{k=0}^\infty\frac{B(k)}{z-k}w^{(a)}(k), & a&\in\{1,2\}, &
%C(z)&\coloneq\sum_{k=0}^\infty\frac{w^{(1)}(k)A^{(1)}(k)+w^{(2)}(k)A^{(2)}(k)}{z-k}.
%\end{align*}
%That are, by definition, meromorphic functions with simple poles at $\N_0$.
%We  use the notation $X_*(x)\coloneq x^{-1}X(x^{-1})$ and $X^{(a)}_*(x)\coloneq x^{-1}X^{(a)}(x^{-1})$.
%Taking into account that 
%\begin{align*}
%	\frac{1}{z-k}&=\sum_{n=0}^\infty\frac{k^n}{z^{n+1}}=\big(X(k)\big)^\top X_*(z)=\big(X^{(a)}(k)\big)^\top X_*^{(a)}(z), & |z|&>k, & a&\in\{1,2\},
%\end{align*}
%we get the following asymptotic expressions
%\begin{align*}
%	D^{(a)}(z)&=H\tilde S^{-\top}X_*^{(a)}(z). & a&\in\{1,2\}, &
%C(z)&=S^{-\top}X_*(z),& z&\to\infty.
%\end{align*}
%Indeed, for $z\to\infty$ we have
%\begin{align*}
%	D^{(a)}(z)&=S\sum_{k=0}^\infty X(k)w^{(a)}(k)\big(X^{(a)}(k)\big)^\top X_*^{(a)}(z)=S\mathscr MX_*^{(a)}(z)=H\tilde S^{-\top}X_*^{(a)}(z), \\
%	C(z)&=H^{-1} \tilde S\sum_{k=0}^\infty (w^{(1)}(k)X^{(1)}(k)+w^{(2)}(k)X^{(2)}(k))\big(X(k)\big)^\top X_*(z)=\tilde S^{-\top}X_*(z).
%\end{align*}

\section{Discrete Multiple Orthogonal Polynomials and  Pearson equations}
\subsection{Pearson equation}
Let us consider that   weights $w^{(1)},w^{(2)}$ 
 subject to the following discrete  Pearson equations
\begin{align} \label{eq:Pearson1} 
	\theta(k+1) w^{(a)}(k+1)&=\sigma^{(a)}(k)w^{(a)}(k), & a&\in\{1,2\}.
	%\\  \label{eq:Pearson2}\theta(k+1) w^{(2)}(k+1)=\sigma^{(2)}(k)w^{(2)}(k),
\end{align}
where $\theta$, $\sigma^{(1)}$ and $\sigma^{(2)}$ are polynomials, and $\theta(0)=0$.
An important result regarding these type of weights is:
\begin{teo}
Let us assume that the weights $w^{(1)},w^{(2)}$ solve the Pearson equations \eqref{eq:Pearson1}. Then,  the corresponding moment matrix $\mathscr M$ satisfy the following matrix equation
\begin{align}
	 \label{eq:simhip}\theta(\Lambda) 
	 \mathscr M=L \mathscr M\left(L^{(1)}\sigma^{(1)}(\Lambda^{(1)})+L^{(2)}\sigma^{(2)}(\Lambda^{(2)})\right)^\intercal.
  \end{align}
\end{teo}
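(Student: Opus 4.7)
The strategy is to produce the identity for each weight separately and then glue the two contributions together using the block structure of $I^{(a)}$, $L^{(a)}$ and $\Lambda^{(a)}$. Define the \emph{partial} moment matrices
\begin{align*}
\mathscr M^{(a)}\coloneq \sum_{k=0}^{\infty} X(k)\,X^{(a)}(k)^\intercal\,w^{(a)}(k),\qquad a\in\{1,2\},
\end{align*}
so that $\mathscr M=\mathscr M^{(1)}+\mathscr M^{(2)}$. Since $X^{(1)}$ has zero odd entries and $X^{(2)}$ has zero even entries, we read off $\mathscr M^{(a)}=\mathscr M I^{(a)}$, which will be used later.

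First, I would apply $\theta(\Lambda)$ on the left to $\mathscr M^{(a)}$. Using $\Lambda X(k)=k X(k)$ we get $\theta(\Lambda)\mathscr M^{(a)}=\sum_{k\geq 0}\theta(k) X(k) X^{(a)}(k)^\intercal w^{(a)}(k)$. Because $\theta(0)=0$, the $k=0$ term drops out, and shifting the summation index $k\mapsto k+1$ yields sums involving $\theta(k+1)w^{(a)}(k+1)$. The Pearson equation \eqref{eq:Pearson1} replaces this by $\sigma^{(a)}(k)w^{(a)}(k)$. Next I use $X(k+1)=LX(k)$ on the left factor, and on the right factor the key manipulation
\begin{align*}
\sigma^{(a)}(k)X^{(a)}(k+1) \;=\; \sigma^{(a)}(k)L^{(a)}X^{(a)}(k)\;=\;L^{(a)}\sigma^{(a)}(\Lambda^{(a)})X^{(a)}(k),
\end{align*}
which combines $X^{(a)}(k+1)=L^{(a)}X^{(a)}(k)$ with the eigenrelation $\sigma^{(a)}(\Lambda^{(a)})X^{(a)}(k)=\sigma^{(a)}(k)X^{(a)}(k)$. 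Taking transposes and pulling constant matrices out of the sum produces the intermediate identity
\begin{align*}
\theta(\Lambda)\mathscr M^{(a)}\;=\;L\,\mathscr M^{(a)}\,\bigl(L^{(a)}\sigma^{(a)}(\Lambda^{(a)})\bigr)^\intercal,\qquad a\in\{1,2\}.
\end{align*}

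The final and most delicate step is to assemble the two partial identities into the global one. Summing over $a$ gives
\begin{align*}
\theta(\Lambda)\mathscr M=L\bigl[\mathscr M^{(1)}(L^{(1)}\sigma^{(1)}(\Lambda^{(1)}))^\intercal+\mathscr M^{(2)}(L^{(2)}\sigma^{(2)}(\Lambda^{(2)}))^\intercal\bigr],
\end{align*}
and to reach the stated form one needs $\mathscr M^{(b)}(L^{(a)}\sigma^{(a)}(\Lambda^{(a)}))^\intercal=0$ whenever $a\neq b$. This is where the parity block structure enters: the matrices $L^{(a)}$ and $\Lambda^{(a)}=\Lambda^2 I^{(a)}$ both satisfy $L^{(a)}=L^{(a)}I^{(a)}$ and $\Lambda^{(a)}=\Lambda^{(a)}I^{(a)}$, so by induction on the degree $L^{(a)}\sigma^{(a)}(\Lambda^{(a)})=L^{(a)}\sigma^{(a)}(\Lambda^{(a)})I^{(a)}$, and hence its transpose is annihilated from the left by $I^{(b)}$ for $b\neq a$. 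Using $\mathscr M^{(b)}=\mathscr M I^{(b)}$, this orthogonality between the two parity sectors gives the vanishing cross terms, which allows replacing each $\mathscr M^{(a)}$ on the right by the full $\mathscr M$ and leads to \eqref{eq:simhip}.

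The main obstacle I expect is the last paragraph: one has to be careful that the constant term $\sigma^{(a)}_0 I$ inside $\sigma^{(a)}(\Lambda^{(a)})$ does not spoil the block-parity argument. The clean way around it is to absorb the $I^{(a)}$ factor through $L^{(a)}$ on the left (which already has the correct parity in rows and columns), rather than through $\sigma^{(a)}(\Lambda^{(a)})$ directly; once this is observed, the cross-vanishing $I^{(b)}(L^{(a)}\sigma^{(a)}(\Lambda^{(a)}))^\intercal=0$ is immediate and the rest is bookkeeping.
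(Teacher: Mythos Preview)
Your proof is correct and follows essentially the same route as the paper's: apply $\theta(\Lambda)$ using $\theta(0)=0$, shift the index, invoke the Pearson equation, then use $X(k+1)=LX(k)$, $X^{(a)}(k+1)=L^{(a)}X^{(a)}(k)$ and the eigenrelation for $\Lambda^{(a)}$ to factor the matrix operators out of the sum. The only organizational difference is that you split $\mathscr M=\mathscr M^{(1)}+\mathscr M^{(2)}$ and reassemble at the end, whereas the paper keeps both weights together throughout; in doing so the paper passes over the cross-term vanishing $I^{(b)}\bigl(L^{(a)}\sigma^{(a)}(\Lambda^{(a)})\bigr)^\intercal=0$ without comment, while you make it explicit---so your version is in fact slightly more detailed at precisely the step that deserves care.
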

\begin{proof}
For the moment matrix is  $ \mathscr M$ we find
\begin{align*}\theta(\Lambda) \mathscr M&
=\sum^{\infty}_{k=1} \theta(k)X(k)\big(w^{(1)}X^{(1)}(k)+w^{(2)}X^{(2)}(k)\big)^\intercal\\
&=\sum^{\infty}_{k=0} \theta(k+1)X(k+1)\big(w^{(1)}(k+1)X^{(1)}(k+1)+w^{(2)}(k+1)X^{(2)}(k+1)\big)^\intercal)\\
&=\sum^{\infty}_{k=0} X(k+1)\big(\sigma^{(1)}(k)w^{(1)}(k)X^{(1)}(k+1)+\sigma^{(2)}(k)w^{(2)}(k)X^{(2)}(k+1)\big)^\intercal\\
&=\sum^{\infty}_{k=0} LX(k)\big(\sigma^{(1)}(k)w^{(1)}(k)X^{(1)}(k)^\intercal {L^{(1)}}^\intercal+\sigma^{(2)}(k)w^{(2)}(k)X^{(2)}(k)^\intercal  {L^{(2)}}^\intercal\big)\\
&=\sum^{\infty}_{k=0} LX(k)\big(X^{(1)}(k)^\intercal w^{(1)}(k)+X^{(2)}(k)^\intercal w^{(2)}(k)\big)\big(\sigma^{(1)}({\Lambda^{(1)}}^\intercal){L^{(1)}}^\intercal+\sigma^{(2)}({\Lambda^{(2)}}^\intercal){L^{(2)}}^\intercal\big)\\
&=L \mathscr M[L^{(1)}\sigma^{(1)}(\Lambda^{(1)})+L^{(2)}\sigma^{(2)}(\Lambda^{(2)})]^\intercal .
\end{align*}
\end{proof}

This matrix property implies for the tetradiagonal Hessenberg recursion matrix $T$ the following 
\begin{pro} 
Let us assume that the weights $w^{(1)},w^{(2)}$ solve the Pearson equations \eqref{eq:Pearson1},  for the recursion matrix $T$  in \eqref{eq:T_rec}, the following relation  holds
\begin{align}\label{eq:T}
	\Pi^{-1}\theta(T)=\sigma^{(1)}\big({T^{(1)}}^\intercal\big){\Pi^{(1)}}^\intercal+\sigma^{(2)}\big({T^{(2)}}^\intercal\big){\Pi^{(2)}}^\intercal.\end{align}
\end{pro}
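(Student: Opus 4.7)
The plan is to substitute the Gauss--Borel factorization \eqref{eq:LU} into the matrix Pearson identity \eqref{eq:simhip} and then conjugate the resulting equation so that every bare shift matrix ($\Lambda$, $\Lambda^{(a)}$, $L$, $L^{(a)}$) is dressed into its corresponding operator ($T$, $T^{(a)}$, $\Pi$, $\Pi^{(a)}$).

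First I would plug $\mathscr M = S^{-1} H \tilde S^{-\intercal}$ into \eqref{eq:simhip}, giving
\begin{align*}
\theta(\Lambda) S^{-1} H \tilde S^{-\intercal} = L S^{-1} H \tilde S^{-\intercal} \bigl[L^{(1)}\sigma^{(1)}(\Lambda^{(1)}) + L^{(2)}\sigma^{(2)}(\Lambda^{(2)})\bigr]^\intercal,
\end{align*}
and then multiply on the left by $S$ and on the right by $\tilde S^{\intercal} H^{-1}$. Using $S\Lambda S^{-1}=T$ and $\Pi = S L S^{-1}$, the left-hand side collapses to $\theta(T) \cdot H \tilde S^{-\intercal} \tilde S^{\intercal} H^{-1} = \theta(T)$, while the right-hand side becomes
\begin{align*}
\Pi \, H \tilde S^{-\intercal} \bigl[L^{(1)}\sigma^{(1)}(\Lambda^{(1)}) + L^{(2)}\sigma^{(2)}(\Lambda^{(2)})\bigr]^\intercal \tilde S^{\intercal} H^{-1}.
\end{align*}
Left-multiplying by $\Pi^{-1}$ and distributing the transpose through the sum (and through each polynomial, using that $p(M)^\intercal = p(M^\intercal)$ for scalar-coefficient polynomials), the problem reduces to showing, for each $a\in\{1,2\}$, the identity
\begin{align*}
H \tilde S^{-\intercal}\sigma^{(a)}\bigl((\Lambda^{(a)})^\intercal\bigr)(L^{(a)})^\intercal \tilde S^{\intercal} H^{-1} = \sigma^{(a)}\bigl({T^{(a)}}^\intercal\bigr){\Pi^{(a)}}^\intercal.
\end{align*}

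The key step is then to insert the identity $I = \tilde S^{\intercal} H^{-1}\cdot H \tilde S^{-\intercal}$ between the $\sigma^{(a)}$ factor and the $(L^{(a)})^\intercal$ factor, which splits the expression into the product of
\begin{align*}
\bigl(H\tilde S^{-\intercal}(\Lambda^{(a)})^\intercal \tilde S^{\intercal} H^{-1}\bigr)\quad\text{and}\quad \bigl(H\tilde S^{-\intercal}(L^{(a)})^\intercal \tilde S^{\intercal} H^{-1}\bigr),
\end{align*}
which, by the defining formulas $T^{(a)} = H^{-1}\tilde S \Lambda^{(a)}\tilde S^{-1} H$ and $\Pi^{(a)} = H^{-1}\tilde S L^{(a)}\tilde S^{-1} H$ transposed, are precisely ${T^{(a)}}^\intercal$ and ${\Pi^{(a)}}^\intercal$. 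Since $\sigma^{(a)}$ is a scalar polynomial, the conjugation commutes with it, so $H\tilde S^{-\intercal}\sigma^{(a)}((\Lambda^{(a)})^\intercal) \tilde S^\intercal H^{-1} = \sigma^{(a)}({T^{(a)}}^\intercal)$, and this closes the proof.

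The main obstacle I anticipate is purely bookkeeping: tracking the order in which one transposes the bracketed sum, exploiting the polynomiality to move the transpose inside $\sigma^{(a)}$, and making sure the $H$ and $H^{-1}$ factors end up on the correct sides so that the conjugation by $\tilde S$ produces the right dressed objects. No analytic content is required beyond \eqref{eq:simhip}, the factorization \eqref{eq:LU}, and the explicit definitions of $T$, $T^{(a)}$, $\Pi$, and $\Pi^{(a)}$.
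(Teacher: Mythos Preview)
Your proof is correct and follows essentially the same route as the paper's: plug the Gauss--Borel factorization into the matrix Pearson identity \eqref{eq:simhip}, conjugate by $S$ on the left and $\tilde S^{\intercal}H^{-1}$ on the right to dress $\Lambda$ and $L$ into $T$ and $\Pi$, and then insert $\tilde S^{\intercal}H^{-1}\cdot H\tilde S^{-\intercal}=I$ between the $\sigma^{(a)}$ and $L^{(a)}$ factors to recognize ${T^{(a)}}^\intercal$ and ${\Pi^{(a)}}^\intercal$. Your write-up is in fact slightly more explicit than the paper's about the transposition and conjugation bookkeeping.
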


\begin{proof} 
Using Equation  \eqref{eq:simhip} and the Gauss--Borel factorization we get
\begin{align*}
	\theta(\Lambda)S^{-1}H\tilde{S}^{-\intercal}=LS^{-1}H\tilde{S}^{-\intercal}\big(\sigma^{(1)}({\Lambda^{(1)}}^\intercal){L^{(1)}}^\intercal+\sigma^{(2)}({\Lambda^{(2)}}^\intercal){L^{(2)}}^\intercal\big), 
\end{align*}
grouping $S$ on one side and $\tilde{S} $ on the other side of the equation:
\begin{align*}
	S\theta(\Lambda)S^{-1}=SLS^{-1}H\tilde{S}^\intercal(\sigma^{(1)}({\Lambda^{(1)}}^\intercal){L^{(1)}}^\intercal+
	\sigma^{(2)}(	{\Lambda^{(2)}}^\intercal){L^{(2)}}^\intercal)\tilde{S}^{\intercal}H^{-1},
\end{align*} 
it can be written as
\begin{align*} \Pi^{-1}\theta(T)=H\tilde{S}^{-\intercal}\Big(\sigma^{(1)}(T^\intercal_1)(\tilde{S}^\intercal H^{-1})(H\tilde{S}^{-\intercal}){L^{(1)}}^\intercal+\sigma^{(2)}(T^\intercal_2)(\tilde{S}^\intercal H^{-1})(H\tilde{S}^{-\intercal}){L^{(2)}}^\intercal\Big)\tilde{S}^\intercal H^{-1},
\end{align*}
and finally we get Equation \eqref{eq:T}.
\end{proof}

\subsection{Hypergeometric discrete multiple orthogonal polynomials}
Let us write  the polynomials $\theta$ and $\sigma^{(a)}$, $a\in\{1,2\}$, in  Equations \eqref{eq:Pearson1}  as follows 
\begin{align}\label{eq:Pearson_coeffcients}
	\theta(z)&=z(z+c_1-1)\cdots(z+c_N-1), & \sigma^{(a)}(z)&=\eta^{(a)}\big(z+b^{(a)}_1\big)\cdots\big(z+b^{(a)}_{M^{(a)}}\big),  & M^{(a)},N&\in\N_0,& a&\in\{1,2\}.%\\ %\sigma^{(2)}(z)&={\eta^{(2)}}(z+b_1)\dots(z+b_{M_2}). 
\end{align}
In the subsequent discussion we require of generalized hypergeometric series, see \cite{LibrodeKF,LibrodeHypergeom},
%\begin{align*}
%	_MF_N(a_1,\dots,a_M;c_1,\dots,c_N;\eta)\coloneq \sum^{\infty}_{k=0}\frac{(a_1)_k\dots(a_M)_k}{(c_1+1)_k\dots(c_N+1)_k}\frac{\eta^k}{k!}
%\end{align*}
\begin{align*}
%	\label{Hypergeom}
	\pFq{M}{N}{b_1,\dots,b_M}{c_1,\dots,c_N}{\eta}\coloneq\sum_{k=0}^{\infty}\dfrac{(b_1)_k\cdots(b_M)_k}{(c_1)_k\cdots(c_N)_k}\dfrac{\eta^k}{k!},
\end{align*}
where we use the Pochhammer symbol $(b)_k\coloneq b(b+1)\cdots(b+k-1)$ and $(b)_0=1$,

Pearson equations \eqref{eq:Pearson1} determine uniquely the weights $w^{(1)},w^{(2)}$, up to multiplicative constant, to be
\begin{align}\label{eq:the_weights}
w^{(a)}(k)&=\dfrac{\big(b^{(a)}_1\big)_k\cdots\Big(b^{(a)}_{M^{(a)}}\Big)_k}{(c_1)_k\cdots(c_N)_k}\dfrac{{(\eta^{(a)})}^k}{k!}, & a&\in\{1,2\}.
\end{align}
In what follows we will  use logarithmic derivatives
\begin{align*}
	\vartheta^{(a)}&\coloneq\eta^{(a)}\frac{\partial\quad\quad}{\partial\eta^{(a)}}, & a&\in\{1,2\},
\end{align*}
as well as
\begin{align*}
	\vartheta&\coloneq\vartheta^{(1)}+\vartheta^{(2)}, & \tilde \vartheta&\coloneq\vartheta^{(1)}-\vartheta^{(2)}.
\end{align*}We introduce
$  t^{(a)}\coloneq\log{\eta^{(a)}}$, $a\in\{1,2\}$ and
\begin{align*}
t&\coloneq \frac{t^{(1)}+t^{(2)}}{2}, & \tilde t&\coloneq \frac{t^{(1)}-t^{(2)}}{2},
\end{align*}
so that $\eta^{(1)}=\Exp{t+\tilde t}$ and $\eta^{(2)}=\Exp{t-\tilde t}$ 
\begin{lemma}\label{lem:theta-rho}
	For the differential operators $\vartheta$ and $\tilde\vartheta$ we find
	\begin{align*}
		\vartheta&=\frac{\partial}{\partial t},&\tilde{\vartheta}=\frac{\partial}{\partial \tilde{t}}.
	\end{align*}
\end{lemma}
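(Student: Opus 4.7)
The plan is to reduce the statement to an application of the multivariable chain rule, after rewriting the logarithmic Euler operators $\vartheta^{(a)}$ as ordinary partial derivatives with respect to $t^{(a)}$.

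First I would observe that, since $t^{(a)} = \log \eta^{(a)}$, one has $\partial/\partial t^{(a)} = \eta^{(a)}\,\partial/\partial \eta^{(a)}$, so $\vartheta^{(a)} = \partial/\partial t^{(a)}$ for $a\in\{1,2\}$. This step is essentially a definition-unpacking.

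Next I would view $(t,\tilde t)$ and $(t^{(1)}, t^{(2)})$ as two sets of coordinates related by the linear change $t^{(1)} = t + \tilde t$, $t^{(2)} = t - \tilde t$, which is the relation given just before the lemma. Applying the chain rule yields
\begin{align*}
\frac{\partial}{\partial t^{(1)}} &= \frac12\Big(\frac{\partial}{\partial t} + \frac{\partial}{\partial \tilde t}\Big), & \frac{\partial}{\partial t^{(2)}} &= \frac12\Big(\frac{\partial}{\partial t} - \frac{\partial}{\partial \tilde t}\Big).
\end{align*}
Adding and subtracting these two identities gives
\begin{align*}
\vartheta &= \vartheta^{(1)} + \vartheta^{(2)} = \frac{\partial}{\partial t^{(1)}} + \frac{\partial}{\partial t^{(2)}} = \frac{\partial}{\partial t}, &
\tilde\vartheta &= \vartheta^{(1)} - \vartheta^{(2)} = \frac{\partial}{\partial t^{(1)}} - \frac{\partial}{\partial t^{(2)}} = \frac{\partial}{\partial \tilde t},
\end{align*}
which is precisely the claim.

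There is really no main obstacle here: the lemma is a bookkeeping identity that records how the two natural derivations $(\vartheta,\tilde\vartheta)$ correspond, under $\eta^{(a)} = \Exp{\pm}$, to the rotated coordinates $(t,\tilde t)$. The only thing to be slightly careful about is that the factors of $1/2$ in the definitions of $t$ and $\tilde t$ match the factors of $2$ appearing in $t^{(1)} \pm t^{(2)} = 2t$ or $2\tilde t$, so that no spurious constants appear in the final identities.
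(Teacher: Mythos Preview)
Your proof is correct and follows the same approach as the paper: both arguments reduce to the chain rule for the linear change of variables $t^{(1)}=t+\tilde t$, $t^{(2)}=t-\tilde t$, together with the identification $\vartheta^{(a)}=\partial/\partial t^{(a)}$. The paper's version is slightly more direct, applying the chain rule to $\partial/\partial t$ and $\partial/\partial\tilde t$ rather than to $\partial/\partial t^{(a)}$ and then adding and subtracting, but the content is identical.
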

\begin{proof}
It follows from
	\begin{align*}
		\frac{\partial}{\partial t}&=\frac{\partial}{\partial t^{(1)}}+\frac{\partial}{\partial t^{(2)}}=\vartheta, &
		\frac{\partial}{\partial \tilde{t}}&=\frac{\partial}{\partial t^{(1)}}-\frac{\partial}{\partial t^{(2)}}=\tilde{\vartheta}. 
	\end{align*}
\end{proof}

An open question is whether the system of weights $\{w^{(1)},w^{(2)}\}$ is  perfect. That is the case if we are dealing with AT systems, see \cite{Ismail}. 
Following \cite[Examples 2.1 and 2.2]{Arvesu} we can give two families satisfying this condition:
\begin{lemma}[Perfect \& AT systems]\label{lemma:ATSystems}
	The system of weights  $\{w^{(1)},w^{(2)}\}$ with:
	\begin{enumerate}
	\item  
	\begin{align*}
		w^{(a)}(k)&=
		\dfrac{\big(b_1\big)_k\cdots\big(b_M\big)_k}{(c_1)_k\cdots(c_N)_k}
			\dfrac{(\eta^{(a)})^k}{k!}, & a&\in\{1,2\},
	\end{align*}
where $\eta^{(a)},b_i,c_j>0$, and
	\item 
\begin{align*}
	w^{(a)}(k)&=\dfrac{\big(b_1\big)_k\cdots\big(b_{M-1}\big)_k(b^{(a)})_k}{(c_1)_k\cdots(c_N)_k}\dfrac{\eta^k}{k!}, & a&\in\{1,2\},
	\end{align*}
	with $\eta,b_i,c_j,b^{(a)}>0$ 
\end{enumerate}
 are AT systems and consequently a perfect systems.
\end{lemma}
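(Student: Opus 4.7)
The plan is to reduce the lemma to verifying the AT (Algebraic Chebyshev) property for each family, since a standard result in multiple orthogonality theory guarantees that AT systems are perfect (see for instance \cite{Ismail}). Concretely, for a multi-index $(n_1, n_2)$ and any polynomials $q_1, q_2$ with $\deg q_a \le n_a - 1$, I must show that the linear combination $q_1(k) w^{(1)}(k) + q_2(k) w^{(2)}(k)$ has at most $n_1 + n_2 - 1$ zeros on $\N_0$ unless it vanishes identically. The arguments for each family then reduce to a zero-counting statement for a structurally simpler linear combination obtained by factoring out the common positive prefactor.

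For part (i), I would factor out the common Pochhammer prefactor $\dfrac{(b_1)_k \cdots (b_M)_k}{(c_1)_k \cdots (c_N)_k\, k!}$, which is strictly positive for every $k\in\N_0$ by the positivity of the parameters. The zero set on $\N_0$ then coincides with that of the exponential polynomial $q_1(k)(\eta^{(1)})^k + q_2(k)(\eta^{(2)})^k$, and (assuming implicitly $\eta^{(1)} \ne \eta^{(2)}$, otherwise the two weights coincide and AT obviously fails) the classical Rolle-type bound on real zeros of exponential polynomials with distinct positive bases yields at most $\deg q_1 + \deg q_2 + 1 \le n_1 + n_2 - 1$ real zeros, which is exactly the AT bound.

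For part (ii), the distinguishing factor between the two weights is the single Pochhammer symbol $(b^{(a)})_k$, so after dividing out the common positive prefactor the relevant zero set is that of $q_1(k)(b^{(1)})_k + q_2(k)(b^{(2)})_k$. I would pass to a continuous extension via $(b)_k = \Gamma(b+k)/\Gamma(b)$, obtaining an analytic function on $(-\min\{b^{(1)},b^{(2)}\},\infty)$, divide by $\Gamma(b^{(2)}+x)$, and apply Rolle's theorem iteratively: the ratio $\Gamma(b^{(1)}+x)/\Gamma(b^{(2)}+x)$ is strictly monotonic for $b^{(1)} \ne b^{(2)}$, which drives an inductive reduction of the polynomial degrees summand by summand, again bounding the total count of zeros by $n_1 + n_2 - 1$. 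This is the line of argument used in Arvesu's Examples 2.1 and 2.2 cited in the statement, which I would transcribe and verify in the present two-weight setting.

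The main obstacle is the continuous Rolle-type induction in part (ii): one must track carefully that each differentiation applied after the Gamma division decreases a polynomial degree by one while preserving the splitting into the two summand classes, so that the degree budget controls the total number of sign changes, and one must also justify that the bound on real zeros descends to a bound on zeros in $\N_0$. Once AT is established in both cases, the implication to perfectness is classical and requires no further work.
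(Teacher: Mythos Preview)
Your proposal is correct and takes essentially the same approach as the paper: the paper's proof consists entirely of the two-line citation ``This fits in \cite[Examples 2.1]{Arvesu}'' and ``This fits in \cite[Examples 2.2]{Arvesu}'', and you are proposing to reproduce precisely those arguments (exponential-polynomial zero counting for case~(i), the Gamma-ratio Rolle induction for case~(ii)), which you yourself identify as Arvesu's Examples~2.1 and~2.2. Your sketch is more detailed than the paper's bare citation, but the underlying route is identical.
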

\begin{proof}
\begin{enumerate}
	\item  This fit  in \cite[Examples 2.1]{Arvesu}.
	\item  This fit  in \cite[Examples 2.2]{Arvesu}.
\end{enumerate}
\end{proof}

\begin{pro}
In terms of generalized hypergeometric functions,  the moment matrix can be written as a block matrix
{\NiceMatrixOptions{cell-space-limits = 3pt}\begin{align}\label{eq:matmomhyp}
	\mathscr M=\begin{bNiceArray}{cc|cc|cc|cw{c}{1cm}}[margin]\rho^{(1)}_0& \rho^{(2)}_0 & \cellcolor{Gray!10}\rho^{(1)}_1 & \cellcolor{Gray!10}\rho^{(2)}_1& \cellcolor{Gray!20}\rho^{(1)}_2 & \cellcolor{Gray!20}\rho^{(2)}_2 &\phantom{t}&\Cdots[shorten-start=-10pt,shorten-end=-2pt]
		 \\  \hline\cellcolor{Gray!10}\rho^{(1)}_1&\cellcolor{Gray!10}\rho^{(2)}_1 & \cellcolor{Gray!20}\rho^{(1)}_2 & \cellcolor{Gray!20}\rho^{(2)}_2& \rho^{(1)}_3 & \rho^{(2)}_3 &\phantom{t}&\Cdots[shorten-start=-10pt,shorten-end=-2pt] 
		 \\\hline
		 \cellcolor{Gray!20}\rho^{(1)}_2& \cellcolor{Gray!20}\rho^{(2)}_2 & \rho^{(1)}_3 & \rho^{(2)}_3& \rho^{(1)}_4 & \rho^{(2)}_4 &\phantom{t}&\Cdots[shorten-start=-10pt,shorten-end=-2pt] 
		 \\\hline
		 \Vdots[shorten-start=10pt,shorten-end=7pt]  &\Vdots[shorten-start=10pt,shorten-end=7pt] & \Vdots[shorten-start=10pt,shorten-end=7pt] & \Vdots[shorten-start=10pt,shorten-end=7pt]  & \Vdots[shorten-start=10pt,shorten-end=7pt]  &\Vdots[shorten-start=10pt,shorten-end=7pt]  &  \end{bNiceArray},
\end{align}}
where  the moments are expressed in terms of generalized hypergeometric series as follows
\begin{align}	\label{eq:hyp1}
	\rho^{(a)}_n&=({\vartheta^{(a)}})^n\rho^{(a)}_0,  &\rho^{(a)}_0&\coloneq	\pFq{M_a}{N}{b^{(a)}_1,\dots,b^{(a)}_{M^{(a)}}}{c_1,\dots,c_N}{\eta^{(a)}},& a&\in\{1,2\}, & n&\in\N_0.
\end{align}
\end{pro}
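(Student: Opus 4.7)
The plan is to verify the claim by direct computation from the definition of the moment matrix, and then recognize the relevant series as hypergeometric.

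First I would unpack the block structure. By the definition of the moment matrix together with the explicit forms of $X$, $X^{(1)}$, $X^{(2)}$, the entry $\mathscr M_{n,m}$ equals $\sum_{k=0}^\infty k^{n+j} w^{(1)}(k)$ when $m=2j$ is even and $\sum_{k=0}^\infty k^{n+j} w^{(2)}(k)$ when $m=2j+1$ is odd. Introducing the moments $\rho^{(a)}_m \coloneq \sum_{k=0}^\infty k^m w^{(a)}(k)$ for $a\in\{1,2\}$, this immediately gives $\mathscr M_{n,2j}=\rho^{(1)}_{n+j}$ and $\mathscr M_{n,2j+1}=\rho^{(2)}_{n+j}$, which is precisely the block arrangement displayed in \eqref{eq:matmomhyp} (the coloring just tracks the anti-diagonals $n+j=\text{const}$, reflecting the bi-Hankel identity $\Lambda\mathscr M=\mathscr M(\Lambda^\intercal)^2$ already noted earlier).

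Second, I would identify $\rho^{(a)}_0$ with a generalized hypergeometric series. Inserting the explicit expression \eqref{eq:the_weights} for $w^{(a)}$ into $\rho^{(a)}_0=\sum_{k=0}^\infty w^{(a)}(k)$ gives, by definition of ${}_{M^{(a)}}F_N$,
\begin{align*}
\rho^{(a)}_0=\sum_{k=0}^\infty \frac{(b^{(a)}_1)_k\cdots(b^{(a)}_{M^{(a)}})_k}{(c_1)_k\cdots(c_N)_k}\frac{(\eta^{(a)})^k}{k!}=\pFq{M_a}{N}{b^{(a)}_1,\dots,b^{(a)}_{M^{(a)}}}{c_1,\dots,c_N}{\eta^{(a)}}.
\end{align*}

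Third, I would relate the higher moments to $\rho^{(a)}_0$ via the logarithmic derivative $\vartheta^{(a)}=\eta^{(a)}\partial/\partial\eta^{(a)}$. Since $\eta^{(a)}$ enters $w^{(a)}(k)$ only through the factor $(\eta^{(a)})^k$, one has $\vartheta^{(a)}w^{(a)}(k)=k\,w^{(a)}(k)$, and by induction $(\vartheta^{(a)})^n w^{(a)}(k)=k^n w^{(a)}(k)$. Applying $(\vartheta^{(a)})^n$ term by term to the series for $\rho^{(a)}_0$ — justified on the open disk of convergence (and extended formally in the case of divergent series, where the identity is one of formal power series in $\eta^{(a)}$) — yields $\rho^{(a)}_n=(\vartheta^{(a)})^n \rho^{(a)}_0$, which is \eqref{eq:hyp1}.

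There is essentially no hard obstacle: the proof is a direct unwinding of definitions. The only technical point worth a brief remark is the interchange of $(\vartheta^{(a)})^n$ with the infinite sum defining $\rho^{(a)}_0$, which follows from uniform convergence of the hypergeometric series on compact subsets of its disk of convergence (with the usual convention of treating purely formal series termwise when the parameters put us outside that disk).
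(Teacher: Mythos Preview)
Your proof is correct and follows essentially the same approach as the paper's: the block structure \eqref{eq:matmomhyp} is read off directly from the definition of $\mathscr M$ together with the explicit entries of $X$, $X^{(1)}$, $X^{(2)}$, and the hypergeometric identification \eqref{eq:hyp1} is obtained from the explicit weights \eqref{eq:the_weights} and the relation $\vartheta^{(a)}(\eta^{(a)})^k=k(\eta^{(a)})^k$. The only difference is that the paper's proof is terser and defers the hypergeometric computation to \cite{Manas_Fernandez-Irisarri}, whereas you spell it out in full.
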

\begin{proof} It follows from the definition of the moment matrix $\mathscr M$.
Equation \eqref{eq:hyp1} determining the moments in terms of two families of generalized hypergeometric series   follows from Equation \eqref{eq:matmomhyp} and \cite{Manas_Fernandez-Irisarri}.\end{proof}

Given a function $f=f(\eta)$ we consider the covector  $\delta_n(f)\coloneq\begin{bNiceMatrix}
	f &\vartheta f&\Cdots&\vartheta^{n-1} f
\end{bNiceMatrix}$.
For  two functions $f_1(\eta^{(1)})$  we consider the double Wronskian of the covectors $\delta_n(f_1)$ and $\delta_n(f_2)$ given by
{\NiceMatrixOptions{cell-space-limits = 1pt}\begin{align*}
	\mathscr W_{2n}[f_1,f_2]&\coloneq \begin{vNiceArray}{w{c}{30pt}w{c}{30pt}|w{c}{30pt}w{c}{30pt}|w{c}{30pt}w{c}{30pt}|ccc|w{c}{30pt}w{c}{30pt}}[margin]
		f _1 &f_2&\vartheta f_1& \vartheta f_2&\vartheta^2f_1&\vartheta^2f_2& \phantom{t}&\Cdots[shorten-start=-5pt,shorten-end=-5pt]&\phantom{t}&\vartheta^{n-1}f_1&\vartheta^{n-1}f_2\\\hline
		\vartheta f_1& 	\vartheta f_2&\vartheta^2f_1&\vartheta^2f_2&\vartheta^3f_1&\vartheta^3f_1&\phantom{t}&\Cdots[shorten-start=-5pt,shorten-end=-5pt]&\phantom{t}& \vartheta^{n}f_1& \vartheta^{n}f_2\\\hline
		\vartheta^2 f_1&\vartheta^2 f_2&\vartheta^3f_1&\vartheta^3f_2&\vartheta^4f_1&\vartheta^4f_2&\phantom{t}&\Cdots[shorten-start=-5pt,shorten-end=-5pt]&\phantom{t}&\vartheta^{n}f_1& \vartheta^{n}f_2\\\hline
		\Vdots& \Vdots& \Vdots&\Vdots&\Vdots&\Vdots&&&&\Vdots&\Vdots\\\\\\\hline
		\vartheta^{2n-1}f_1& 	\vartheta^{2n-1}f_2&\vartheta^{2n}f_1& \vartheta^{2n}f_2&\vartheta^{2n+1}f_1& \vartheta^{2n+1}f_2&\phantom{t}&\Cdots[shorten-start=-5pt,shorten-end=-5pt]&\phantom{t}&\vartheta^{3n-1}f_1&\vartheta^{3n-1}f_2
	\end{vNiceArray},
\end{align*}
the double Wronskian of the covectors $\delta_{n+1}(f_1)$ and $\delta_n(f_2)$ given by
	\begin{align*}
	\mathscr W_{2n+1}[f_1,f_2]&\coloneq 
	\begin{vNiceArray}{w{c}{30pt}w{c}{30pt}|w{c}{30pt}w{c}{30pt}|w{c}{30pt}w{c}{30pt}|ccc|w{c}{30pt}w{c}{30pt}|w{c}{30pt}}[margin]
		f _1 &f_2&\vartheta f_1& \vartheta f_2&\vartheta^2f_1&\vartheta^2f_2& \phantom{t}&\Cdots[shorten-start=-5pt,shorten-end=-5pt]&
		\phantom{t}%\phantom{t}
		&\vartheta^{n-1}f_1&\vartheta^{n-1}f_2&\vartheta^{n}f_1\\\hline
		\vartheta f_1& 	\vartheta f_2&\vartheta^2f_1&\vartheta^2f_2&\vartheta^3f_1&\vartheta^3f_1&\phantom{t}&\Cdots[shorten-start=-5pt,shorten-end=-5pt]&\phantom{t}& \vartheta^{n}f_1& \vartheta^{n}f_2&\vartheta^{n+1}f_1\\\hline
		\vartheta^2 f_1&\vartheta^2 f_2&\vartheta^3f_1&\vartheta^3f_2&\vartheta^4f_1&\vartheta^4f_2&\phantom{t}&\Cdots[shorten-start=-5pt,shorten-end=-5pt]&\phantom{t}&\vartheta^{n+1}f_1& \vartheta^{n+1}f_2&\vartheta^{n+2}f_1\\\hline
		\Vdots& \Vdots& \Vdots&\Vdots&\Vdots&\Vdots&&&&\Vdots&\Vdots&\Vdots\\\\\\\hline
		\vartheta^{2n-1}f_1& 	\vartheta^{2n-1}f_2&\vartheta^{2n}f_1& \vartheta^{2n}f_2&\vartheta^{2n+1}f_1& \vartheta^{2n+1}f_2&\phantom{t}&\Cdots[shorten-start=-5pt,shorten-end=-5pt]&\phantom{t}&\vartheta^{3n-1}f_1&\vartheta^{3n-1}f_2&\vartheta^{3n}f_1\\
		\hline
		\vartheta^{2n}f_1& 	\vartheta^{2n}f_2&\vartheta^{2n+1}f_1& \vartheta^{2n+1}f_2&\vartheta^{2n+2}f_1& \vartheta^{2n+2}f_2&\phantom{t}&\Cdots[shorten-start=-5pt,shorten-end=-5pt]&\phantom{t}&\vartheta^{3n}f_1&\vartheta^{3n}f_2&\vartheta^{3n+1}f_1
	\end{vNiceArray}.
\end{align*}}
We refer to these objects as double $\delta$-Wronskians.
Then, the $\tau$-function \eqref{eq:tau} can be written as the double $\delta$-Wronskian of two generalized hypergeometric series as follows
\begin{align*}
	\tau_n=\mathscr W_n\left[\pFq{M_1}{N}{b^{(1)}_1,\dots,b^{(1)}_{M^{(1)}}}{c_1,\dots,c_N}{\eta^{(1)}},\pFq{M_2}{N}{b^{(2)}_1,\dots,b^{(2)}_{M^{(2)}}}{c_1,\dots,c_N}{\eta^{(2)}}
	\right].
\end{align*}
Double or 2-component Wronskians were first considered, within the context of integrable systems, by  Freeman, Gilson and  Nimmo  in \cite{Nimmo}, see also \cite{Guil-Manas}.

%The associated $\tau$-function $\tilde \tau_n$  is the determinant of the matrix built form $\mathscr M_n$ by replacing the last row by the next row of $\mathscr M$.

\begin{lemma}\label{lemma:tildetau}
		For the associated $\tau$-function we find
	\begin{align*}
		 \tau^1_n=\vartheta \tau_n.
	\end{align*}
\end{lemma}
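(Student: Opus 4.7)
The plan is to compute $\vartheta\tau_n$ by differentiating the determinant and showing that all but one of the resulting determinants vanish because of repeated rows, leaving exactly $\tau^1_n$.

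First I would verify the basic identity that tells us how $\vartheta=\vartheta^{(1)}+\vartheta^{(2)}$ acts on a moment. From \eqref{eq:hyp1} we have $\vartheta^{(a)}\rho^{(a)}_k=\rho^{(a)}_{k+1}$ while $\vartheta^{(a)}\rho^{(b)}_k=0$ for $a\neq b$, since $\rho^{(b)}_k$ depends only on $\eta^{(b)}$. Inspecting the block structure of $\mathscr M$ in \eqref{eq:matmomhyp}, the entries of row $i$ are of the form $\rho^{(a)}_{i+l}$, where $a\in\{1,2\}$ and $l\geq 0$ are determined by the column index $j$. Hence on every entry
\begin{align*}
\vartheta\,\mathscr M_{i,j}=\mathscr M_{i+1,j},
\end{align*}
i.e.\ the logarithmic derivative $\vartheta$ acts on the moment matrix as a unit upward shift of the row index.

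Next I would apply the Leibniz rule for the derivative of a determinant to $\tau_n=\det\mathscr M^{[n]}$, writing
\begin{align*}
\vartheta\tau_n=\sum_{k=0}^{n-1}\det\mathscr M^{[n]}_{(k)},
\end{align*}
where $\mathscr M^{[n]}_{(k)}$ denotes $\mathscr M^{[n]}$ with its $k$-th row replaced by the $\vartheta$-image of that row. By the previous step, the $\vartheta$-image of row $k$ (restricted to the first $n$ columns) is precisely row $k+1$ of $\mathscr M^{[n]}$ when $k+1\leq n-1$, and is the new row $\begin{bNiceMatrix}\mathscr M_{n,0}&\Cdots&\mathscr M_{n,n-1}\end{bNiceMatrix}$ when $k=n-1$. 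Therefore for $k=0,1,\dots,n-2$ the matrix $\mathscr M^{[n]}_{(k)}$ has two identical consecutive rows and its determinant vanishes, while the only surviving term corresponds to $k=n-1$.

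Finally, identifying the surviving determinant with the definition of $\tau^1_n$ given in the excerpt (remove the $(n-1)$-th row of $\mathscr M^{[n]}$ and append the row of $\mathscr M_{n,0},\dots,\mathscr M_{n,n-1}$ as last) yields $\vartheta\tau_n=\det\mathscr M^{[n,1]}=\tau^1_n$. No serious obstacle is anticipated; the only point requiring care is the bookkeeping that checks the shift identity $\vartheta\mathscr M_{i,j}=\mathscr M_{i+1,j}$ uniformly across the two blocks of columns, after which everything reduces to the standard ``repeated rows kill the determinant'' argument.
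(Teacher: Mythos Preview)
Your proof is correct and follows essentially the same route as the paper's own argument: the paper invokes multilinearity of the determinant together with the observation, read off \eqref{eq:matmomhyp}, that each column depends on only one of the variables $\eta^{(1)},\eta^{(2)}$, which is exactly what makes your entrywise identity $\vartheta\mathscr M_{i,j}=\mathscr M_{i+1,j}$ hold. Your writeup simply spells out the row-by-row Leibniz expansion and the repeated-row cancellation in more detail than the paper does.
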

\begin{proof}
		It follows from the multi-linearity of the determinant and  Equation \eqref{eq:matmomhyp}  in where the even columns depend only on $\eta^{(1)}$ and the odd columns on ${\eta^{(2)}}$.
\end{proof}

\begin{pro}\label{pro:Hptau}
The following expressions in terms of the $\tau$ functions hold true
\begin{align}\label{eq:H_p_tau}
H_n&=\frac{\tau_{n+1}}{\tau_n}, &	p^1_n=-\vartheta \log\tau_n.
\end{align}
\end{pro}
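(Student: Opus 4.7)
The plan is to obtain each identity directly from results already established in the paper, with essentially no new computation required.

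For the first identity $H_n=\tau_{n+1}/\tau_n$, I would start from the Gauss--Borel factorization \eqref{eq:LU} and restrict it to the leading principal $(n+1)\times(n+1)$ blocks. Since $S$ and $\tilde S$ are lower unitriangular, so are their leading principal submatrices $S^{[n+1]}$ and $\tilde S^{[n+1]}$, and because $S,\tilde S$ are lower triangular the truncated factorization reads $\mathscr M^{[n+1]}=(S^{[n+1]})^{-1}H^{[n+1]}(\tilde S^{[n+1]})^{-\intercal}$ with $H^{[n+1]}=\diag(H_0,\dots,H_n)$. Taking determinants and using that unitriangular factors have determinant one yields $\tau_{n+1}=H_0\cdots H_n$. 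The first claim follows by telescoping the ratio $\tau_{n+1}/\tau_n$.

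For the second identity, I would combine two ingredients already on the table. First, expanding the determinant in \eqref{eq:determinant_B} along the last column shows, as recorded in the paper, that
\begin{align*}
p^j_n=(-1)^j\frac{\tau^j_n}{\tau_n},
\end{align*}
and specializing at $j=1$ gives $p^1_n=-\tau^1_n/\tau_n$. Second, Lemma \ref{lemma:tildetau} identifies $\tau^1_n=\vartheta\tau_n$. Putting these two together,
\begin{align*}
p^1_n=-\frac{\tau^1_n}{\tau_n}=-\frac{\vartheta\tau_n}{\tau_n}=-\vartheta\log\tau_n,
\end{align*}
which is the second claim.

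There is no real obstacle: both identities are corollaries of earlier statements (the Gauss--Borel factorization for the first, the determinantal formula \eqref{eq:determinant_B} and Lemma \ref{lemma:tildetau} for the second). The only point requiring attention is bookkeeping of indices: ensuring that the diagonal block $H^{[n+1]}=\diag(H_0,\dots,H_n)$ is correctly aligned so that $\tau_{n+1}$ and $\tau_n$ differ exactly by the factor $H_n$, and that the sign $(-1)^j$ from the cofactor expansion reduces to $-1$ when $j=1$.
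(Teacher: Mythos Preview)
Your proposal is correct and follows the same route as the paper: the paper simply cites \cite{afm} for the identities $H_n=\tau_{n+1}/\tau_n$ and $p^1_n=-\tau^1_n/\tau_n$ and then invokes Lemma \ref{lemma:tildetau}, whereas you spell out the standard block-truncation argument for the first identity and quote the displayed formula $p^j_n=(-1)^j\tau^j_n/\tau_n$ for the second. One minor caution: your ad hoc notation $S^{[n+1]}$ for a leading principal submatrix collides with the paper's use of $S^{[k]}$ for the diagonal coefficients in the expansion $S=I+\Lambda^\intercal S^{[1]}+\cdots$, so if you write this up you should choose a different symbol.
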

	
\begin{proof}
		It can be proven \cite{afm} 
	\begin{align}\label{eq:H_p1n}
		H_n&=\frac{\tau_{n+1}}{\tau_n}, &p^1_n=-\frac{ \tau^1_n}{\tau_n}.
	\end{align}
Then, using the previous Lemma \ref{lemma:tildetau} we get the result.
\end{proof}

\begin{pro}\label{pro:M}
The moment matrix satisfies
\begin{align*}
	\vartheta^{(a)}\mathscr M&=\mathscr M{\Lambda^{(a)}}^\intercal,& a&\in\{1,2\} .
\end{align*}
\end{pro}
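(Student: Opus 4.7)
The plan is a direct termwise calculation on the series defining $\mathscr{M}$, leveraging the fact that, in the Pearson setting, $\eta^{(a)}$ enters the weight $w^{(a)}$ only through the monomial factor $(\eta^{(a)})^k/k!$. I first observe from the explicit expression \eqref{eq:the_weights} that
\[
\vartheta^{(a)} w^{(a)}(k) = k\, w^{(a)}(k), \qquad \vartheta^{(a)} w^{(b)}(k) = 0 \text{ for } b\neq a,
\]
since $w^{(b)}$ is independent of $\eta^{(a)}$ and $\vartheta^{(a)} = \eta^{(a)}\partial/\partial\eta^{(a)}$ is the Euler operator in $\eta^{(a)}$.

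Applying $\vartheta^{(a)}$ termwise to the series defining $\mathscr{M}$ then kills the $w^{(b)}$-summand with $b\neq a$ and inserts a factor $k$ in the surviving one, yielding
\[
\vartheta^{(a)} \mathscr{M} = \sum_{k=0}^\infty k\, w^{(a)}(k)\, X(k)\, X^{(a)}(k)^\intercal.
\]
For the right-hand side I would use the identities $\Lambda^{(a)} X^{(a)}(x) = x X^{(a)}(x)$ and $\Lambda^{(a)} X^{(b)}(x)=0$ for $b\neq a$ recorded at the beginning of the paper. Taking transposes, right-multiplication of $\mathscr{M}$ by $(\Lambda^{(a)})^\intercal$ annihilates the $w^{(b)}$-term with $b\neq a$ and produces an extra factor of $k$ in the remaining one, leading to exactly the same expression obtained above for $\vartheta^{(a)}\mathscr{M}$.

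Matching the two computations term by term gives the identity. No real obstacle arises: the content of the statement is simply that the $\eta^{(a)}$-deformation acts on moments as the (transposed) spectral shift $\Lambda^{(a)}$, a feature already encoded in the $(\eta^{(a)})^k$ dependence of the hypergeometric weight. The only point requiring a brief check is that $\vartheta^{(a)}$ commutes with the summation, which is immediate because for each fixed $k$ the summand is polynomial in $\eta^{(a)}$.
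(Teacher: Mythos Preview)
Your proof is correct and is essentially a fully spelled-out version of the paper's one-line argument: the paper merely points to \eqref{eq:hyp1}, which encodes the relation $\vartheta^{(a)}\rho^{(a)}_n=\rho^{(a)}_{n+1}$ (and the $\eta^{(a)}$-independence of $\rho^{(b)}$ for $b\neq a$), and your termwise computation on the defining series for $\mathscr M$ is precisely the mechanism behind that relation together with the eigenvalue identities $\Lambda^{(a)}X^{(a)}(x)=xX^{(a)}(x)$, $\Lambda^{(a)}X^{(b)}(x)=0$. The only cosmetic remark is that your last sentence about commuting $\vartheta^{(a)}$ with the sum is not really a polynomial-in-$k$ issue but the standard termwise differentiation of the hypergeometric series, which the paper also takes for granted.
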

\begin{proof}It follows from  Equations \eqref{eq:hyp1}. \end{proof}
\begin{rem}\label{rem:extending_tau}
The previous results apply to more general situations than those provided by the Pearson equation and the generalized hypergeometric series. We only need to assume that $\vartheta w^{(a)}(k)=kw^{(a)}(k)$. Using the double $\delta$-Wronskian, we can construct a tau function as follows:
\begin{align*}
	\tau_n&=\mathscr W_n[\rho^{(1)}_0,\rho^{(2)}_0], & \rho^{(a)}_0&=\sum_{n=0}^\infty w^{(a)}(k), & a&\in\{1,2\}.
\end{align*}
Then, Lemma \ref{lemma:tildetau} and Propositions \ref{pro:Hptau} and \ref{pro:M} hold true.
\end{rem}

\subsection{Laguerre-Freud matrix}
\begin{teo}[Laguerre--Freud matrix]Let us assume that the weights fulfill discrte Pearson equation \eqref{eq:Pearson1},
	%and \eqref{eq:Pearson2},
	 where $\theta$, $\sigma^{(1)}$ and $\sigma^{(2)}$ are polynomials with $\deg{\theta}=N$ and $\max{(\deg{\sigma^{(1)}},\deg{\sigma^{(2)}})}=M$.Then, the Laguerre-Freud matrix  given by 
\begin{align}\label{eq:FL} \Psi=\Pi^{-1}\theta(T)=\sigma^{(1)}({T^{(1)}}^\intercal){\Pi^{(1)}}^\intercal+\sigma^{(2)}({T^{(2)}}^\intercal){\Pi^{(2)}}^\intercal
\end{align}
  is a banded matrix with lower bandwidth $2M$ and upper bandwidth $N$, as follows:
\begin{align*}\Psi=(\Lambda^\intercal)^{2M}\psi^{(-2M)}+\cdots+\psi^{(N)}\Lambda^N.
\end{align*}
Moreover,
the following connection formulas are fulfilled:
\begin{align}\label{eq:P}
	\theta(z)B(z-1)&=\Psi B(z), \\ \label{eq:A1} 
(A^{(a)}(z+1))^\intercal 	\sigma^{(a)}(z)	&=(A^{(a)}(z))^\intercal \Psi, & a&\in\{1,2\}.
	% \\ \label{eq:A2} \Psi^\intercal A^{(2)}(z)&=\sigma^{(2)}(z)A^{(2)}(z+1). 
\end{align}
\end{teo}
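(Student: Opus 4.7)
The plan is to take the equality $\Pi^{-1}\theta(T) = \sigma^{(1)}({T^{(1)}}^\intercal){\Pi^{(1)}}^\intercal + \sigma^{(2)}({T^{(2)}}^\intercal){\Pi^{(2)}}^\intercal$ from the preceding Proposition, namely equation \eqref{eq:T}, as given, since the first equality in \eqref{eq:FL} is exactly that. What remains is to establish the banding of $\Psi$ and the two connection formulas \eqref{eq:P} and \eqref{eq:A1}, and I would exploit the two expressions for $\Psi$ in tandem.

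For the banding, first I use $\Psi = \Pi^{-1}\theta(T)$: the factor $\Pi^{-1} = SL^{-1}S^{-1}$ is a product of lower (uni)triangular matrices, hence lower triangular; the Hessenberg matrix $T = (\Lambda^\intercal)^2\gamma + \Lambda^\intercal \beta + \alpha + \Lambda$ has upper bandwidth $1$, so $\theta(T)$ has upper bandwidth at most $N$, and a short index chase gives $\Psi$ upper bandwidth at most $N$. Using instead the second representation: since $L^{(a)}$ is lower triangular, $\Pi^{(a)}$ is lower triangular and ${\Pi^{(a)}}^\intercal$ is upper triangular. Because $\Lambda^{(a)} = \Lambda^2 I^{(a)}$ has upper bandwidth $2$ and conjugation by the lower unitriangular pair $\tilde S, \tilde S^{-1}$ preserves an upper-bandwidth bound, $T^{(a)}$ has upper bandwidth at most $2$, so ${T^{(a)}}^\intercal$ has lower bandwidth at most $2$. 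Therefore $\sigma^{(a)}({T^{(a)}}^\intercal)$ has lower bandwidth at most $2M^{(a)} \leq 2M$, and right multiplication by the upper triangular ${\Pi^{(a)}}^\intercal$ cannot enlarge it.

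For the type II connection \eqref{eq:P}, the key intertwining is
\begin{align*}
\theta(T)B(z) = S\theta(\Lambda)S^{-1}SX(z) = S\theta(z)X(z) = \theta(z)B(z),
\end{align*}
which follows from $T = S\Lambda S^{-1}$ and $\Lambda X = zX$. Since $\theta(z)$ is scalar it commutes with $\Pi^{-1}$, and combining with $B(z-1) = \Pi^{-1}B(z)$ I obtain
\begin{align*}
\theta(z)B(z-1) = \theta(z)\Pi^{-1}B(z) = \Pi^{-1}\theta(z)B(z) = \Pi^{-1}\theta(T)B(z) = \Psi B(z).
\end{align*}

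For the type I connection \eqref{eq:A1}, fix $a=1$ (the case $a=2$ is symmetric). Expanding $(A^{(1)})^\intercal \Psi$ via the second expression in \eqref{eq:FL}, the identity $T^{(1)} A^{(1)} = z A^{(1)}$ yields $(A^{(1)})^\intercal\sigma^{(1)}({T^{(1)}}^\intercal) = \sigma^{(1)}(z)(A^{(1)})^\intercal$, and since $(A^{(1)})^\intercal{\Pi^{(1)}}^\intercal = (\Pi^{(1)}A^{(1)})^\intercal = (A^{(1)}(z+1))^\intercal$ the first summand produces the desired $\sigma^{(1)}(z)(A^{(1)}(z+1))^\intercal$. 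For the cross term, $T^{(2)}A^{(1)}=0$ collapses $(A^{(1)})^\intercal\sigma^{(2)}({T^{(2)}}^\intercal)$ to $\sigma^{(2)}(0)(A^{(1)})^\intercal$, after which the remaining factor $(A^{(1)})^\intercal{\Pi^{(2)}}^\intercal = (\Pi^{(2)}A^{(1)})^\intercal = (H^{-1}\tilde S L^{(2)}X^{(1)})^\intercal$ vanishes because $L^{(2)}$ is supported on odd-indexed rows and columns whereas $X^{(1)}$ is supported on even entries, forcing $L^{(2)}X^{(1)}=0$. The main obstacle I anticipate is the bandwidth argument: one has to juggle both representations of $\Psi$ at once and check carefully that conjugation by $\tilde S,\tilde S^{-1}$ does not widen the upper band of $\Lambda^{(a)}$. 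The cross-term cancellation in type I is conceptually the most delicate point, but it becomes transparent once the interlocking parity support of $L^{(a)}$ versus $X^{(b)}$ for $a\neq b$ is identified.
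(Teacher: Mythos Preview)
Your proof is correct and follows essentially the same route as the paper: both use $\Psi=\Pi^{-1}\theta(T)$ to bound the upper bandwidth and the second representation to bound the lower one, both derive \eqref{eq:P} from $\theta(T)B(z)=\theta(z)B(z)$ followed by $\Pi^{-1}B(z)=B(z-1)$, and both obtain \eqref{eq:A1} by applying $\Psi^\intercal$ to $A^{(a)}$ via the second representation. Your treatment of the type~I cross term is in fact more explicit than the paper's: where the paper passes directly from $(\Pi^{(1)}\sigma^{(1)}(T^{(1)})+\Pi^{(2)}\sigma^{(2)}(T^{(2)}))A^{(a)}$ to $\Pi^{(a)}\sigma^{(a)}(z)A^{(a)}$ without comment, you correctly isolate the residual $\sigma^{(b)}(0)\Pi^{(b)}A^{(a)}$ for $b\neq a$ and kill it via $L^{(b)}X^{(a)}=0$, which is the right mechanism.
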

\begin{proof} If $\deg{\theta}=N$ then $\theta(T)=\Lambda^N+M^{(N-1)}\Lambda^{N-1}+\cdots$ so it will have $N$ possible nonzero superdiagonals and using the same argument we conclude that it has  $2M$ subdiagonals possibly nonzero.
	
	For  the type II polynomials $B(z)$ we have
\begin{align*}
	\Psi B(z)=\Pi^{-1} \theta(T)B(z)=\Pi^{-1}\theta(z)B(z)=\theta(z)B(z-1)
\end{align*}
and for the type I polynomials $A^{(a)}$ we find
\begin{align*}
	\Psi^\intercal A^{(a)}(z)=\left(\Pi^{(1)}\sigma^{(1)}(T^{(1)})+\Pi^{(2)}\sigma^{(2)}(T^{(2)})\right)A^{(a)}(z)=\Pi^{(a)}\sigma^{(a)}(z)A^{(a)}(z)=\sigma^{(a)}(z)A^{(a)}(z+1).
\end{align*}
\end{proof}
%For the Laguerre-Freud structure matrix we have a compatibility equation which we will refer as \textit{Compatibility I}:

\begin{lemma}\label{lemma:thecnical_lemma_X}
	If the matrix $M$ is such that $MX(z)=0$ then $M=0$.
\end{lemma}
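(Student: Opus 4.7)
The plan is to read the identity $MX(z)=0$ row by row and exploit the linear independence of the monomials $\{1,z,z^{2},\dots\}$. Writing out the $n$-th component, one has
\begin{align*}
0=(MX(z))_{n}=\sum_{k\geq 0}M_{n,k}\,z^{k}.
\end{align*}
In every application of this lemma in the paper the matrix $M$ is banded (it is built from $T$, $T^{(a)}$, $\Pi^{\pm 1}$, $\Pi^{\pm(a)}$, $\Psi$, and polynomial combinations thereof), so each row has only finitely many nonzero entries and the series above is genuinely a polynomial in $z$.

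From this point the conclusion is immediate: a polynomial in $z$ that vanishes for every $z\in\C$ (or even for infinitely many values of $z$) has all its coefficients equal to zero, so $M_{n,k}=0$ for all $k\in\N_{0}$. Since the row index $n$ was arbitrary, $M=0$.

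The only subtlety is making sure that $MX(z)$ is well defined as an entrywise finite sum, and this is built into the hypotheses on how the lemma is used, via the banded structure of the relevant operators. A purely algebraic alternative, if one prefers to avoid analytic evaluation, is to specialise $z$ to the integer nodes $z=0,1,2,\dots$; then $MX(0)=0$ forces the zeroth column of $M$ to vanish, after which $MX(1)=0$ combined with the previously killed column kills the first, and induction together with the nonvanishing of leading Vandermonde minors annihilates every column of $M$. Either route completes the proof with no real obstacle.
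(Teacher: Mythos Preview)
Your proof is correct and rests on the same underlying idea as the paper's—extracting coefficients from the identity $MX(z)=0$—but organized dually. The paper works column-wise: it differentiates $n$ times and evaluates at $z=0$, using $\frac{1}{n!}X^{(n)}(0)=e_n$ to conclude $Me_n=0$, so the $n$-th column of $M$ vanishes for every $n$. You work row-wise, reading $(MX(z))_n=\sum_{k}M_{n,k}z^{k}$ as a polynomial identity and invoking the linear independence of monomials. The paper's phrasing has the mild advantage that it never needs to mention bandedness—the relation $\frac{1}{n!}X^{(n)}(0)=e_n$ is a componentwise finite statement, so $Me_n$ is automatically well defined—whereas you pause to justify why each row sum terminates. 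Both routes are equivalent once one interprets $MX(z)=0$ formally, and your Vandermonde alternative, while correct, is extra machinery that neither version actually needs.
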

\begin{proof}
	From $MX(z)=0$ we conclude that $M \frac{1}{n!}X^{(n)}(0)=0$ but $\frac{1}{n!}X^{(n)}(0)=e_n$, where $e_n$ is the vector with all its entries zero but for a unit at the $n$-th entry. Therefore, we conclude that the $n$-th column of $M$ has all its entries equal to zero. Consequently, $M=0$
\end{proof}

\begin{pro}[Compatibility I] For recursion  and Freud--Laguerre matrices the following compatibility relation is fulfilled
\begin{align}\label{eq:comp1Psi} [\Psi,T]&=\Psi .
%	\\  \label{eq:comp1PsiT}
%[T^{(1)}+T^{(2)},\Psi^\intercal]&=\Psi^\intercal.
\end{align}
\end{pro}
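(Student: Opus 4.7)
The plan is to establish the identity by evaluating both sides on the column vector $B(z)$ of type II polynomials and then invoking Lemma \ref{lemma:thecnical_lemma_X} to conclude the matrix identity. The two key ingredients available are the spectral relation $TB(z)=zB(z)$ from \eqref{eq:recursion} and the connection formula $\Psi B(z)=\theta(z)B(z-1)$ from \eqref{eq:P}, together with $TB(z-1)=(z-1)B(z-1)$ which is just the same recursion at the shifted spectral parameter.

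First, I would compute $(\Psi T-T\Psi)B(z)$ in two ways. Using $TB(z)=zB(z)$, one has
\begin{align*}
\Psi T B(z)=z\,\Psi B(z)=z\,\theta(z) B(z-1),
\end{align*}
while using $\Psi B(z)=\theta(z)B(z-1)$ followed by the recursion at $z-1$,
\begin{align*}
T\Psi B(z)=\theta(z)\,T B(z-1)=\theta(z)(z-1)B(z-1).
\end{align*}
Subtracting gives $[\Psi,T]B(z)=\theta(z)B(z-1)=\Psi B(z)$, so $\bigl([\Psi,T]-\Psi\bigr)B(z)=0$.

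Second, I would convert this identity on polynomials into a matrix identity. Since $B(z)=S X(z)$ with $S$ lower unitriangular (in particular invertible), setting $M\coloneq \bigl([\Psi,T]-\Psi\bigr)S$ one has $MX(z)=0$ for all $z$. By Lemma \ref{lemma:thecnical_lemma_X}, this forces $M=0$, and multiplying by $S^{-1}$ on the right yields $[\Psi,T]=\Psi$, which is \eqref{eq:comp1Psi}.

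No real obstacle is expected here: the argument is algebraic and relies only on the two functional identities for $B(z)$ already at our disposal, plus the standard trick of lifting a polynomial identity to a matrix identity via the invertibility of $S$ and the injectivity statement of Lemma \ref{lemma:thecnical_lemma_X}. As a sanity check, one could also derive \eqref{eq:comp1Psi} directly from the definition $\Psi=\Pi^{-1}\theta(T)$ together with the dressed-Pascal identity $\Pi^{-1}T\Pi=T-I$ (which encodes $X(z-1)=L^{-1}X(z)$ and $\Lambda X(z)=zX(z)$ at the dressed level), giving $T\Pi^{-1}=\Pi^{-1}(T+I)$ and hence $T\Psi=\Pi^{-1}(T+I)\theta(T)=\Psi T+\Psi$; this alternative route confirms the result without ever evaluating on $B(z)$.
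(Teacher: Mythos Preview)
Your main argument is correct and is precisely the paper's proof: act with $[\Psi,T]-\Psi$ on $B(z)$ using \eqref{eq:recursion} and \eqref{eq:P}, obtain zero, and lift to a matrix identity through $B=SX$, Lemma \ref{lemma:thecnical_lemma_X}, and the invertibility of $S$.

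One caveat on your parenthetical sanity check: the signs are off. From $\Pi^{-1}B(z)=B(z-1)$ and $TB(z)=zB(z)$ one gets $T\Pi^{-1}B(z)=(z-1)B(z-1)=\Pi^{-1}(T-I)B(z)$, so $T\Pi^{-1}=\Pi^{-1}(T-I)$ (equivalently $\Pi^{-1}T\Pi=T+I$, not $T-I$). Then $T\Psi=\Pi^{-1}(T-I)\theta(T)=\Psi T-\Psi$, which is indeed $[\Psi,T]=\Psi$; as you wrote it, the alternative would produce $[\Psi,T]=-\Psi$.
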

\begin{proof}
%	Relation \eqref{eq:comp1Psi} can be proven from recurrence relation. Indeed, 
%we can write $TB(z-1)=(z-1)B(z-1)$, if we multiply the last equation by $\theta(z)$ and using \eqref{eq:P} we get
%\begin{align*}
%	T\theta(z)B(z-1)=(z-1)\theta(z)B(z-1).
%\end{align*}

Using \eqref{eq:P} and \eqref{eq:recursion} we find
\begin{align*} 
	T\Psi B(z)&=T\theta(z)B(z-1)=\theta(z)(z-1)\theta(z)B(z-1), &
	\Psi(T-I)B(z)&=\Psi (z-1)B(z)=(z-1)\theta(z) B(z-1).
\end{align*}
Therefore, $([\Psi,T]-\Psi)B(z)=0$.  Consequently, we get $([\Psi,T]-\Psi)S X(z)=0$ and according to Lemma \ref{lemma:thecnical_lemma_X} we deduce that $([\Psi,T]-\Psi)S =0$, and as $S$ is an unitriangular matrix, with inverses, we deduce that $[\Psi,T]-\Psi$ is the zero matrix.

%From here we deduce that 
%\begin{align*} 
%	T\Psi B(z)=(z-1)\Psi B(z)=\Psi(T-I)B(z),
%\end{align*}
%so that
%\begin{align*} T\Psi=\Psi T-\Psi
%\end{align*}
%and Equation \eqref{eq:comp1Psi}  follows.

%The relation \eqref{eq:comp1PsiT} can be proven transposing \eqref{eq:comp1Psi}:
%\begin{align*}  [\Psi,T]^\intercal=\Psi^\intercal
%\end{align*} 
%so that
%\begin{align*} 
%	\Psi^\intercal=(\Psi T)^\intercal-(T\Psi)^\intercal=T^\intercal \Psi^\intercal-\Psi^\intercal T^\intercal 
%\end{align*}
%and we get
%\begin{align*} 
%	\Psi^\intercal=[T^\intercal,\Psi^\intercal]=\left[T^{(1)}+T^{(2)},\Psi^\intercal\right]. 
%\end{align*}
\end{proof}

\subsection{Contiguous hypergeometric relations}
%\subsubsection{}
For generalized  hypergeometric series we have the following  contiguous  relations
\begin{align*}
	%(\vartheta+a_i)_MF_N(a_1,\dots,a_i,\dots,a_M;b_1,\dots,b_N;\eta)&=a_i {}_MF_N(a_1,\dots,a_i+1,\dots,a_M;b_1,\dots,a_N;\eta), \\
	\left(\eta\frac{\d\quad}{\d\eta}+b_i\right)\,	\pFq{M}{N}{b_1,\dots,b_M}{c_1,\dots,c_N}{\eta}&=b_i\,\pFq{M}{N}{b_1,\dots,b_i+1,\dots,b_M}{c_1,\dots,c_N}{\eta}, & i&\in\{1,\dots,M\}
	\\
\left(\eta\frac{\d\quad}{\d\eta}+c_j-1\right) \,\pFq{M}{N}{b_1,\dots,b_M}{c_1,\dots,c_N}{\eta}&=(c_j-1)\, \pFq{M}{N}{b_1,\dots,b_M}{c_1,\dots,c_j-1,\dots,c_N}{\eta},  & j&\in\{1,\dots,N\}
%\\\frac{\d}{\d\eta} \pFq{M}{N}{b_1,\dots,b_M}{c_1,\dots,c_N}{\eta}&=\kappa \,\pFq{M}{N}{b_1+1,\dots,b_M+1}{c_1+1,\dots,c_N+1}{\eta} 
\end{align*}
%where $\kappa:=\frac{\prod^M_{i=1}b_i}{\prod^N_{j=1}c_j}.$

Let us denote by $	{}_i\Theta^{(a)}$ the shifting the $b^{(a)}_i\to b^{(a)}_i+1$ and by $\Theta_j$ the shifting $c_j\to c_j+1$.
These contiguity relations translate into the moment matrix as follows: 
\begin{teo}[Contiguous relations for the moment matrix]
The following equations for the moment matrix hold:
\begin{align}\label{rel:hyp1a}
	\mathscr M{\Lambda^{(a)}}^\intercal+b^{(a)}_i\mathscr M&=b^{(a)}_i {}_i\Theta^{(a)}\mathscr M, & i&\in\{1,\dots,M^{(a)}\}, & a&\in\{1,2\},\\ 
		\label{rel:hyp1b} \mathscr M{\Lambda^2}^{\intercal}+(c_j-1)\mathscr M&=(c_j-1)\Theta_j\mathscr M ,&j&\in\{1,\dots,N\}
%\\ \label{rel:hyp2b} \mathscr M\Lambda^{(2)}^\intercal+(d_i-1)\mathscr M&=(d_i-1)T_i^{(2)}\mathscr M ,\\ 
%	\label{rel:hyp1c}
%	\frac{\d}{\d {\eta^{(1)}}}\mathscr M&=\kappa^{(1)}\boldsymbol\Theta^{(1)}\mathscr M-\mathscr Mp_2, \\ \label{rel:hyp2c} \frac{\d}{\d {\eta^{(2)}}}\mathscr M&=\kappa^{(2)}\boldsymbol\Theta^{(2)}\mathscr M-\mathscr MI^{(1)}. 
\end{align}
\end{teo}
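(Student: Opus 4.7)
The plan is to translate the scalar contiguous relations for generalized hypergeometric series into matrix identities by combining them with Proposition~\ref{pro:M}. That proposition shows that right multiplication by $\Lambda^{(a)\intercal}$ represents the operator $\vartheta^{(a)}$ acting on $\mathscr M$. Since $I^{(1)}+I^{(2)}=I$ we have $\Lambda^{(1)}+\Lambda^{(2)}=\Lambda^2$, and summing the two instances of Proposition~\ref{pro:M} yields $\vartheta\mathscr M=\mathscr M(\Lambda^2)^\intercal$. Hence the left-hand sides of the two equations in the theorem can be rewritten as
\begin{align*}
\mathscr M\Lambda^{(a)\intercal}+b^{(a)}_i\mathscr M &=(\vartheta^{(a)}+b^{(a)}_i)\mathscr M,\\
\mathscr M(\Lambda^2)^\intercal+(c_j-1)\mathscr M &=(\vartheta+c_j-1)\mathscr M,
\end{align*}
which reduces the problem to applying the scalar contiguity identities entry-wise.

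Next I would check the first identity column by column using the alternating structure of $\mathscr M$ exhibited in \eqref{eq:matmomhyp}: in the ``$a$-columns'' the entries are $\rho^{(a)}_n=(\vartheta^{(a)})^n\rho^{(a)}_0$, while in the columns of the opposite species the entries depend only on $\eta^{(b)}$, $b\neq a$, and are therefore annihilated by $\vartheta^{(a)}$ and fixed by the parameter shift ${}_i\Theta^{(a)}$. Because $\vartheta^{(a)}$ commutes with ${}_i\Theta^{(a)}$ (a shift of a parameter independent of $\eta^{(a)}$) and with itself, the scalar identity $(\vartheta^{(a)}+b^{(a)}_i)\rho^{(a)}_0=b^{(a)}_i\,{}_i\Theta^{(a)}\rho^{(a)}_0$ propagates verbatim to every $\rho^{(a)}_n$. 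On the $a$-columns this is exactly the claimed equality, while on the remaining columns both sides reduce trivially to $b^{(a)}_i\rho^{(b)}_n$.

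For the second identity the denominator parameter $c_j$ appears in both $w^{(1)}$ and $w^{(2)}$, so $\Theta_j$ now acts non-trivially on every column. Nevertheless, on an $a$-column $\vartheta$ acts as $\vartheta^{(a)}$ (since $\vartheta^{(b)}$ annihilates $\eta^{(a)}$-only functions for $b\neq a$), and the same commutation-plus-propagation argument, now driven by the contiguous relation for $c_j$, delivers the identity column by column. No step in this plan is really an obstacle; the only mild bookkeeping is verifying that $\Lambda^{(a)\intercal}$ indeed kills the columns of the opposite species, which is immediate from the relations $\Lambda^{(a)}X^{(b)}=\delta_{ab}\,xX^{(a)}$ recorded at the start of the paper. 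The key structural input is thus the matrix realisation of the differential operators $\vartheta^{(a)}$ afforded by Proposition~\ref{pro:M}, after which the classical contiguity identities do all the work.
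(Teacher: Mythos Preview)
Your proposal is correct and follows essentially the same approach as the paper. The paper works directly entry-wise, writing out $(\vartheta^{(a)}+b_i^{(a)})\mathscr M_{n,2m}$ and $(\vartheta^{(a)}+b_i^{(a)})\mathscr M_{n,2m+1}$ and invoking the scalar contiguous relations together with the block structure \eqref{eq:matmomhyp}, whereas you first invoke Proposition~\ref{pro:M} to pass from $\mathscr M\Lambda^{(a)\intercal}$ to $\vartheta^{(a)}\mathscr M$ and then perform the identical column-by-column check; the two arguments are the same in substance.
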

\begin{proof}To prove Equation \eqref{rel:hyp1a} we notice that,
according to \eqref{eq:matmomhyp},  we can write the following equations for moment matrix entries 
\begin{align*}
    \big(\vartheta^{(1)}+b_i^{(1)}\big)\mathscr M_{n,2m}&=b_i^{(1)}\Theta_i^{(1)}\mathscr M_{n,2m}, \\
\big(\vartheta^{(1)}+b_i^{(1)}\big)\mathscr M_{n,2m+1}&=b_i^{(1)} \mathscr M_{n,2m+1}=\Theta_i^{(1)}\mathscr M_{n,2m+1},\\
  \big(\vartheta^{(2)}+b_i^{(2)}\big)\mathscr M_{n,2m}&=b_i ^{(2)}\mathscr M_{n,2m}=b_i^{(2)}\Theta_i^{(2)}\mathscr M_{n,2m+1}, \\ \big(\vartheta^{(2)}+b_i^{(2}\big)\mathscr M_{n,2m+1}&=b_i^{(2)}\Theta_i^{(2)}\mathscr M_{n,2m+1}.
\end{align*}
with $n,m\in\N_0$. Let us  prove Equation \eqref{rel:hyp1b} by noticing that the entries of the moment matrix satisfy
\begin{align*}
	(\vartheta+c_j-1)\mathscr M_{(n,2m)}&=(c_j-1)\Theta_i\mathscr M_{(n,2m)}, & 	(\vartheta+c_j-1)\mathscr M_{(n,2m+1)}&=(c_j-1)\Theta_i\mathscr M_{(n,2m+1)}.
\end{align*} 
so that
\begin{align*}
	\mathscr M({\Lambda^{(1)}}^\intercal+{\Lambda^{(2)}}^\intercal)+(c_j-1)\mathscr M&=(c_j-1)\Theta_j\mathscr M.
\end{align*}
%
%To prove \eqref{rel:hyp1c}, we can write the following relations for moment matrix elements:
%\begin{align*}
%	\frac{\d}{\d {\eta^{(1)}}}\mathscr M_{(n,2m)}&=\kappa^{(1)}\boldsymbol\Theta^{(1)}\mathscr M_{(n,2m)}, &\frac{\d}{\d {\eta^{(1)}}}\mathscr M_{(n,2m+1)}&=0, \\
%		\frac{\d}{\d {\eta^{(2)}}}\mathscr M_{(n,2m)}&=0, &\frac{\d}{\d {\eta^{(1)}}}\mathscr M_{(n,2m+1)}&=\kappa^{(2)}\boldsymbol\Theta^{(2)}\mathscr M_{(n,2m+1)}, 
%\end{align*}
%so we can see that $\frac{d}{d {\eta^{(1)}}}\mathscr M=\kappa^{(1)}\boldsymbol\Theta^{(1)}\mathscr M-\mathscr Mp_2$ is true.
\end{proof}
\subsection{Connection matrices}
\begin{defi} Connection matrices are defined as
\begin{align}
	\label{def:io}
	 _i\omega^{(a)}&\coloneq(_i\Theta^{(a)}H)^{-1}( _i\Theta^{(a)}\tilde{S})(\Lambda^{(a)}+b_i^{(a)})\tilde{S}^{-1}H, &i&\in\{1,\dots,M^{(a)}\}, &a&\in\{1,2\}, \\
	 \label{def:iO} 
	 _i\Omega^{(a)}&\coloneq S( _i\Theta^{(a)}S)^{-1}, &i&\in\{1,\dots,M^{(a)}\}, &a&\in\{1,2\}, \\\
	  \label{def:oi} 
	  \omega_j&\coloneq(\Theta_jH)^{-1}(\Theta_j\tilde{S})(\Lambda^2+c_j-1)\tilde{S}^{-1}H,&j&\in\{1,\dots,N\} ,\\
	  \label{def:Oi} 
	  \Omega_j&\coloneq S(\Theta_jS)^{-1},&j&\in\{1,\dots,N\}.%
%	\\ \label{def:o} \boldsymbol\omega^{(1)}&=(\boldsymbol\Theta^{(1)}H)^{-1}(\theta^{(1)}\tilde{S})B_1^{-1}\Lambda^{(1)}\tilde{S}^{-1}H, \\ \label{def:O} \boldsymbol\Omega^{(1)}&=SB(\boldsymbol\Theta^{(1)}S)^{-1}, 
\end{align}
%and connection matrices ${}_i\omega^{(2)}$, ${}_i\Omega^{(2)}$, $\omega^{(2)}$ and $\Omega^{(2)}$ are defined analogously.
\end{defi}

\begin{lemma}
The following relations between connection matrices are fulfilled
\begin{align} \label{relconmat:io} _i{\omega^{(a)}}^\intercal&=b_i^{(a)} {}_i\Omega^{(a)}, &i&\in\{1,\dots,M^{(a)}\}, &a&\in\{1,2\}, \\ \label{relconmat:oi}{\omega_j}^\intercal&=(c_j-1)\Omega_j, &j&\in\{1,\dots,N\}.
	%\\ \label{relconmat:o} %\kappa^{(n)}\boldsymbol\Omega^{(a)}&={\boldsymbol\omega^{(a)}}^\intercal.
 \end{align}
%where $a \in\{1,2\}$.
\end{lemma}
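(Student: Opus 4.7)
The plan is to derive each of the two identities by substituting the Gauss--Borel factorization $\mathscr M = S^{-1}H\tilde S^{-\intercal}$ into the contiguous relations for the moment matrix \eqref{rel:hyp1a} and \eqref{rel:hyp1b}, and then rearranging factors so that the definitions \eqref{def:io}--\eqref{def:Oi} of the connection matrices become visible on both sides.

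For \eqref{relconmat:io} I would start from \eqref{rel:hyp1a}, written as $\mathscr M(\Lambda^{(a)\intercal}+b_i^{(a)}I)= b_i^{(a)}\,{}_i\Theta^{(a)}\mathscr M$. Replacing $\mathscr M$ by $S^{-1}H\tilde S^{-\intercal}$ and the shifted moment matrix by ${}_i\Theta^{(a)}\mathscr M=({}_i\Theta^{(a)}S)^{-1}({}_i\Theta^{(a)}H)({}_i\Theta^{(a)}\tilde S)^{-\intercal}$ gives
\begin{align*}
S^{-1}H\tilde S^{-\intercal}(\Lambda^{(a)\intercal}+b_i^{(a)}I)=b_i^{(a)}({}_i\Theta^{(a)}S)^{-1}({}_i\Theta^{(a)}H)({}_i\Theta^{(a)}\tilde S)^{-\intercal}.
\end{align*}
Left-multiplying by $S$ and right-multiplying by $({}_i\Theta^{(a)}\tilde S)^{\intercal}({}_i\Theta^{(a)}H)^{-1}$ leaves $b_i^{(a)}\,S({}_i\Theta^{(a)}S)^{-1}=b_i^{(a)}\,{}_i\Omega^{(a)}$ on the right, while the left-hand side becomes $H\tilde S^{-\intercal}(\Lambda^{(a)\intercal}+b_i^{(a)}I)({}_i\Theta^{(a)}\tilde S)^{\intercal}({}_i\Theta^{(a)}H)^{-1}$. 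Taking the transpose of \eqref{def:io} shows that this last expression is exactly ${}_i\omega^{(a)\intercal}$, so the identity follows.

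The proof of \eqref{relconmat:oi} is the same strategy applied to \eqref{rel:hyp1b}, with $\Lambda^{2\intercal}$ in place of $\Lambda^{(a)\intercal}$, $c_j-1$ in place of $b_i^{(a)}$, and $\Theta_j$ in place of ${}_i\Theta^{(a)}$. The same left and right multiplications reproduce $(c_j-1)\Omega_j$ on one side and $\omega_j^{\intercal}$ on the other.

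No real obstacle is expected; the argument is bookkeeping with the $LU$ factorization, its transpose, and the fact that the shift operators ${}_i\Theta^{(a)}$ and $\Theta_j$ act entrywise and therefore commute with taking transposes and products of the triangular factors. The only care needed is in tracking the transposes of $\Lambda^{(a)}$ and $\Lambda^2$ and remembering that $(\Lambda^{(a)}+b_i^{(a)})^\intercal=\Lambda^{(a)\intercal}+b_i^{(a)}I$, which is the step that matches the factor $b_i^{(a)}$ (respectively $c_j-1$) appearing in both the contiguous relation and the target formula.
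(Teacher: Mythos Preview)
Your proposal is correct and follows essentially the same approach as the paper's own proof: substitute the Gauss--Borel factorization $\mathscr M=S^{-1}H\tilde S^{-\intercal}$ (and its shifted version) into the contiguous relation \eqref{rel:hyp1a}, left-multiply by $S$ and right-multiply by $({}_i\Theta^{(a)}\tilde S)^{\intercal}({}_i\Theta^{(a)}H)^{-1}$, and recognize the two sides as ${}_i\omega^{(a)\intercal}$ and $b_i^{(a)}\,{}_i\Omega^{(a)}$; the second identity is handled analogously from \eqref{rel:hyp1b}. The paper presents exactly these manipulations, so there is nothing to add.
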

\begin{proof}
Let us prove Equation \eqref{relconmat:io}. From  the definition \eqref{rel:hyp1a} and the Gauss--Borel factorization \eqref{eq:LU} of the moment matrix $\mathscr M$ we find
\begin{align*}
	S^{-1}H\tilde{S}^{-\intercal}{\Lambda^{(a)}}^\intercal+b_i^{(a)}S^{-1}H\tilde{S}^{-\intercal}&=
	b_i^{(a)}({}_i\Theta^{(a)}S)^{-1}({}_i\Theta^{(a)}H)({}_i\Theta^{(a)}\tilde{S})^{-\intercal}, &i&\in\{1,\dots,M^{(a)}\}, &a&\in\{1,2\},
	\end{align*} 
that after some cleaning leads to 
\begin{align*}
	H\tilde{S}^{-\intercal}({\Lambda^{(a)}}^\intercal+b_i^{(a)})({}_i\Theta^{(a)}\tilde{S})^\intercal({}_i\Theta^{(a)}H)^{-1}&=b_i^{(a)}S({}_i\Theta^{(a)}S)^{-1},&i&\in\{1,\dots,M^{(a)}\}, &a&\in\{1,2\}.
\end{align*}
Analogously, Equation \eqref{relconmat:oi} can be proved using \eqref{rel:hyp1b} and \eqref{eq:LU}.
\end{proof}

\begin{pro}%(Banded structure of connection matrices)
Connection matrices have a banded triangular structure with three non-null diagonals. 
In the one hand, for $i\in\{1,\dots,M^{(a)}\}$, $a\in\{1,2\}$, we have
\begin{align}\label{est:io_0}
	{}_i\omega^{(a)}&=b^{(a)}_iI+\big({}_i\omega^{(a)}\big)^{[1]}\Lambda+\big({}_i\omega^{(a)}\big)^{[2]}\Lambda^2, \\ \label{est:iO_0} {}_i\Omega^{(a)}&={{}\Lambda^\intercal}^2\frac{1}{b^{(a)}_i}\big({}_i\omega^{(a)}\big)^{[2]}+\Lambda^\intercal 
	\frac{1}{b^{(a)}_i} \big({}_i\omega^{(a)}\big)^{[1]}+I, 
\end{align}
with
\begin{align*}
\big({}_i\omega^{(a)}\big)^{[1]}&\coloneq b_i^{(a)}(S^{[1]}-{}_i\Theta^{(a)}S^{[1]})=({}_i\Theta^{(a)}H)^{-1}(\mathfrak a_{-}H)\big(\mathfrak a_{+}({}_i\Theta^{(a)}\tilde{S}^{[1]})I^{(\pi(a))}-I^{(a)}(\mathfrak a_{-}\tilde{S}^{[1]})\big),\\
\big({}_i\omega^{(a)}\big)^{[2]}&\coloneq({}_i\Theta^{(a)}H)^{-1}I^{(a)}\mathfrak a_{-}^2H,
\end{align*}
where $\pi\in S_2$ is the permutation $\pi(1)=2$ and $\pi(2)=1$.
%Moreover,
%\begin{align*}
%	\frac{1}{b^{(1)}_i} \big({}_i\omega^{(1)}\big)^{[1]}&=({}_i\Theta^{(1)}H)^{-1}(\mathfrak a_{-}H)\big(\mathfrak a_{+}({}_i\Theta^{(1)}\tilde{S}^{[1]})p_2-I^{(1)}(\mathfrak a_{-}\tilde{S}^{[1]})\big), \\
%	\frac{1}{b^{(2)}_i} \big({}_i\omega^{(2)}\big)^{[1]}&=({}_i\Theta^{(2)}H)^{-1}(\mathfrak a_{-}H)\big(\mathfrak a_{+}({}_i\Theta^{(2)}\tilde{S}^{[1]})I^{(1)}-p_2(\mathfrak a_{-}\tilde{S}^{[1]})\big).
%\end{align*}
In the other hand, for $j\in\{1,\dots,N\}$, we find
\begin{align}	
	\omega_j &=(c_j-1)I+	\omega_j^{[1]}\Lambda+	\omega_j^{[2]}\Lambda^2,\\
	\Omega_j&=I+\Lambda^\intercal \frac{1}{c_j-1}\omega_j^{[1]}+(\Lambda^\intercal)^2 \frac{1}{c_j-1} \omega_j^{[2]},
\end{align}
with
\begin{align*}
\omega_j^{[1]}&\coloneq (c_j-1)(S^{[1]}-\Theta_jS^{[1]})=\mathfrak a_{-}H(\Theta_jH)^{-1}\big(\mathfrak  a_{+}(\Theta_j\tilde{S}^{[1]})-\mathfrak a_{-}\tilde{S}^{[1]}\big) ,\\
\omega_j^{[2]}&\coloneq (\Theta_jH)^{-1}\mathfrak a_{-}^2H.
\end{align*}

\end{pro}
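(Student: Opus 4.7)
The plan is to obtain the claimed three-diagonal band structure as the intersection of two opposite bandwidth bounds coming from the two representations \eqref{def:io}--\eqref{def:Oi}, and then to read off the coefficients by direct expansion in $\Lambda^\intercal$.

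First I would argue the bandwidth bound. Starting from \eqref{def:io}, the matrix ${}_i\omega^{(a)}=({}_i\Theta^{(a)}H)^{-1}({}_i\Theta^{(a)}\tilde S)(\Lambda^{(a)}+b_i^{(a)}I)\tilde S^{-1}H$ is the product of two diagonal factors, two lower unitriangular factors ${}_i\Theta^{(a)}\tilde S$ and $\tilde S^{-1}$, and the middle factor $\Lambda^{(a)}+b_i^{(a)}I=\Lambda^2 I^{(a)}+b_i^{(a)}I$. An entry-by-entry count shows the middle factor has upper bandwidth $2$, and sandwiching it between lower triangular matrices preserves this upper bandwidth, so ${}_i\omega^{(a)}$ has upper bandwidth at most $2$. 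Independently, \eqref{relconmat:io} gives ${}_i\omega^{(a)\intercal}=b_i^{(a)}\,{}_i\Omega^{(a)}$, and ${}_i\Omega^{(a)}=S({}_i\Theta^{(a)}S)^{-1}$ is a product of lower unitriangular matrices, hence lower unitriangular. Therefore ${}_i\omega^{(a)}$ is upper triangular with $b_i^{(a)}I$ on the main diagonal. Combining both bounds pins ${}_i\omega^{(a)}$ down to exactly three possibly non-zero diagonals, as in \eqref{est:io_0}, and transposing immediately yields \eqref{est:iO_0}.

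To extract the coefficients I would expand each representation. The $\Lambda^2$ piece of ${}_i\omega^{(a)}$ can only arise from the leading identity parts of $({}_i\Theta^{(a)}\tilde S)$ and $\tilde S^{-1}$ combined with the $\Lambda^2 I^{(a)}$ part of $\Lambda^{(a)}$; using $\Lambda^2 D=(\mathfrak a_-^2 D)\Lambda^2$ and $\mathfrak a_-^2 I^{(a)}=I^{(a)}$ collapses it to $({}_i\omega^{(a)})^{[2]}=({}_i\Theta^{(a)}H)^{-1}I^{(a)}\mathfrak a_-^2 H$. For $({}_i\omega^{(a)})^{[1]}$ exactly two cross terms contribute, namely $\Lambda^\intercal({}_i\Theta^{(a)}\tilde S^{[1]})\,\Lambda^2 I^{(a)}$ and $\Lambda^2 I^{(a)}\,\Lambda^\intercal\tilde S^{[-1]}$; using $I^{(a)}\Lambda^\intercal=\Lambda^\intercal I^{(\pi(a))}$, $\Lambda^\intercal\Lambda^2=\Lambda$, $D\Lambda^2=\Lambda^2\mathfrak a_+^2 D$, $\tilde S^{[-1]}=-\tilde S^{[1]}$, and the surrounding diagonal factors, they combine into the second claimed expression for $({}_i\omega^{(a)})^{[1]}$. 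The alternative $S$-based form is recovered by expanding ${}_i\Omega^{(a)}=S({}_i\Theta^{(a)}S)^{-1}$ to first order in $\Lambda^\intercal$, obtaining $({}_i\Omega^{(a)})^{[1]}=S^{[1]}-{}_i\Theta^{(a)}S^{[1]}$, and transposing via \eqref{relconmat:io}.

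The statements for $\omega_j$ and $\Omega_j$ follow by the same scheme: $\Omega_j=S(\Theta_j S)^{-1}$ is lower unitriangular, the middle factor $\Lambda^2+(c_j-1)I$ in \eqref{def:oi} has upper bandwidth $2$, and the bandwidth/transposition argument again isolates three diagonals, with the coefficient extraction now simpler because no $I^{(a)}$ and no parity permutation $\pi$ appear. The main obstacle throughout is purely the bookkeeping: the systematic use of $\Lambda D=(\mathfrak a_- D)\Lambda$, $D\Lambda=\Lambda\mathfrak a_+ D$, $\Lambda^\intercal D=\mathfrak a_+ D\Lambda^\intercal$, $D\Lambda^\intercal=\Lambda^\intercal\mathfrak a_- D$, and $\mathfrak a_- I^{(a)}=I^{(\pi(a))}$, so that each expansion path turns into a pure product of diagonal shifts times the correct power of $\Lambda$.
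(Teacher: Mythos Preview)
Your proposal is correct and follows essentially the same route as the paper: expand the two definitions \eqref{def:io}--\eqref{def:iO} in powers of $\Lambda^\intercal$, invoke the transpose relation \eqref{relconmat:io} to pin down the band structure, and read off the diagonal coefficients. Your write-up is in fact more explicit than the paper's, which simply records the two expansions and says ``using \eqref{relconmat:io} we get the result''; your separation into a bandwidth argument (upper bandwidth $2$ from the $\tilde S$-side, lower unitriangularity from the $S$-side) followed by coefficient extraction makes the logic clearer without changing the substance.
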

\begin{proof}
Above relations  are found using  equations \eqref{relconmat:io},  \eqref{def:io} and \eqref{def:iO}. 
Let us show only the cases  ${}_i\omega^{(1)}$ and ${}_i\Omega^{(1)}$, as  the others cases are proven similarly.
Departing from Equations \eqref{def:io} and \eqref{def:iO} we see that 
\begin{align*}
	{}_i\omega^{(a)}&= ({}_i\Theta^{(a)}H)^{-1}{}_i\Theta^{(a)}(I+\Lambda^\intercal\tilde{S}^{[1]}+(\Lambda^\intercal)^2\tilde{S}^{[2]}+\cdots)(I^{(a)}\Lambda^2+b_i^{(a)})(I+\Lambda^\intercal\tilde{S}^{[-1]}+(\Lambda^\intercal)^2\tilde{S}^{[-2]}+\cdots)H ,\\
{}_i\Omega^{(a)}&=(I+\Lambda^\intercal S^{[1]}+\cdots){}_i\Theta^{(a)}(I+\Lambda^\intercal S^{[-1]}+\cdots),
\end{align*}
 and using \eqref{relconmat:io} we get the result.

\end{proof}
\begin{teo} Vectors of polynomials $A^{(a)}$, $a\in\{1,2\}$, and $B$ fulfill the following connection formulas 
\begin{align}\label{con:io1} 
	_i\omega^{(a)}A^{(a)}(z)&=(z+b_i^{(a)})\big(_i\Theta^{(a)}A^{(a)}(z)\big), \\\label{con:iO1} _i
	\Omega^{(1)} {}_i\Theta^{(a)}B(z)&=B(z), \\ \label{con:oi1}
	 \omega_jA^{(a)}(z)&=(z+c_i-1)\big(\Theta_i A^{(a)}(z)\big), \\ \label{con:Oi1}
	  \Omega_j\Theta_jB(z)&=B(z), 
  \end{align}
with $i\in\{1,\dots,M^{(a)}\}$, $a\in\{1,2\}$,  and $j\in\{1,\dots,N\}$.
\end{teo}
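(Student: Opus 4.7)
The plan is to derive each of the four identities by pure substitution of the definitions of the connection matrices, the type II and type I polynomial vectors $B=SX$ and $A^{(a)}=H^{-1}\tilde S X^{(a)}$, and the eigenvalue relations for the shift matrices acting on the monomial vectors. The crucial, and almost the only, non-trivial observation is that the shift operators ${}_i\Theta^{(a)}$ and $\Theta_j$ leave the monomial vectors $X(z)$ and $X^{(a)}(z)$ invariant, since these do not depend on the hypergeometric parameters; the derivation is then a telescoping cancellation. I do not anticipate a genuine obstacle here—everything has been set up precisely so that these formulas become tautological once the definitions are unpacked.

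For \eqref{con:io1}, I would start from the definition \eqref{def:io} of ${}_i\omega^{(a)}$ and compute
\begin{align*}
{}_i\omega^{(a)} A^{(a)}(z)
&= ({}_i\Theta^{(a)}H)^{-1}({}_i\Theta^{(a)}\tilde S)(\Lambda^{(a)}+b_i^{(a)})\tilde S^{-1}H\cdot H^{-1}\tilde S X^{(a)}(z) \\
&= ({}_i\Theta^{(a)}H)^{-1}({}_i\Theta^{(a)}\tilde S)(\Lambda^{(a)}+b_i^{(a)})X^{(a)}(z),
\end{align*}
after which I invoke $\Lambda^{(a)}X^{(a)}(z)=zX^{(a)}(z)$ to pull out the scalar factor $z+b_i^{(a)}$, and use that ${}_i\Theta^{(a)}$ commutes with $X^{(a)}(z)$ to recognise the remaining factor as ${}_i\Theta^{(a)}(H^{-1}\tilde S X^{(a)})(z)=({}_i\Theta^{(a)}A^{(a)})(z)$. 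For \eqref{con:iO1} I would simply substitute $B(z)=SX(z)$ and the definition \eqref{def:iO} of ${}_i\Omega^{(a)}$ and observe that ${}_i\Theta^{(a)}$ acts trivially on $X(z)$, so $({}_i\Theta^{(a)}B)(z)=({}_i\Theta^{(a)}S)X(z)$ and the factors $S({}_i\Theta^{(a)}S)^{-1}({}_i\Theta^{(a)}S)$ collapse to $S$.

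For the two formulas involving $c_j$ the argument is identical in spirit, but it requires the auxiliary identity $\Lambda^2 X^{(a)}(z)=zX^{(a)}(z)$, which I would verify once by direct inspection of the definitions of $\Lambda$ and $X^{(a)}$ (equivalently from $\Lambda^{(a)}=\Lambda^2 I^{(a)}$ together with $I^{(a)}X^{(a)}(z)=X^{(a)}(z)$). With that in hand, the chain
\begin{align*}
\omega_j A^{(a)}(z)
&= (\Theta_jH)^{-1}(\Theta_j\tilde S)(\Lambda^2+c_j-1)\tilde S^{-1}H\cdot H^{-1}\tilde S X^{(a)}(z) \\
&= (\Theta_jH)^{-1}(\Theta_j\tilde S)(z+c_j-1)X^{(a)}(z) \\
&= (z+c_j-1)(\Theta_j A^{(a)})(z)
\end{align*}
establishes \eqref{con:oi1}, while \eqref{con:Oi1} follows exactly as in the ${}_i\Omega^{(a)}$ case by substituting $B(z)=SX(z)$ into the definition \eqref{def:Oi} of $\Omega_j$ and noting $\Theta_jX(z)=X(z)$. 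In summary, there is no real obstacle: the four identities are algebraic consequences of how the connection matrices were defined, and the work reduces to recording the eigenvalues of $\Lambda^{(a)}$ and $\Lambda^2$ on the appropriate monomial vectors and the parameter-independence of $X$ and $X^{(a)}$.
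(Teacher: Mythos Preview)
Your proposal is correct and follows essentially the same approach as the paper: substitute the definitions of the connection matrices and of $A^{(a)}=H^{-1}\tilde S X^{(a)}$, $B=SX$, cancel the telescoping factors, and use the eigenvalue relations $\Lambda^{(a)}X^{(a)}(z)=zX^{(a)}(z)$ (and hence $\Lambda^2 X^{(a)}(z)=zX^{(a)}(z)$) together with the parameter-independence of the monomial vectors. In fact the paper only writes out the first case explicitly and declares the rest analogous, so your treatment is somewhat more complete.
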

\begin{proof} Equations \eqref{con:io1}, \eqref{con:iO1} , \eqref{con:oi1} and \eqref{con:Oi1}
can be proved through the action of connection matrix on vectors $A^{(a)}$ and $B$. 
Let us check \eqref{con:io1}:
\begin{align*}
	{}_i\omega^{(a)}A^{(a)}(z)=({}_i\Theta^{(a)}H)^{-1}({}_i\Theta^{(a)}\tilde{S})(\Lambda^{(a)}+b_i^{(a)})X^{(a)}(z)=({}_i\Theta^{(a)}H)^{-1}({}_i\Theta^{(a)}\tilde{S})\big(z+b_i^{(a)}\big)X^{(a)}(z),
\end{align*}
and immediately \eqref{con:io1} is proved.
\end{proof}

\section{Multiple Toda systems and hypergeometric $\tau$-functions}

 \subsection{Continuous case}
 
 Let us now  explore how the hypergeometric discrete multiple orthogonal polynomials leads to solutions, in terms of $\tau$-functions, which are double Wronskians of generalized hypergeometric series, to multiple Toda type systems. 
 
Let us start by introducing the strictly lower triangular matrices
\begin{align*}
	\phi^{(a)}&\coloneq (\vartheta^{(a)}S)S^{-1}, &  	\tilde{\phi}^{(a)}&\coloneq(\vartheta^{(a)}\tilde{S})\tilde{S}^{-1}, & a&\in\{1,2\}.
\end{align*}
For them we have the following two lemmas
\begin{lemma}
	The following relations are fulfilled
	\begin{align}\label{derP:1}
		\vartheta^{(a)}B&=\phi^{(a)}B, & a&\in\{1,2\}.
	\end{align}
\end{lemma}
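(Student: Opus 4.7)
The plan is to differentiate the defining relation $B = SX$ with respect to the logarithmic variable $\vartheta^{(a)} = \eta^{(a)} \partial/\partial \eta^{(a)}$ and recognize the result.

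First, I would note that the vector of monomials $X = \begin{bNiceMatrix} 1 & x & x^2 & \Cdots \end{bNiceMatrix}^\intercal$ depends only on the spectral variable $x$ and not on the deformation parameters $\eta^{(1)}, \eta^{(2)}$. Therefore $\vartheta^{(a)} X = 0$, and the Leibniz rule yields
\begin{align*}
\vartheta^{(a)} B = \vartheta^{(a)}(S X) = (\vartheta^{(a)} S) X + S(\vartheta^{(a)} X) = (\vartheta^{(a)} S) X.
\end{align*}

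Second, inserting $S^{-1} S = I$ between $\vartheta^{(a)} S$ and $X$ and recalling the definition $\phi^{(a)} \coloneq (\vartheta^{(a)} S) S^{-1}$, I obtain
\begin{align*}
\vartheta^{(a)} B = (\vartheta^{(a)} S) S^{-1} (S X) = \phi^{(a)} B,
\end{align*}
which is the claimed identity.

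Finally, to confirm the strictly lower triangular structure asserted for $\phi^{(a)}$, I would observe that $S$ is lower unitriangular, so all its diagonal entries are the constant $1$ and all entries above the diagonal vanish. Consequently $\vartheta^{(a)} S$ has zero diagonal and vanishes above the diagonal, i.e.\ it is strictly lower triangular. Since $S^{-1}$ is lower unitriangular, the product $(\vartheta^{(a)} S) S^{-1}$ remains strictly lower triangular, matching the stated definition. There is no real obstacle here: the argument is a one-line application of the product rule together with the parameter-independence of $X$.
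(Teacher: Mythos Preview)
Your proof is correct and follows essentially the same approach as the paper: differentiate $B=SX$, use that $X$ is independent of $\eta^{(a)}$, and insert $S^{-1}S$ to recognize $\phi^{(a)}$. Your additional remarks on why $\vartheta^{(a)}X=0$ and why $\phi^{(a)}$ is strictly lower triangular are helpful elaborations but go beyond what the paper records.
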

\begin{proof}
	From $B=SX$ we get
	\begin{align*} 
		\vartheta^{(a)} B=(\vartheta^{(a)}S)X=(\vartheta^{(a)}S)S^{-1}SX=(\vartheta^{(a)}S)S^{-1}B.  \end{align*}
\end{proof}
\begin{lemma}
	The following relations are fulfilled
	\begin{align}
		\vartheta^{(a)}(HA^{(a)})&=\tilde{\phi}^{(a)}HA^{(a)}, & a&\in\{1,2\}.
	\end{align}
\end{lemma}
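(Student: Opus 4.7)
The plan is to mirror the computation used for the previous lemma $\vartheta^{(a)}B=\phi^{(a)}B$, exploiting the fact that $HA^{(a)}=\tilde S X^{(a)}$ is a perfect structural analog of $B=SX$. The key to formulating the statement with $HA^{(a)}$ on the left-hand side, rather than $A^{(a)}$ alone, is precisely that multiplying by $H$ cleans up the factor $H^{-1}$ in the definition $A^{(a)}\coloneq H^{-1}\tilde S X^{(a)}$ and leaves behind a one-liner.

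First I would rewrite the left-hand side using the definition $A^{(a)}\coloneq H^{-1}\tilde S X^{(a)}$, obtaining $HA^{(a)}=\tilde S X^{(a)}$. Since the monomial vector $X^{(a)}$ has no dependence on $\eta^{(a)}$ (its nonzero entries are pure powers of $x$), applying the logarithmic derivative $\vartheta^{(a)}=\eta^{(a)}\partial/\partial\eta^{(a)}$ yields
\begin{align*}
\vartheta^{(a)}(HA^{(a)})=\vartheta^{(a)}(\tilde S X^{(a)})=(\vartheta^{(a)}\tilde S)\,X^{(a)}.
\end{align*}
Next, inserting $\tilde S^{-1}\tilde S=I$ and invoking the definition $\tilde\phi^{(a)}\coloneq(\vartheta^{(a)}\tilde S)\tilde S^{-1}$ gives
\begin{align*}
(\vartheta^{(a)}\tilde S)X^{(a)}=\bigl((\vartheta^{(a)}\tilde S)\tilde S^{-1}\bigr)\tilde S X^{(a)}=\tilde\phi^{(a)}\,HA^{(a)},
\end{align*}
which is the required identity.

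I do not anticipate any substantive obstacle, since the calculation is purely algebraic and strictly parallel to the preceding lemma. The only point worth flagging is that $H$ genuinely does depend on $\eta^{(a)}$ (through the moments and hence through the Gauss--Borel factorization), so it would be incorrect to try to ``pull $H$ through'' $\vartheta^{(a)}$. Keeping $H$ glued to $A^{(a)}$ and differentiating the product $\tilde S X^{(a)}$ instead sidesteps this issue entirely and makes the argument as short as the one for $B$.
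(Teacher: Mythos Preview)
Your proof is correct and follows exactly the same approach as the paper: write $HA^{(a)}=\tilde S X^{(a)}$, differentiate using that $X^{(a)}$ is independent of $\eta^{(a)}$, and insert $\tilde S^{-1}\tilde S$ to recover $\tilde\phi^{(a)}$. Your write-up is in fact cleaner than the paper's one-line version, which contains minor typographical slips ($\vartheta$ and $\tilde\phi$ in place of $\vartheta^{(a)}$ and $\tilde\phi^{(a)}$).
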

\begin{proof}
	We have that
	\begin{align*}
		\vartheta^{(a)}(HA^{(a)})=\vartheta(\tilde{S})X^{(a)}=\tilde{\phi}HA^{(a)}.
	\end{align*}
\end{proof}
\begin{pro}\label{pro:alphabetagamma}
	The following equations are fulfilled
	\begin{align}\label{satow1}
		\vartheta^{(a)}H-\phi^{(a)}H-
		H{{}\tilde\phi^{(a)}}^\intercal&={T^{(a)}}^\intercal H, & a&\in\{1,2\}.
		%\label{satow2}(\vartheta^{(2)})_kH-\phi^{(2)}_kH-H(\tilde{\phi}^{(2)}_k)^\intercal=(J_1^\intercal)^kH ,
	\end{align}
\end{pro}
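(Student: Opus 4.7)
The plan is to apply the logarithmic derivative $\vartheta^{(a)}$ to the Gauss--Borel factorization $\mathscr M = S^{-1} H \tilde S^{-\intercal}$ and then exploit Proposition \ref{pro:M}, which gives $\vartheta^{(a)} \mathscr M = \mathscr M (\Lambda^{(a)})^\intercal$. First I would use the Leibniz rule together with the standard identities
\begin{align*}
\vartheta^{(a)} S^{-1} &= -S^{-1}(\vartheta^{(a)} S)S^{-1} = -S^{-1}\phi^{(a)}, &
\vartheta^{(a)} \tilde S^{-\intercal} &= -(\tilde\phi^{(a)})^\intercal \tilde S^{-\intercal},
\end{align*}
which follow directly from the definitions of $\phi^{(a)}$ and $\tilde\phi^{(a)}$.

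Substituting these into $\vartheta^{(a)}(S^{-1} H \tilde S^{-\intercal}) = S^{-1} H \tilde S^{-\intercal} (\Lambda^{(a)})^\intercal$ and conjugating on the left by $S$ and on the right by $\tilde S^\intercal$, I would obtain
\begin{align*}
\vartheta^{(a)} H - \phi^{(a)} H - H (\tilde\phi^{(a)})^\intercal = H \tilde S^{-\intercal} (\Lambda^{(a)})^\intercal \tilde S^\intercal.
\end{align*}
The final step is then to recognise the right-hand side as $(T^{(a)})^\intercal H$. This is immediate from the definition $T^{(a)} = H^{-1} \tilde S \Lambda^{(a)} \tilde S^{-1} H$, since transposition yields $(T^{(a)})^\intercal = H \tilde S^{-\intercal} (\Lambda^{(a)})^\intercal \tilde S^\intercal H^{-1}$, and multiplying by $H$ on the right cancels the trailing $H^{-1}$.

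There is no real obstacle here: the computation is essentially a routine logarithmic-derivative manipulation of the $LU$-factorisation, and the only thing one must be slightly careful about is the transposition when moving $\vartheta^{(a)}$ across $\tilde S^{-\intercal}$. The cleanest organisation is simply a single chain of equalities from $\vartheta^{(a)} \mathscr M = \mathscr M (\Lambda^{(a)})^\intercal$ to the stated identity, invoking the preceding two lemmas only implicitly through the definitions of $\phi^{(a)}$ and $\tilde\phi^{(a)}$.
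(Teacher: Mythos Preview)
Your proposal is correct and follows essentially the same approach as the paper: differentiate the Gauss--Borel factorization $\mathscr M=S^{-1}H\tilde S^{-\intercal}$, invoke Proposition~\ref{pro:M} to replace $\vartheta^{(a)}\mathscr M$ by $\mathscr M(\Lambda^{(a)})^\intercal$, then conjugate by $S$ and $\tilde S^\intercal$ and identify the right-hand side with $(T^{(a)})^\intercal H$ via the definition of $T^{(a)}$. Your write-up is in fact slightly more explicit than the paper's in recording the intermediate identities $\vartheta^{(a)}S^{-1}=-S^{-1}\phi^{(a)}$ and $\vartheta^{(a)}\tilde S^{-\intercal}=-(\tilde\phi^{(a)})^\intercal\tilde S^{-\intercal}$, but the argument is the same.
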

\begin{proof}Recall that $\mathscr M=S^{-1}H\tilde{S}^{-\intercal}$ so that
	\begin{align*}\vartheta^{(a)}(S^{-1}H\tilde{S}^{-\intercal})=-S^{-1}(\vartheta^{(a)}S)S^{-1}H\tilde{S}^{-\intercal}
		+S^{-1}(\vartheta^{(a)}H)\tilde{S}^{-\intercal}-S^{-1}H\tilde{S}^{-\intercal}(\vartheta^{(a)}\tilde{S})^{\intercal}\tilde{S}^{-\intercal}
		=S^{-1}H\tilde{S}^{-\intercal}{\Lambda^{(a)}}^\intercal \end{align*} 
	so
	\begin{align*} \vartheta^{(a)}H-(\vartheta^{(a)}S)S^{-1}H-H\tilde{S}^{-\intercal}(\vartheta^{(a)}\tilde{S})^\intercal=H\tilde{S}^{-\intercal}
		{\Lambda^{(a)}}^\intercal\tilde{S}^\intercal H^{-1}H={T^{(a)}}^\intercal H. \end{align*}
\end{proof}
We will denote:
\begin{align*}\phi&\coloneq \phi^{(1)}+\phi^{(2)}, &\tilde{\phi}&\coloneq \tilde{\phi}^{(1)}+\tilde{\phi}^{(2)}.\end{align*}
\begin{pro}
	The following equations hold
\begin{subequations}
		\begin{align}\label{toda:dprin}(\vartheta H)H^{-1}&=\alpha, \\ \label{toda:sub}
		-\phi&=(\Lambda^\intercal)^2\gamma+\Lambda^\intercal\beta,\\ \label{toda:super}
		-\tilde \phi &=\Lambda^\intercal \mathfrak a_- H H^{-1} 
	\end{align}
\end{subequations}
	
\end{pro}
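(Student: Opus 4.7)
The plan is to sum the two identities $\vartheta^{(a)}H-\phi^{(a)}H-H\bigl(\tilde\phi^{(a)}\bigr)^\intercal={T^{(a)}}^\intercal H$ of Proposition~\ref{pro:alphabetagamma} over $a\in\{1,2\}$. Using the additive definitions $\vartheta=\vartheta^{(1)}+\vartheta^{(2)}$, $\phi=\phi^{(1)}+\phi^{(2)}$, $\tilde\phi=\tilde\phi^{(1)}+\tilde\phi^{(2)}$, together with the identity $T^\intercal=T^{(1)}+T^{(2)}$ (equivalently $T=\bigl(T^{(1)}\bigr)^\intercal+\bigl(T^{(2)}\bigr)^\intercal$) recorded in the preliminaries, the summation collapses to the single matrix identity
\begin{align*}
\vartheta H-\phi H-H\tilde\phi^\intercal = T H.
\end{align*}

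Next I would substitute the tetradiagonal form $T=(\Lambda^\intercal)^2\gamma+\Lambda^\intercal\beta+\alpha+\Lambda$ from \eqref{eq:T_rec} and split the identity by triangular support. Since $S$ and $\tilde S$ are lower unitriangular, both $\phi=(\vartheta S)S^{-1}$ and $\tilde\phi=(\vartheta\tilde S)\tilde S^{-1}$ are strictly lower triangular, so $\phi H$ is strictly lower triangular, $H\tilde\phi^\intercal$ is strictly upper triangular, and $\vartheta H$ is diagonal. On the right-hand side, $\alpha H$ is diagonal, $(\Lambda^\intercal)^2\gamma H+\Lambda^\intercal\beta H$ is supported on the first two subdiagonals, and $\Lambda H$ is supported on the first superdiagonal. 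The diagonal comparison $\vartheta H=\alpha H$ gives \eqref{toda:dprin} after right-multiplication by $H^{-1}$; the strictly lower triangular comparison $-\phi H=(\Lambda^\intercal)^2\gamma H+\Lambda^\intercal\beta H$ yields \eqref{toda:sub} in the same way and, as a by-product, shows that $\phi$ is in fact supported on only two subdiagonals, which is not obvious from its definition.

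For the strictly upper triangular component I would take $-H\tilde\phi^\intercal=\Lambda H$ and transpose both sides to obtain $-\tilde\phi H=H\Lambda^\intercal$. Invoking the braiding rule $D\Lambda^\intercal=\Lambda^\intercal\mathfrak a_- D$ (valid for any diagonal $D$) with $D=H$, and then right-multiplying by $H^{-1}$, produces $-\tilde\phi=\Lambda^\intercal(\mathfrak a_- H)H^{-1}$, which is \eqref{toda:super}. I do not anticipate any substantial obstacle: once Proposition~\ref{pro:alphabetagamma} is summed over the two weights, the argument is a clean diagonal/lower/upper triangular reading of a single identity combined with the $H$--$\Lambda^\intercal$ commutation rule already established in the preliminaries.
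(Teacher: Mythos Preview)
Your proposal is correct and follows essentially the same approach as the paper: sum Proposition~\ref{pro:alphabetagamma} over $a\in\{1,2\}$ to obtain $\vartheta H-\phi H-H\tilde\phi^\intercal=TH$, expand $T$ via \eqref{eq:T_rec}, and read off diagonal, strictly lower, and strictly upper parts. The paper handles the superdiagonal by rewriting $\Lambda H=(\mathfrak a_-H)\Lambda$ before comparison, while you transpose first and then apply $H\Lambda^\intercal=\Lambda^\intercal\mathfrak a_-H$; these are the same commutation rule and yield the same conclusion.
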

\begin{proof}
	From \eqref{satow1} we obtain
	\begin{align*}
		\vartheta H-\phi H-H\tilde{\phi}^\intercal=TH=(\Lambda^\intercal)^2\gamma H+\Lambda^\intercal\beta H+\alpha H+\mathfrak a_{-}H\Lambda 
	\end{align*}
	Diagonal by diagonal we get the equations.
	% \begin{align*}\underbrace{\vartheta H}_{\text{Diag. Princ.}}-\underbrace{\phi H}_{\text{Estr.Diag.Inf.}}-\underbrace{H\tilde{\phi}^\intercal}_{\text{Estr. Diag. Sup.}}=JH=(\Lambda^\intercal)^2\gamma H+\Lambda^\intercal\beta H+\alpha H+\mathfrak a_{-}H\Lambda \end{align*}
	%\eqref{toda:dprin} can be found equating principal diagonals and \eqref{toda:sub} can be found equating strictly inferior parts.
\end{proof}
\begin{pro}
	The following relations are fulfilled
\begin{subequations}\label{todas}
		\begin{align}
		\label{toda:Sn} \vartheta S^{[n]}&=-(S^{[n-2]}\mathfrak a_{-}^{n-2}\gamma+S^{[n-1]}\mathfrak a_{-}^{n-1}\beta),\\
		\label{toda:S1}\vartheta S^{[1]}&=-\beta,\\
		\label{toda:tildeSn}\vartheta \tilde{S}^{[n]}&=-\mathfrak a_{-}^nH\mathfrak a_{-}^{n-1}H^{-1}\tilde{S}^{[n-1]},\\
		\label{toda:tildeS1}\vartheta \tilde{S}^{[1]}&=-H^{-1}\mathfrak a_{-}H.
	\end{align}
\end{subequations}
\end{pro}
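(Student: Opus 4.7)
The plan is to derive the four relations by expanding the matrix identities $\vartheta S = \phi S$ and $\vartheta \tilde S = \tilde\phi \tilde S$ (which follow immediately from the definitions $\phi = (\vartheta S)S^{-1}$, $\tilde\phi = (\vartheta \tilde S)\tilde S^{-1}$) into formal power series in $\Lambda^\intercal$, and matching coefficients diagonal by diagonal. The substitutions for $\phi$ and $\tilde\phi$ come from the previous proposition, namely $-\phi = (\Lambda^\intercal)^2\gamma + \Lambda^\intercal\beta$ and $-\tilde\phi = \Lambda^\intercal (\mathfrak a_-H)H^{-1}$.

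First I would insert the expansions $S = I + \sum_{k\geq 1}(\Lambda^\intercal)^k S^{[k]}$ and $\tilde S = I + \sum_{k\geq 1}(\Lambda^\intercal)^k\tilde S^{[k]}$ into the identities above and use the fundamental commutation rule $D\Lambda^\intercal = \Lambda^\intercal\mathfrak a_- D$ iteratively, which yields $D(\Lambda^\intercal)^k = (\Lambda^\intercal)^k\mathfrak a_-^k D$ for any diagonal matrix $D$. For the $S$ equation this transforms
\begin{align*}
\vartheta S &= -\bigl((\Lambda^\intercal)^2\gamma + \Lambda^\intercal\beta\bigr)\sum_{k\geq 0}(\Lambda^\intercal)^k S^{[k]}
= -\sum_{k\geq 0}\bigl((\Lambda^\intercal)^{k+2}\mathfrak a_-^k\gamma + (\Lambda^\intercal)^{k+1}\mathfrak a_-^k\beta\bigr)S^{[k]},
\end{align*}
so that equating the coefficient of $(\Lambda^\intercal)^n$ on both sides with $\vartheta S^{[n]}$ and invoking commutativity of diagonal matrices yields \eqref{toda:Sn}. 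The case $n=1$ reduces to \eqref{toda:S1} since the $\gamma$ contribution is absent (no $k=-1$ term).

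For the $\tilde S$ equations I would proceed identically, writing
\begin{align*}
\vartheta\tilde S &= -\Lambda^\intercal(\mathfrak a_-H)H^{-1}\sum_{k\geq 0}(\Lambda^\intercal)^k\tilde S^{[k]} = -\sum_{k\geq 0}(\Lambda^\intercal)^{k+1}\mathfrak a_-^k\bigl((\mathfrak a_-H)H^{-1}\bigr)\tilde S^{[k]},
\end{align*}
and using the homomorphism property $\mathfrak a_-^{n-1}\bigl((\mathfrak a_-H)H^{-1}\bigr) = (\mathfrak a_-^n H)(\mathfrak a_-^{n-1}H^{-1})$ to rewrite the coefficient of $(\Lambda^\intercal)^n$, yielding \eqref{toda:tildeSn}. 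Again the case $n=1$ collapses to \eqref{toda:tildeS1}, noting $\tilde S^{[0]} = I$ and that diagonal factors commute.

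The only delicate point is bookkeeping: making sure the shift $\mathfrak a_-^k$ appears at exactly the right power when one slides a diagonal matrix past $(\Lambda^\intercal)^k$, and checking that the homomorphism property of $\mathfrak a_-$ on products of diagonal matrices is used correctly to rearrange $\mathfrak a_-^{n-1}\bigl((\mathfrak a_-H)H^{-1}\bigr)$ into the form stated in \eqref{toda:tildeSn}. Once this bookkeeping is done carefully, the proof is just matching coefficients.
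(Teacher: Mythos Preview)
Your proposal is correct and follows essentially the same approach as the paper's own proof: rewrite $\phi=(\vartheta S)S^{-1}$ and $\tilde\phi=(\vartheta\tilde S)\tilde S^{-1}$ as $\vartheta S=-\bigl((\Lambda^\intercal)^2\gamma+\Lambda^\intercal\beta\bigr)S$ and $\vartheta\tilde S=-\Lambda^\intercal(\mathfrak a_-H)H^{-1}\tilde S$ using \eqref{toda:sub} and \eqref{toda:super}, then equate diagonal by diagonal. The paper states this in one line, while you make the bookkeeping with the rule $D(\Lambda^\intercal)^k=(\Lambda^\intercal)^k\mathfrak a_-^kD$ and the commutativity of diagonal matrices explicit, which is a welcome clarification but not a different argument.
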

\begin{proof}
	Relations \eqref{toda:Sn} and \eqref{toda:S1} can be obtained from \eqref{toda:sub} if we consider the equation in the form $\vartheta S=-((\Lambda^\top)^2\gamma+\Lambda^\top)S$ equating by diagonals.
	Analogously, \eqref{toda:tildeS1} and \eqref{toda:tildeSn} con be obtained from \eqref{toda:super}.
\end{proof}
\begin{pro}
	For the step-line recursion coefficients the following  expressions hold
	\begin{align}\label{eq:alphabetagamma2}
		\left\{\begin{aligned}
			\alpha=	(\vartheta H )H^{-1}&= \mathfrak a_{+}S^{[1]}-S^{[1]},\\
			\beta=-\vartheta S^{[1]}&=	\mathfrak a_{+}S^{[2]}-S^{[2]}-S^{[1]}\mathfrak a_{-}\big((\vartheta H )H^{-1}\big), \\ 
			\gamma=-\vartheta S^{[2]}+(\vartheta \mathfrak a_{-}S^{[1]})S^{[1]}&=H^{-1}\mathfrak a^2_{-}H,
		\end{aligned}\right.
	\end{align}
\end{pro}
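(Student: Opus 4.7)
The plan is to assemble the three lines of \eqref{eq:alphabetagamma2} by combining the algebraic characterizations of $\alpha,\beta,\gamma$ already established in \eqref{eq:alphabetagamma} with the Toda-type relations \eqref{toda:dprin}, \eqref{toda:S1}, and \eqref{toda:Sn} just derived in this subsection. Nothing more is needed beyond careful bookkeeping of the diagonal matrix identities.

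For the first line, the equality $\alpha=\mathfrak a_+S^{[1]}-S^{[1]}$ is precisely the first block of \eqref{eq:alphabetagamma}, while $\alpha=(\vartheta H)H^{-1}$ is exactly the diagonal equation \eqref{toda:dprin}. For the second line, the identification $\beta=-\vartheta S^{[1]}$ is a direct restatement of \eqref{toda:S1}, and the third expression $\mathfrak a_+S^{[2]}-S^{[2]}-S^{[1]}\mathfrak a_-((\vartheta H)H^{-1})$ is obtained from the expression $\beta=\mathfrak a_+S^{[2]}-S^{[2]}-S^{[1]}\mathfrak a_-\alpha$ in \eqref{eq:alphabetagamma} by substituting $\alpha=(\vartheta H)H^{-1}$ from the first line.

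The third line requires a small computation. The equality $\gamma=H^{-1}\mathfrak a_-^2H$ is in \eqref{eq:alphabetagamma}. To match with $-\vartheta S^{[2]}+(\vartheta\mathfrak a_-S^{[1]})S^{[1]}$, I would specialize \eqref{toda:Sn} at $n=2$, which gives $\vartheta S^{[2]}=-\gamma-S^{[1]}\mathfrak a_-\beta$, hence $\gamma=-\vartheta S^{[2]}-S^{[1]}\mathfrak a_-\beta$. Using $\beta=-\vartheta S^{[1]}$ and the fact that the diagonal shift $\mathfrak a_-$ commutes with the parameter derivative $\vartheta$, one rewrites $-S^{[1]}\mathfrak a_-\beta=S^{[1]}(\vartheta\mathfrak a_-S^{[1]})$. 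Since $S^{[1]}$ and $\mathfrak a_-S^{[1]}$ are both diagonal matrices they commute, so this equals $(\vartheta\mathfrak a_-S^{[1]})S^{[1]}$, as required.

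The whole argument is essentially a substitution, so I do not expect any conceptual obstacle; the only point requiring care is the ordering of factors in $-S^{[1]}\mathfrak a_-\beta$, which relies on two elementary facts: that $\vartheta$ and $\mathfrak a_-$ commute (they act on disjoint ingredients, parameters versus diagonal indices), and that diagonal matrices commute with one another. Once these are acknowledged, the three displayed equalities follow line by line.
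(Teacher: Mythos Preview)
Your proof is correct and follows essentially the same approach as the paper, which simply says to combine \eqref{eq:alphabetagamma} with Proposition~\ref{pro:alphabetagamma} and split \eqref{toda:sub} by diagonals. You cite the already-split consequences \eqref{toda:dprin}, \eqref{toda:S1}, \eqref{toda:Sn} rather than redoing the diagonal-by-diagonal extraction, and you spell out the commutation of $\vartheta$ with $\mathfrak a_-$ and of diagonal matrices needed for the $\gamma$ line, but the underlying logic is identical.
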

\begin{proof}
	It is proven using Equations \eqref{eq:alphabetagamma} and Proposition \ref{pro:alphabetagamma}, just split \eqref{toda:sub} by diagonals.
\end{proof}

\begin{coro}
	In terms of tau functions the entries in recursion Hessenberg matrix can be expressed as follows
	\begin{align*}
		\alpha_n&=\vartheta \log\frac{\tau_{n+1}}{\tau_n}, &
		\beta_n &=\vartheta^2\log\tau_n, & \gamma_n&=\frac{\tau_{n+3}\tau_n}{\tau_{n+2}\tau_{n+1}}.
	\end{align*}
\end{coro}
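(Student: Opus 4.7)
The plan is to read off the three diagonal entries from Equation \eqref{eq:alphabetagamma2} and translate each expression into the language of $\tau$-functions using Proposition \ref{pro:Hptau}. Since the corollary concerns only diagonal matrices, the argument reduces to entry-wise algebra and index bookkeeping; no new structural input is needed beyond what has already been established in the section.

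First I would handle $\alpha_n$. Starting from $\alpha = (\vartheta H)H^{-1}$, which is a product of diagonal matrices, the $n$-th diagonal entry is $\alpha_n = (\vartheta H_n)/H_n = \vartheta \log H_n$. Substituting the identification $H_n = \tau_{n+1}/\tau_n$ from Proposition \ref{pro:Hptau} immediately yields $\alpha_n = \vartheta \log(\tau_{n+1}/\tau_n)$. As a consistency check one can also verify this from the alternative expression $\alpha = \mathfrak{a}_{+}S^{[1]} - S^{[1]}$ in \eqref{eq:alphabetagamma2} combined with $p^1_n = -\vartheta \log \tau_n$, which produces a telescoping difference that collapses to the same formula.

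Next, for $\beta_n$, I would use $\beta = -\vartheta S^{[1]}$. The diagonal entries of $S^{[1]}$ encode the subleading coefficient $p^1_n$ of $B_n$ via $B = SX$, matched with the indexing convention $\beta = \diag(\beta_1,\beta_2,\ldots)$ chosen in the text. Combining this with $p^1_n = -\vartheta \log \tau_n$ from Proposition \ref{pro:Hptau} gives $\beta_n = -\vartheta(-\vartheta \log \tau_n) = \vartheta^2 \log \tau_n$.

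Finally, for $\gamma_n$, I would rewrite $\gamma = H^{-1}\mathfrak{a}_{-}^2 H$ entry-wise. The lowering action encoded by $\mathfrak{a}_{-}^2$ shifts the diagonal of $H$ by two positions, so, in harmony with the convention $\gamma = \diag(\gamma_2,\gamma_3,\ldots)$, substituting $H_m = \tau_{m+1}/\tau_m$ produces $\gamma_n = (\tau_{n+3}/\tau_{n+2})/(\tau_{n+1}/\tau_n) = \tau_{n+3}\tau_n/(\tau_{n+2}\tau_{n+1})$. The only delicate point in the whole argument is keeping the indexing conventions consistent between $\alpha$, $\beta$, $\gamma$, and $H$; but this bookkeeping is routine once the definitions fixed in \eqref{eq:T_rec}, \eqref{eq:alphabetagamma2}, and Proposition \ref{pro:Hptau} are assembled in the right order.
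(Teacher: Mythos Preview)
Your proposal is correct and follows exactly the route the paper takes: the paper's own proof is the single line ``It follows from Equations \eqref{eq:alphabetagamma2} and \eqref{eq:H_p_tau},'' and you have simply unpacked that citation entry by entry. The only care required is the index alignment you flag at the end, and your handling of it matches the conventions fixed in \eqref{eq:T_rec}.
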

\begin{proof}
	It follows from Equations \eqref{eq:alphabetagamma2} and \eqref{eq:H_p_tau}. 
\end{proof}

\begin{pro}[Multiple Toda equations]
	The functions $q_n\coloneq\log H_n$ and $f_n\coloneq S^{[1]}_n=p^1_{n+1}$  solve the system
	\begin{align}\label{eq:2-toda}
		\left\{\begin{aligned}
			\vartheta q_n&=f_{n-1}-f_n,\\
			\vartheta^2 f_n-(2f_n-f_{n+1}-f_{n-1})\vartheta f_n&=\Exp{q_{n+1}-q_{n-1}}-\Exp{q_{n+2}-q_n}.
		\end{aligned}\right.
	\end{align}
\end{pro}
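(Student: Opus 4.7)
The first equation will be immediate from existing results. The matrix identity $\alpha = \mathfrak{a}_+ S^{[1]} - S^{[1]}$ from \eqref{eq:alphabetagamma2} reads on its $n$-th diagonal as $\alpha_n = f_{n-1} - f_n$ (with $f_{-1} = 0$), while the corollary above identifies the same quantity with $\vartheta q_n = \vartheta\log(\tau_{n+1}/\tau_n)$. Equating these two expressions yields $\vartheta q_n = f_{n-1} - f_n$.

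For the second equation my plan is to eliminate the sub-subdiagonal $S^{[2]}$ between the three relations collected in \eqref{eq:alphabetagamma2}. Writing $s_n \coloneq (S^{[2]})_{n,n}$ and using $\alpha_{n+1} = f_n - f_{n+1}$, these relations translate entry-wise into
\begin{align*}
	\vartheta f_n &= -\beta_n, &
	\beta_n &= s_{n-1} - s_n - f_n(f_n - f_{n+1}), &
	\gamma_n &= -\vartheta s_n + (\vartheta f_{n+1})\,f_n.
\end{align*}
I will then form the difference $\gamma_{n-1} - \gamma_n$ and substitute for $\vartheta(s_{n-1} - s_n)$ by applying $\vartheta$ to the middle identity. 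The $s$-terms should cancel, and after feeding in $\vartheta f_n = -\beta_n$, $\vartheta^2 f_n = -\vartheta\beta_n$, together with the identity $2f_n - f_{n+1} - f_{n-1} = \alpha_{n+1} - \alpha_n$ drawn from the first Toda equation, I expect to arrive at
\[\gamma_{n-1} - \gamma_n = \vartheta^2 f_n - (2f_n - f_{n+1} - f_{n-1})\,\vartheta f_n.\]
It then remains to read off $\gamma_n = H_{n+2}/H_n = \Exp{q_{n+2}-q_n}$ from $\gamma = H^{-1}\mathfrak{a}_-^2 H$, so that the left-hand side is recognised as $\Exp{q_{n+1}-q_{n-1}} - \Exp{q_{n+2}-q_n}$.

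The main obstacle I anticipate is purely notational: the paper uses different starting indices for the scalar coefficients $\alpha_n$, $\beta_n$, $\gamma_n$, so some care is needed to keep the shifts consistent with the $\mathfrak{a}_\pm$ operators when translating between the matrix identities and their scalar form. Once that index-matching is settled, the derivation is essentially forced by \eqref{eq:alphabetagamma2} together with the tau-function identifications $H_n = \tau_{n+1}/\tau_n$ and $p^1_n = -\vartheta\log\tau_n$, and no genuinely new input is required.
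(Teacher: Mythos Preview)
Your proposal is correct and follows essentially the same route as the paper's proof: both eliminate $S^{[2]}$ between the three identities in \eqref{eq:alphabetagamma2} by differentiating the $\beta$-relation and substituting the $\gamma$-relation, the only difference being that the paper carries out the computation at the diagonal-matrix level before passing to entries whereas you work componentwise from the start. Your anticipated index-matching issue is indeed the only place requiring care, and once that is settled the algebra is identical.
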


\begin{proof}
	Using  Equations \eqref{eq:alphabetagamma2} we write
	\begin{align*}
		\vartheta S^{[1]}&=	-\mathfrak a_{+}S^{[2]}+S^{[2]}+S^{[1]}(\vartheta \mathfrak a_{-}H )(\mathfrak a_{-}H)^{-1}, &
		\vartheta S^{[2]}&=(\vartheta \mathfrak a_{-}S^{[1]})S^{[1]}-H^{-1}\mathfrak a^2_{-}H.
	\end{align*}
	Acting with $\vartheta$ on the first equation an using then the second equation  we get
	\begin{align*}
		(\vartheta \mathfrak a_{-}S^{[1]})S^{[1]}-(\vartheta S^{[1]})\mathfrak a_{+}S^{[1]}+\mathfrak a_{+}H^{-1}\mathfrak a_{-}H-H^{-1}\mathfrak a^2_{-}H=\vartheta^2 S^{[1]}-\vartheta\left(S^{[1]}\big(S^{[1]}-\mathfrak a_-S^{[1]}\big)\right)
	\end{align*}

	Then, the equation reads
	\begin{align*}
		f_{n-1}	\vartheta f_n-f_n\vartheta f_{n+1}+\Exp{q_{n+2}-q_n}-\Exp{q_{n+1}-q_{n-1}}&=-\vartheta^2 f_n+(f_n-f_{n+1})\vartheta f_n+f_n(\vartheta f_n-\vartheta f_{n+1})
	\end{align*}
	that after some clearing leads to Equations \eqref{eq:2-toda}.
\end{proof}

\begin{teo} \label{theo:third_order_PDE}
	The $\tau$-function satisfies the following nonlinear third order differential--difference equation
	\begin{align}\label{eq:multi_toda}
		\vartheta^3\tau_{n+1}
		-
		\left(
		\frac{\vartheta\tau_{n+2}}{\tau_{n+2}}+\frac{\vartheta\tau_{n+1}}{\tau_{n+1}}+\frac{\vartheta\tau_{n}}{\tau_{n}}\right)	\vartheta^2\tau_{n+1}+\frac{(\vartheta \tau_{n+1})^2}{\tau_{n+1}}
		\left(\frac{\vartheta\tau_{n+2}}{\tau_{n+2}}-\frac{\vartheta\tau_{n}}{\tau_{n}}\right)=\frac{\tau_{n+3}\tau_{n}}{\tau_{n+2}}-
		\frac{\tau_{n+2}\tau_{n-1}}{\tau_n}.
	\end{align}
\end{teo}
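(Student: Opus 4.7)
The strategy is to recast the multiple Toda system \eqref{eq:2-toda}, already derived in the preceding proposition, in $\tau$-function language. Indeed, \eqref{eq:multi_toda} is the direct translation of the second equation of \eqref{eq:2-toda} once the dictionary provided by Proposition~\ref{pro:Hptau} is invoked. From $H_n = \tau_{n+1}/\tau_n$ and $p^1_n = -\vartheta\log\tau_n$, together with the identifications $q_n = \log H_n$ and $f_n = S^{[1]}_n = p^1_{n+1}$, one immediately obtains
\begin{align*}
q_n &= \log\tau_{n+1}-\log\tau_n, & f_n &= -\vartheta\log\tau_{n+1}.
\end{align*}
Under this substitution the first Toda relation $\vartheta q_n = f_{n-1}-f_n$ is automatic, so only the second equation carries content.

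The next step is to plug the dictionary into the second equation of \eqref{eq:2-toda}. The exponents on its right-hand side telescope as
\begin{align*}
q_{n+1}-q_{n-1}&=\log\frac{\tau_{n+2}\tau_{n-1}}{\tau_{n+1}\tau_n}, & q_{n+2}-q_n&=\log\frac{\tau_{n+3}\tau_n}{\tau_{n+2}\tau_{n+1}},
\end{align*}
so that side becomes $\tau_{n+2}\tau_{n-1}/(\tau_{n+1}\tau_n)-\tau_{n+3}\tau_n/(\tau_{n+2}\tau_{n+1})$, already in the shape demanded by \eqref{eq:multi_toda} after multiplication by $-\tau_{n+1}$. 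The left-hand side translates to a polynomial expression in $\vartheta^k\log\tau_m$ with $k\in\{1,2,3\}$ and $m\in\{n,n+1,n+2\}$.

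The final step is to apply the standard logarithmic-derivative identities
\begin{align*}
\vartheta^2\log\tau &= \frac{\vartheta^2\tau}{\tau}-\left(\frac{\vartheta\tau}{\tau}\right)^2, & \vartheta^3\log\tau &= \frac{\vartheta^3\tau}{\tau}-3\frac{\vartheta\tau\,\vartheta^2\tau}{\tau^2}+2\left(\frac{\vartheta\tau}{\tau}\right)^3,
\end{align*}
and multiply through by $-\tau_{n+1}$. The pure cubics in $\vartheta\tau_{n+1}/\tau_{n+1}$ arising from the expansion of $\vartheta^3\log\tau_{n+1}$ and from the cross product $2\,\vartheta\log\tau_{n+1}\cdot\vartheta^2\log\tau_{n+1}$ cancel identically, while the remaining mixed monomials consolidate into the symmetric coefficient $\vartheta\tau_{n+2}/\tau_{n+2}+\vartheta\tau_{n+1}/\tau_{n+1}+\vartheta\tau_n/\tau_n$ of $\vartheta^2\tau_{n+1}$ together with the advertised prefactor of $(\vartheta\tau_{n+1})^2/\tau_{n+1}$. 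The main obstacle is pure bookkeeping: tracking the cancellations among cubic and quadratic monomials in logarithmic derivatives of $\tau_{n+1}$, $\tau_{n+2}$, $\tau_n$ and verifying the signs in the final coefficient. No structural input beyond \eqref{eq:2-toda} and these chain-rule identities is required.
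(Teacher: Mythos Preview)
Your proposal is correct and follows essentially the same approach as the paper: both substitute the dictionary $q_n=\log(\tau_{n+1}/\tau_n)$, $f_n=-\vartheta\log\tau_{n+1}$ into the second equation of \eqref{eq:2-toda}, observe that the first equation becomes tautological, expand the logarithmic derivatives, and simplify. The paper displays a few more intermediate lines of the algebraic reduction, but the structure and ingredients are identical.
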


\begin{proof}
	Notice that the first equation $\vartheta q_n=f_{n-1}-f_n$, when the $\tau$ function is used, becomes an identity, indeed
	\begin{align*}
		\vartheta q_n&=\vartheta \log \tau_{n+1}-\vartheta \log \tau_n, & f_{n-1}-f_n&=p^1_n-p^1_{n+1}=-\vartheta \log\tau_n+\vartheta\log\tau_{n+1}.
	\end{align*}
	Then, the second equation reads
	\begin{align*}
		-\vartheta^3 \log\tau_{n+1}-\vartheta(2\log\tau_{n+1}-\log\tau_{n+2}-\log\tau_{n})\vartheta^2 \log\tau_{n+1}&=\frac{\tau_{n+2}\tau_{n-1}}{\tau_{n+1}\tau_n}-\frac{\tau_{n+3}\tau_{n}}{\tau_{n+2}\tau_{n+1}},
		%\Exp{q_{n+2}-q_n}.
	\end{align*}
	that expanding the differentiations leads to 
	\begin{align*}
		\frac{2(\vartheta \tau_{n+1})^3}{\tau_{n+1}^3}-3\frac{(\vartheta\tau_{n+1})\vartheta^2\tau_{n+1}}{\tau_{n+1}^2}+\frac{\vartheta^3\tau_{n+1}}{\tau_{n+1}}
		+\left(	-\frac{(\vartheta \tau_{n+1})^2}{\tau_{n+1}^2}+
		\frac{\vartheta^2\tau_{n+1}}{\tau_{n+1}}
		\right)\left(2\frac{\vartheta\tau_{n+1}}{\tau_{n+1}}-\frac{\vartheta\tau_{n+2}}{\tau_{n+2}}-\frac{\vartheta\tau_{n}}{\tau_{n}}\right)=\frac{\tau_{n+3}\tau_{n}}{\tau_{n+2}\tau_{n+1}}-\frac{\tau_{n+2}\tau_{n-1}}{\tau_{n+1}\tau_n},
	\end{align*}
	and after some clearing we get 
	\begin{align*}
		-\frac{(\vartheta\tau_{n+1})\vartheta^2\tau_{n+1}}{\tau_{n+1}^2}+\frac{\vartheta^3\tau_{n+1}}{\tau_{n+1}}
		+\left(	-\frac{(\vartheta \tau_{n+1})^2}{\tau_{n+1}^2}+
		\frac{\vartheta^2\tau_{n+1}}{\tau_{n+1}}
		\right)\left(-\frac{\vartheta\tau_{n+2}}{\tau_{n+2}}-\frac{\vartheta\tau_{n}}{\tau_{n}}\right)=\frac{\tau_{n+3}\tau_{n}}{\tau_{n+2}\tau_{n+1}}-\frac{\tau_{n+2}\tau_{n-1}}{\tau_{n+1}\tau_n},
	\end{align*}
	from where we immediately   find Equation \eqref{eq:multi_toda}.
\end{proof}

%\begin{rem}
%	Notice that   solutions for Equation \eqref{eq:multi_toda} are double Wronskians with respect the derivative $\vartheta=\frac{\d}{\d\rho }$, see Lemma \ref{lem:theta-rho}.
%	\begin{equation*}
%		\tau_n=\mathscr W_n(f_1(e^{\rho+\tilde{\rho}}), f_2(e^{\rho-\tilde{\rho}})).
%	\end{equation*}
%\end{rem}

\begin{rem}
	For the discrete hypergeometric orthogonal polynomials the standard Toda equation, see for example \cite[Equation (168)]{Manas_Fernandez-Irisarri},   can be written in terms of the $\tau$-function as the following nonlinear second order differential--difference equation
	\begin{align*}
		\vartheta^2\tau_{n+1}-\frac{(\vartheta\tau_{n+1})^2}{\tau_{n+1}}=\frac{\tau_{n+2}\tau_{n}}{\tau_{n+1}}.
	\end{align*}
	We clearly see that \eqref{eq:multi_toda} is an extension to the  multiple orthogonal realm  of the standard Toda equation.
\end{rem}

%\begin{align*}
%	\frac{2 \, \frac{\partial}{\partial x}f\left(x\right)^{3}}{f\left(x\right)^{3}} - \frac{3 \, \frac{\partial}{\partial x}f\left(x\right) \frac{\partial^{2}}{(\partial x)^{2}}f\left(x\right)}{f\left(x\right)^{2}} + \frac{\frac{\partial^{3}}{(\partial x)^{3}}f\left(x\right)}{f\left(x\right)},\\
%	-\frac{\frac{\partial}{\partial x}f\left(x\right)^{2}}{f\left(x\right)^{2}} + \frac{\frac{\partial^{2}}{(\partial x)^{2}}f\left(x\right)}{f\left(x\right)}
%\end{align*}

% \begin{rem}If we equalize strictly superdiagonal parts we obtain 

	% This equation we will be useful for future calculations.
	% \end{rem}
Next we write this multiple Toda equations as the following  Toda system for the recursion coefficients $\alpha$, $\beta$ and $\gamma$:
\begin{pro}
	For multiple orthogonality with two weights we have the following Toda type system:
	\begin{align}\label{ts:alpha}\vartheta \alpha&=\beta-\mathfrak a_{+}\beta, \\ \label{ts:beta} \vartheta \beta&=\gamma-\mathfrak a_{+}\gamma+\beta(\mathfrak a_{-}\alpha-\alpha), \\ \label{ts:gamma} \vartheta \gamma&=\gamma(\mathfrak a_{-}^2\alpha-\alpha). \end{align}
	% and the following is true too:
	% \begin{equation}\label{ts:H}\vartheta H=\alpha H.\end{equation}
\end{pro}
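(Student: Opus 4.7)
The plan is to derive the system from a Lax-type identity for the recursion matrix $T$. From the factorization $T = S\Lambda S^{-1}$ and the definition $\phi \coloneq (\vartheta S)S^{-1}$ introduced earlier, straightforward differentiation together with $\vartheta(S^{-1}) = -S^{-1}(\vartheta S)S^{-1}$ yields the Lax equation $\vartheta T = [\phi, T]$. The key step is then to invoke Equation \eqref{toda:sub}, namely $-\phi = (\Lambda^\intercal)^2 \gamma + \Lambda^\intercal \beta$, which combined with \eqref{eq:T_rec} means $\phi = T - \alpha - \Lambda$. Substituting and using $[T,T]=0$ collapses the Lax equation to $\vartheta T = [\alpha + \Lambda,\, T]$.

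Next I would expand this commutator as $[\alpha + \Lambda,\, (\Lambda^\intercal)^2 \gamma + \Lambda^\intercal \beta + \alpha + \Lambda]$ using only the algebraic rules $\Lambda D = (\mathfrak a_- D)\Lambda$ and $D\Lambda^\intercal = \Lambda^\intercal (\mathfrak a_- D)$ valid for any diagonal $D$, together with $\Lambda \Lambda^\intercal = I$. Since $[\alpha+\Lambda,\alpha+\Lambda]=0$, only the four cross brackets with the lower triangular part survive. A direct diagonal-by-diagonal manipulation gives
\[
[\alpha, (\Lambda^\intercal)^2 \gamma] = (\Lambda^\intercal)^2 \gamma (\mathfrak a_-^2 \alpha - \alpha),\qquad [\alpha, \Lambda^\intercal \beta] = \Lambda^\intercal \beta (\mathfrak a_- \alpha - \alpha),
\]
\[
[\Lambda, (\Lambda^\intercal)^2 \gamma] = \Lambda^\intercal(\gamma - \mathfrak a_+ \gamma),\qquad [\Lambda, \Lambda^\intercal \beta] = \beta - \mathfrak a_+ \beta,
\]
so the full commutator assembles into one $(\Lambda^\intercal)^2$-term, one $\Lambda^\intercal$-term and one purely diagonal term.

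Finally, because $\vartheta \Lambda = 0$, the left-hand side decomposes as $\vartheta T = (\Lambda^\intercal)^2 \vartheta\gamma + \Lambda^\intercal \vartheta\beta + \vartheta\alpha$, so matching coefficients of $I$, $\Lambda^\intercal$ and $(\Lambda^\intercal)^2$ reproduces precisely \eqref{ts:alpha}, \eqref{ts:beta} and \eqref{ts:gamma}. The main obstacle is purely bookkeeping: one must move the diagonal matrices $\alpha, \beta, \gamma$ past $\Lambda$ and $\Lambda^\intercal$ consistently, applying the lowering/raising operators $\mathfrak a_\pm$ at each crossing, so that every term in the commutator ends up in the normal form $(\Lambda^\intercal)^k \cdot (\text{diagonal})$. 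No new analytic ingredient beyond the Lax identity and the expression for $\phi$ from \eqref{toda:sub} is required, which makes this essentially a Sato-type derivation of the multiple Toda hierarchy.
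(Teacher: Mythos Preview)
Your argument is correct and complete, apart from one harmless sign slip: from $-\phi=(\Lambda^\intercal)^2\gamma+\Lambda^\intercal\beta$ and \eqref{eq:T_rec} one gets $-\phi = T-\alpha-\Lambda$, i.e.\ $\phi=\alpha+\Lambda-T$, not $\phi=T-\alpha-\Lambda$ as you wrote. With the correct sign, $[\phi,T]=[\alpha+\Lambda,T]-[T,T]=[\alpha+\Lambda,T]$, which is the identity you then use; the four commutator computations and the diagonal matching are all right.

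Your route differs from the paper's. The paper works componentwise: it differentiates the expressions $\alpha=\mathfrak a_+S^{[1]}-S^{[1]}$, $\beta=-\vartheta S^{[1]}$, $\gamma=H^{-1}\mathfrak a_-^2H$ coming from \eqref{eq:alphabetagamma2}, feeding in the Sato equations \eqref{toda:S1}, \eqref{toda:dprin} and the relation $\gamma=-\vartheta S^{[2]}+(\vartheta\mathfrak a_-S^{[1]})S^{[1]}$. You instead go straight through the Lax equation $\vartheta T=[T_+,T]$ and read off the three diagonals. The paper in fact states this Lax equation only in the \emph{next} proposition, so you have effectively merged the two results into a single computation. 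Your approach is more compact and avoids tracking $S^{[1]},S^{[2]}$ individually; the paper's approach has the minor advantage that each of \eqref{ts:alpha}--\eqref{ts:gamma} is tied transparently to one of the scalar Sato relations \eqref{todas}, which are used elsewhere.
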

\begin{proof}
	Equation \eqref{ts:alpha} is immediately obtained from \eqref{toda:dprin}.
	From \eqref{toda:sub}, analyzing diagonal by diagonal we can obtain two different equations:
	\begin{align*}\vartheta S^{[1]}&=-\beta, \\ \gamma&=-\vartheta S^{[2]}+(\vartheta \mathfrak a_{-}S^{[1]})S^{[1]},\end{align*}
	using equations above and expressions for $\alpha$, $\beta$ and $\gamma$ depending of $S^{[2]}$ and $S^{[1]}$ we obtain \eqref{ts:alpha}, \eqref{ts:beta} and \eqref{ts:gamma}.
\end{proof}
\begin{pro} [Lax pair] The matrices $T$ and $\phi$ are a Lax pair, i.e. the following Lax equation is satisfied 
	\begin{equation}\label{lax}\vartheta T=[\phi,	T]=[T_{+},T],
	\end{equation}
	 where 
	\begin{align*}
		T_{-}&=(\Lambda^\intercal)^2\gamma+\Lambda^\intercal \beta, \\ 
		T_{+}&=T-T_{-}=\alpha+\Lambda.
	\end{align*}
	
\end{pro}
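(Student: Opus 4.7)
My plan is to establish the two equalities of \eqref{lax} separately. The identity $\vartheta T=[\phi,T]$ is a Sato-type flow that follows at once from the definitions $T=S\Lambda S^{-1}$ and $\phi=(\vartheta S)S^{-1}$, where the latter is a direct consequence of $\phi=\phi^{(1)}+\phi^{(2)}$ together with $\vartheta=\vartheta^{(1)}+\vartheta^{(2)}$ and the definition of $\phi^{(a)}$. The second identity $[\phi,T]=[T_{+},T]$ is then a purely algebraic rearrangement once one uses equation \eqref{toda:sub} to identify $\phi$ with $-T_{-}$.

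First I would differentiate the definition $T=S\Lambda S^{-1}$, using that $\Lambda$ does not depend on the deformation parameters $\eta^{(a)}$ and the standard identity $\vartheta(S^{-1})=-S^{-1}(\vartheta S)S^{-1}$. Substituting $\phi=(\vartheta S)S^{-1}$ and grouping the two resulting terms as a commutator should yield $\vartheta T=\phi T-T\phi=[\phi,T]$ without further calculation.

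Next, I would invoke \eqref{toda:sub}, which states $-\phi=(\Lambda^\intercal)^2\gamma+\Lambda^\intercal\beta$, so that $\phi=-T_{-}$. Writing $T=T_{+}+T_{-}$ and using bilinearity and antisymmetry of the commutator, the chain
\begin{align*}
	[\phi,T]=-[T_{-},T_{+}+T_{-}]=-[T_{-},T_{+}]=[T_{+},T_{-}]=[T_{+},T_{+}+T_{-}]=[T_{+},T]
\end{align*}
closes the argument.

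The main subtlety, rather than obstacle, is conceptual: it is crucial that $\phi$, which \emph{a priori} could have infinitely many nonzero subdiagonals as the expansion of $(\vartheta S)S^{-1}$ in powers of $\Lambda^\intercal$, in fact collapses to only two. This truncation is precisely the content of \eqref{toda:sub} and is what makes the $T_{+}/T_{-}$ splitting compatible with the flow in the Lax form. Once \eqref{toda:sub} is granted, everything else is formal manipulation of the Gauss--Borel factorization.
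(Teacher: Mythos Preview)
Your proposal is correct and follows essentially the same approach as the paper: differentiate $T=S\Lambda S^{-1}$ to obtain $\vartheta T=[\phi,T]$, then use \eqref{toda:sub} to identify $\phi=-T_{-}$ and conclude $[-T_{-},T]=[T_{+},T]$ via $[T,T]=0$. The only cosmetic difference is that the paper carries out the differentiation for each $\vartheta^{(a)}$ separately before summing, and writes the commutator rearrangement as $[-T_{-},T]=[T_{+}-T,T]=[T_{+},T]$ rather than your longer but equivalent chain.
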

\begin{proof}We  have $T=S\Lambda S^{-1}$ so that
	\begin{align*}
		\vartheta^{(a)}T=(\vartheta^{(a)}S)\Lambda S^{-1}-S\Lambda S^{-1}(\vartheta^{(a)}S)S^{-1}=\underbrace{(\vartheta^{(a)}S)S^{-1}}_{\phi^{(a)}}\underbrace{S\Lambda S^{-1}}_{T}-\underbrace{S\Lambda S^{-1}}_{T}\underbrace{(\vartheta^{(a)}S)S^{-1}}_{\phi^{(a)}}\end{align*}
	and we get Equation  \eqref{lax}.
	The other form can be proved with \eqref{toda:sub}
	and:
	\begin{align*}[-T_{-},T]=[T_{+}-T,T]=[T_{+},T].\end{align*}
\end{proof}
\begin{rem}\label{rem:extending_tau2}
As mentioned in Remark \ref{rem:extending_tau}, the previous results are applicable to more general situations beyond generalized hypergeometric series. The only requirement is that $\vartheta w^{(a)}(k)=kw^{(a)}(k)$. Once again, we utilize the tau function defined as $\tau_n=\mathscr W_n[\rho^{(1)}_0,\rho^{(2)}_0]$, where $\rho^{(a)}_0=\sum_{n=0}^\infty w^{(a)}(k)$.
\end{rem}

Coming back to the hypergeometric situation, there is another compatibility equation for Laguerre-Freud matrix, when we make calculations for concrete cases of multiple orthogonal polynomials we will name it \textit{Compatibility II}.
\begin{pro}
	The following compatibility conditions hold for recursion and Laguerre-Freud matrices
	\begin{align}\label{comII:Psi}\vartheta \Psi&=[\phi,\Psi]=[-T_{-},\Psi]=[\Psi,T_{-}], \\\label{compII:PsiT} \vartheta \Psi^\intercal&=\Psi^\intercal+[\mu,\Psi^\intercal]=\Psi^\intercal+[-T_{+}^\intercal,\Psi^\intercal],\end{align}
	with lower triangular matrices  $\mu\coloneq\mu^{(1)}+\mu^{(2)}$ and $\mu^{(a)}\coloneq(\vartheta_{(a)}H^{-1})H+H^{-1}\tilde{\phi}^{(a)}H.$
	
	%\begin{align*}\mu^{(a)}=(\mu^{(a)})^{[0]}+\Lambda^\intercal(\mu^{(a)})^{[1]}+\cdots.\end{align*}
\end{pro}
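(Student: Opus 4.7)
The plan is to mirror the derivation of Compatibility I \eqref{eq:comp1Psi}: each commutator identity follows by differentiating a connection formula for $\Psi$ with respect to $\vartheta$ and stripping the spectral-variable factor using a uniqueness argument, while the alternative forms involving $T_\pm$ are obtained by algebraic manipulation and by transposing Compatibility I.

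For \eqref{comII:Psi}, I would differentiate $\Psi B(z)=\theta(z)B(z-1)$ from \eqref{eq:P} with respect to $\vartheta$. Because $\theta(z)$ only involves the spectral variable and the fixed parameters $c_j$, one has $\vartheta\theta(z)=0$, and combined with $\vartheta B=\phi B$ from \eqref{derP:1} this gives
\[(\vartheta\Psi)B(z)+\Psi\phi B(z)=\theta(z)\phi B(z-1)=\phi\,\Psi B(z),\]
so $(\vartheta\Psi-[\phi,\Psi])SX(z)=0$. Lemma \ref{lemma:thecnical_lemma_X} combined with invertibility of the unitriangular $S$ yields $\vartheta\Psi=[\phi,\Psi]$; the remaining equalities follow from $\phi=-T_-$, read off from \eqref{toda:sub}, and the trivial sign move $[-T_-,\Psi]=[\Psi,T_-]$.

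For the first equality in \eqref{compII:PsiT}, I would first derive the derivation law $\vartheta A^{(a)}=\mu A^{(a)}$. Starting from $A^{(a)}=H^{-1}\tilde S X^{(a)}$ and using that $X^{(a)}$ is independent of $\eta^{(1)},\eta^{(2)}$, one obtains
\[\vartheta^{(b)}A^{(a)}=\bigl((\vartheta^{(b)}H^{-1})H+H^{-1}\tilde\phi^{(b)}H\bigr)A^{(a)}=\mu^{(b)}A^{(a)},\quad a,b\in\{1,2\},\]
and summing on $b$ yields $\vartheta A^{(a)}=\mu A^{(a)}$. Now apply $\vartheta$ to the type I connection formula $\sigma^{(a)}(z)A^{(a)}(z+1)=\Psi^\intercal A^{(a)}(z)$ from \eqref{eq:A1}. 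The crucial observation is that the parametrization \eqref{eq:Pearson_coeffcients} contains $\eta^{(a)}$ as an overall prefactor of $\sigma^{(a)}$, so $\vartheta\sigma^{(a)}(z)=\vartheta^{(a)}\sigma^{(a)}(z)=\sigma^{(a)}(z)$; this is precisely the origin of the additive $\Psi^\intercal$ term on the right-hand side of \eqref{compII:PsiT}. A short rearrangement then gives
\[\bigl(\vartheta\Psi^\intercal-\Psi^\intercal-[\mu,\Psi^\intercal]\bigr)A^{(a)}(z)=0\quad\text{for }a\in\{1,2\}.\]
Setting $N\coloneq\vartheta\Psi^\intercal-\Psi^\intercal-[\mu,\Psi^\intercal]$ and $M\coloneq NH^{-1}\tilde S$, one has $MX^{(a)}(z)=0$; expanding in powers of $z$ shows that the even columns of $M$ (from $a=1$) and the odd columns (from $a=2$) all vanish, hence $M=0$, and the invertibility of $H^{-1}\tilde S$ forces $N=0$.

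The second equality $[\mu,\Psi^\intercal]=[-T_+^\intercal,\Psi^\intercal]$ follows by transposing Compatibility I: from $[\Psi,T]=\Psi$ and $\vartheta\Psi=[\Psi,T_-]$ one reads $[T^\intercal,\Psi^\intercal]=\Psi^\intercal$ and $\vartheta\Psi^\intercal=[T_-^\intercal,\Psi^\intercal]$, and decomposing $T^\intercal=T_+^\intercal+T_-^\intercal$ produces $[T_-^\intercal,\Psi^\intercal]=\Psi^\intercal+[-T_+^\intercal,\Psi^\intercal]$, which matches the expression already obtained via $\mu$. The main obstacle I anticipate is precisely the adaptation of Lemma \ref{lemma:thecnical_lemma_X}: the original statement is phrased for the full monomial vector $X(z)$, whereas the type I step requires uniqueness from the pair $X^{(1)}(z),X^{(2)}(z)$. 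The saving observation—that these two vectors separately pick out the even and odd columns of any matrix acting on them—is a small but essential bookkeeping point; the rest of the argument is a direct matrix manipulation using the derivation rules for $B$ and $A^{(a)}$ together with the identifications $\phi=-T_-$ and $\vartheta\sigma^{(a)}=\sigma^{(a)}$.
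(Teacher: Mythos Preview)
Your proposal is correct and follows essentially the same route as the paper: differentiate the connection formulas \eqref{eq:P} and \eqref{eq:A1}, use $\vartheta B=\phi B$ and the derivation law $\vartheta A^{(a)}=\mu A^{(a)}$ (derived exactly as you do), and conclude via the uniqueness Lemmas~\ref{lemma:thecnical_lemma_X} and~\ref{lem:tecnical_typeI}; your anticipated ``obstacle'' about the pair $X^{(1)},X^{(2)}$ is precisely Lemma~\ref{lem:tecnical_typeI}, which the paper states separately. Your derivation of the alternative form $[-T_+^\intercal,\Psi^\intercal]$ by transposing Compatibility~I and decomposing $T^\intercal=T_+^\intercal+T_-^\intercal$ is in fact more explicit than what the paper writes, since the paper's proof stops after establishing $\vartheta\Psi^\intercal=\Psi^\intercal+[\mu,\Psi^\intercal]$ and leaves the $T_+^\intercal$ identification unproved.
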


\begin{proof}
	We have that 
	\begin{align*} 
		{} \theta(z)B(z-1)&=\Psi B(z),\\ 
		\vartheta B(z)&=\phi B(z).
	\end{align*}
	Therefore,
	\begin{align*}
		\vartheta(\theta(z)B(z-1))=\vartheta(\Psi B(z))=(\vartheta\Psi)B(z)+\Psi\vartheta B(z)=((\vartheta\Psi)-\Psi\phi)B(z),
	\end{align*}
	then
	\begin{align*}
		\vartheta(\theta(z)B(z-1))=((\vartheta\Psi)+\Psi\phi)B(z).
	\end{align*}
	We also have 
	\begin{equation*}
		\vartheta(\theta(z)B(z-1))=\theta(z)\phi B(z-1)=\phi\Psi B(z).
	\end{equation*}
	Equating  above relations and grouping terms and using  Lemma \ref{lemma:thecnical_lemma_X} we find that
	\begin{align*}
		\vartheta \Psi=[\phi,\Psi].
	\end{align*}
	To prove \eqref{compII:PsiT} we need to show  first $\vartheta^{(1)}A^{(1)}=\mu^{(1)}A^{(1)}$, $\vartheta^{(2)}A^{(1)}=\mu^{(2)}A^{(1)}$, $\vartheta^{(1)}A^{(2)}=\mu^{(1)}A^{(2)}$ and $\vartheta^{(2)}A^{(2)}=\mu^{(2)}A^{(2)}$.We will prove equations for $A^{(1)}$ because for $A^{(2)}$ it is analogous:
	\begin{multline*}
		\vartheta^{(2)}A^{(1)}=\vartheta^{(2)}(H^{-1}\tilde{S})X^{(1)}=((\vartheta^{(2)}H^{-1})\tilde{S}+H^{-1}(\vartheta^{(2)}\tilde{S}))X^{(1)}=(\vartheta^{(2)}H^{-1})HH^{-1}\tilde{S}X^{(1)}\\+H^{-1}(\vartheta^{(2)}\tilde{S})(H^{-1}\tilde{S})^{-1}(H^{-1}\tilde{S})X^{(1)} =((\vartheta^{(2)}H^{-1})H+H^{-1}(\vartheta^{(2)}\tilde{S})\tilde{S}^{-1}H)A^{(1)}.
	\end{multline*}
	Hence, we get \begin{align*}\vartheta^{(2)}A^{(1)}=\underbrace{((\vartheta^{(2)}H^{-1})H+H^{-1}\tilde{\phi}^{(2)}H)}_{\mu^{(2)}}A^{(1)},
	\end{align*}
	and
	\begin{multline*}\vartheta^{(1)}A^{(1)}=\vartheta^{(1)}(H^{-1}\tilde{S})X^{(1)}=((\vartheta^{(1)}H^{-1})\tilde{S}+H^{-1}(\vartheta^{(1)}\tilde{S}))X^{(1)}
		=(\vartheta^{(1)}H^{-1})HH^{-1}\tilde{S}X^{(1)}\\+H^{-1}(\vartheta^{(1)}\tilde{S})(H^{-1}\tilde{S})^{-1}(H^{-1}\tilde{S})X^{(1)} =((\vartheta^{(1)}H^{-1})H+H^{-1}(\vartheta^{(1)}\tilde{S})\tilde{S}^{-1}H)A^{(1)}.
	\end{multline*}
	Consequently,
	\begin{align*}
		\vartheta^{(1)}A^{(1)}=\underbrace{((\vartheta^{(1)}H^{-1})H+H^{-1}\tilde{\phi}^{(1)}H)}_{\mu^{(1)}}A^{(1)},
	\end{align*}
	if we add both last equations we have $\vartheta A^{(1)}=\mu A^{(1)}$ (and for $A^{(2)}$ there is the same equation).
	The compatibility Toda--Pearson must hold:
	\begin{align*}
		\sigma^{(1)}(z)A^{(1)}(z+1)&=\Psi^\intercal A^{(1)}(z),\\ 
		\vartheta A^{(1)}(z)&=\mu A^{(1)}(z),
	\end{align*}
	On the one hand
	\begin{align*}
		\vartheta(\sigma^{(1)}(z)A^{(1)}(z+1))=\vartheta(\Psi^\intercal A^{(1)}(z))=\vartheta(\Psi^\intercal)A^{(1)}(z)+\Psi^\intercal\vartheta A^{(1)}(z)=\vartheta(\Psi^\intercal)A^{(1)}(z)+\Psi^\intercal\mu A^{(1)}(z).
	\end{align*}
	On the other hand
	\begin{align*}
		\vartheta(\sigma^{(1)}(z)A^{(1)}(z+1))=\sigma^{(1)}(z)A^{(1)}(z+1)+\sigma^{(1)}(z)\mu A^{(1)}(z+1)=(I+\mu)\Psi^\intercal A^{(1)}(z).
	\end{align*} 
	Consequently, equating both:
	\begin{align*}
		\vartheta(\Psi^\intercal)A^{(1)}(z)+\Psi^\intercal\mu A^{(1)}(z)=(I+\mu)\Psi^\intercal A^{(1)}(z),
	\end{align*}
	and using Lemma \ref{lem:tecnical_typeI} we finally get
	\begin{align*}
		\vartheta(\Psi^\intercal)=\Psi^\intercal+[\mu,\Psi^\intercal].
	\end{align*}
\end{proof}

\subsection{Multiple Nijhoff--Capel discrete Toda equations}
	
We will now study the compatibility conditions between shifts in the three families of hypergeometric parameters:
\begin{align*}
	\lbrace b_i^{(1)}\rbrace^{M^{(1)}}_{i=1}, 
	\lbrace b_i^{(2)}\rbrace^{M^{(2)}}_{i=1}, \lbrace c_j\rbrace^N_{j=1},
\end{align*}
for functions $u_n$. We will use the following notation:
\begin{enumerate}
	\item $\hat u$ denotes a shift in one and only one parameter in the first family of parameters, i.e., $b^{(1)}_r\to b^{(1)}_r+1$ for only one given $r\in\{1,\dots,M^{(1)}\}$. The corresponding tridiagonal connection matrix is denoted by $\Omega^{(r)}$, and we define $\hat d\coloneq b^{(1)}_r$.
	\item $\check u$ denotes a shift in one and only one parameter in the second family of parameters, i.e., $b^{(2)}_s\to b^{(2)}_s+1$ for only one given $q\in\{1,\dots,M^{(1)}\}$. The corresponding tridiagonal connection matrix is denoted by $\Omega^{(q)}$, and we define $\check  d\coloneq b^{(2)}_q$.
	\item $\tilde u$ denotes a shift in one and only one parameter in the third family of parameters, i.e., $c_s\to c_s-1$ for only one given $s\in\{1,\dots,N\}$. The corresponding tridiagonal connection matrix is denoted by $\Omega^{(s)}$, and we define $\tilde  d\coloneq c_s-1$.
	\item $\bar u$ denotes the shift $n\to n+2$.
\end{enumerate}
\begin{lemma}Connection matrices fulfill the following compatibility conditions:
\begin{subequations}
	\begin{align}\label{compsr}\Omega^{(s)}\tilde{\Omega}^{(r)}=\Omega^{(r)}\hat{\Omega}^{(s)}, \\
\label{compsq}\Omega^{(s)}\tilde{\Omega}^{(q)}=\Omega^{(q)}\check{\Omega}^{(s)},\\
\label{comprq}\Omega^{(r)}\hat{\Omega}^{(q)}=\Omega^{(q)}\check{\Omega}^{(r)}.
\end{align}
\end{subequations}
\end{lemma}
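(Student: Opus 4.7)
The plan is to exploit the fact that each connection matrix realizes a single parameter shift on the type II polynomial vector $B$: by \eqref{con:iO1} and \eqref{con:Oi1}, one has $\Omega \cdot (\mathcal{S} B) = B$, where $\mathcal{S}$ is the corresponding hypergeometric shift. Because the three parameter families $\{b^{(1)}_i\}$, $\{b^{(2)}_q\}$, $\{c_j\}$ are independent, the hat, check and tilde shifts commute as operators on parameter space. Iterating any two of them on $B$ in either order must therefore yield the same polynomial vector, and this should force the desired matrix identities on the associated connection matrices.

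Concretely, for \eqref{compsr} I would write $B$ in two ways. Applying the hat shift first and the tilde shift afterwards gives
\[
B = \Omega^{(r)} \hat B = \Omega^{(r)} \hat{\Omega}^{(s)} \hat{\tilde B},
\]
while applying the shifts in the opposite order gives
\[
B = \Omega^{(s)} \tilde B = \Omega^{(s)} \tilde{\Omega}^{(r)} \tilde{\hat B}.
\]
Since the two iterated parameter shifts coincide, $\hat{\tilde B} = \tilde{\hat B}$, and writing this common vector as $\hat{\tilde S}\, X$ we obtain
\[
\bigl(\Omega^{(r)} \hat{\Omega}^{(s)} - \Omega^{(s)} \tilde{\Omega}^{(r)}\bigr)\hat{\tilde S}\, X = 0.
\]
Lemma \ref{lemma:thecnical_lemma_X} then yields $(\Omega^{(r)} \hat{\Omega}^{(s)} - \Omega^{(s)} \tilde{\Omega}^{(r)})\hat{\tilde S} = 0$, and invertibility of the lower unitriangular factor $\hat{\tilde S}$ delivers \eqref{compsr} as a matrix identity. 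The remaining identities \eqref{compsq} and \eqref{comprq} follow by the very same argument, merely replacing the pair of parameter families being shifted.

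The main potential obstacle is purely notational: one must keep careful track of the meaning of compound symbols like $\hat{\Omega}^{(s)}$, namely $\Omega^{(s)}$ built from the factorization data after an external hat shift in $b^{(1)}_r$, and verify that this interpretation is consistent with the construction $\Omega = S(\mathcal{S} S)^{-1}$ under a second, commuting shift. Because the two shifts act on disjoint parameter families, the two readings coincide, and no genuine analytical difficulty is anticipated beyond this bookkeeping.
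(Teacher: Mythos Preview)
Your proposal is correct and follows essentially the same route as the paper's own proof: both iterate the connection formulas $\Omega^{(r)}\hat B=B$ and $\Omega^{(s)}\tilde B=B$ in each order, use the commutativity $\hat{\tilde B}=\tilde{\hat B}$ of the parameter shifts, and deduce the matrix identity. Your version is in fact slightly more careful, since you explicitly invoke Lemma~\ref{lemma:thecnical_lemma_X} and the invertibility of $\hat{\tilde S}$ to pass from the equality on $B$ to the matrix equality, a step the paper leaves implicit.
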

\begin{proof}
In the one hand, connection formulas can be written as follows
\begin{align*}\Omega^{(r)}\hat{B}&=B, & \Omega^{(s)}\tilde{B}=B, \end{align*}
so that $\tilde{\Omega}^{(r)}\tilde{\hat{B}}=\tilde{B} $ and, consequently, 
$\Omega^{(s)}\tilde{\Omega}^{(r)}\tilde{\hat{B}}=\Omega^{(s)}\tilde{B}=B$.
On the other hand, we have
$\hat{\Omega}^{(s)}\hat{\tilde{B}}=\hat{B}$ so that 
$\Omega^{(r)}\hat{\Omega}^{(s)}\hat{\tilde{B}}=\Omega^{(r)}\hat{B}=B$. 
Hence, the compatibility $\tilde{\hat{P}}=\hat{\tilde{P}}$ leads to \eqref{compsr}.
Relations \eqref{compsq} and \eqref{comprq} are proven analogously.
\end{proof}
From this point we will use the following notation:
\begin{align*}
	H_{2n}&\eqcolon u, & H_{2n+1}&\eqcolon v, & S_{2n}^{[1]}&\eqcolon f,  & S_{2n+1}^{[1]}&\eqcolon g, & S_{2n}^{[2]}&\eqcolon F, & S_{2n+1}^{[2]}&\eqcolon G. 
\end{align*}
 \begin{teo}[Multiple Nijhoff--Capel Toda systems]
	The four  functions $u,v,f$ and $g$ satisfy the following 3D system of four  nonlinear difference  equations:
\begin{enumerate}
	\item 	 For the shifts {}$\hat{}$, {}$ \tilde{}$ and {} $\bar{}$:
\begin{subequations}\label{compsr-}	\begin{align}\label{compsr:p1d2}
 g(\tilde{f}-\hat{f})+\tilde{g}(\tilde{\hat{f}}-\tilde{f})+\hat{g}(\hat{f}-\hat{\tilde{f}})&=	\frac{1}{\tilde{d}}\left(\frac{\hat{\bar {u}}}{\hat{\tilde{u}}}-\frac{\bar {u}}{\tilde{u}}\right)+
 \frac{1}{\hat{d}}\left(\frac{\bar {u}}{\hat{u}}-\frac{\tilde{\bar {u}}}{\tilde{\hat{u}}}\right),\\
 \label{compsr:p2d2}
\bar {f}(\hat{g}-\tilde{g})+\tilde{\bar{f}}(\tilde{g}-\tilde{\hat{g}})+\hat{\bar {f}}(\hat{\tilde{g}}-\hat{g})&= \frac{1}{\tilde{d}}\left(\frac{\bar {v}}{\tilde{v}}-\frac{\hat{\bar {v}}}{\hat{\tilde{v}}}\right),\\ \label{compsr:p1d3}
 \frac{1}{\tilde{d}}\frac{\hat{\bar {u}}}{\hat{\tilde{u}}}(\hat{\bar {f}}-\bar {f})+\frac{1}{\hat{d}}\frac{\tilde{\bar {u}}}{\tilde{\hat{u}}}(\bar {f}-\tilde{\bar {f}})+\frac{1}{\tilde{d}}\frac{\bar {v}}{\tilde{v}}(\tilde{f}-\tilde{\hat{f}})&=0,\\ \label{compsr:p2d3}
  \frac{1}{\hat{d}}\frac{\bar {\bar {u}}}{\hat{\bar {u}}}(\hat{g}-\hat{\tilde{g}})+\frac{1}{\tilde{d}}\frac{\bar {\bar {u}}}{\tilde{\bar {u}}}(\tilde{\hat{g}}-\tilde{g})+\frac{1}{\tilde{d}}\frac{\hat{\bar {v}}}{\hat{\tilde{v}}}(\bar {g}-\hat{\bar {g}})&=0.
\end{align}
\end{subequations}
\item For the shifts {}$\check{}$, {}$ \tilde{}$ and {} $\bar{}$:
\begin{subequations}\label{compsq-}	
	 \begin{align}\label{compsq:p1d2}
		g(\tilde{f}-\check{f})+\tilde{g}(\check{\tilde{f}}-\tilde{f})+\check{g}(\check{f}-\check{\tilde{f}})&=	\frac{1}{\tilde{d}}\left(\frac{\check{\bar {u}}}{\check{\tilde{u}}}-\frac{\bar {u}}{\tilde{u}}\right)\\
		\label{compsq:p2d2} 
		\bar {f}(\tilde{g}-\check{g})+\tilde{\bar {f}}(\tilde{\check{g}}-\tilde{g})+\check{\bar {f}}(\check{g}-\check{\tilde{g}})&= \frac{1}{\tilde{d}}\left(\frac{\check{\bar {v}}}{\check{\tilde{v}}}-\frac{\bar {v}}{\tilde{v}}\right)+\frac{1}{\check{d}}\left(\frac{\bar {v}}{\check{v}}-\frac{\tilde{\bar {v}}}{\check{\tilde{v}}}\right),\\ \label{compsq:p1d3}\frac{1}{\tilde{d}}\frac{\check{\bar {u}}}{\check{\tilde{u}}}(\bar {f}-\check{\bar {f}})+\frac{1}{\check{d}}\frac{\bar {v}}{\check{v}}(\check{f}-\check{\tilde{f}})+\frac{1}{\tilde{d}}\frac{\bar {v}}{\tilde{v}}(\tilde{\check{f}}-\tilde{f})&=0,\\ \label{compsq:p2d3}
		\frac{1}{\tilde{d}}\frac{\bar {\bar {u}}}{\tilde{\bar {u}}}(\tilde{g}-\tilde{\check{g}}) +\frac{1}{\check{d}}\frac{\tilde{\bar {v}}}{\tilde{\check{v}}}(\bar {g}-\tilde{\bar {g}})+\frac{1}{\tilde{d}}\frac{\check{\bar {v}}}{\check{\tilde{v}}}(\check{\bar {g}}-\bar {g})&=0.\end{align}
	\end{subequations}
\item For the shifts {}$\hat{}$, {}$ \check{}$ and {} $\bar{}$:
\begin{subequations}\label{compabd3p}
	   \begin{align}
		\label{compabd3p1}
		\frac{1}{\hat d}\frac{\check{\bar {u}}}{\check{\hat{u}}}(\bar {f}-\check{\bar {f}})+       \frac{1}{\check d}\frac{\bar {v}}{\check{v}}(\check{f}-\check{\hat{f}})&=0,\\
		\label{compabd3p2}
		\frac{1}{\hat d}\frac{\bar {\bar {u}}}{\hat{\bar {u}}}(\hat{\check{g}}-\hat{g})+    \frac{1}{\check d}\frac{\hat{\bar {v}}}{\check{\hat{v}}}(\hat{\bar {g}}-\bar {g})&= 0,\\
		\label{compabd2p1}
		g(\check{f}-\hat{f})+\check{g}(\check{\hat{f}}-\check{f})+\hat{g}(\hat{f}-\check{\hat{f}})&=\frac{1}{\hat{d}}\left(\frac{\bar {u}}{\hat{u}}-\frac{\check{\bar {u}}}{\hat{\check{u}}}\right),\\
		\label{compabd2p2}
		\bar {f}(\check{g}-\hat{g})+\check{\bar {f}}(\check{\hat{g}}-{\check{g}})+\hat{\bar {f}}(\hat{g}-\check{\hat{g}})&=  \frac{1}{\check{d}}\left(\frac{\hat{\bar {v}}}{\check{\hat{v}}}-\frac{\bar {v}}{\check{v}}\right).
	\end{align}
\end{subequations}
\end{enumerate}
 \end{teo}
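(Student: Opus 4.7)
The plan is to derive each of the three systems \eqref{compsr-}, \eqref{compsq-}, \eqref{compabd3p} directly from the corresponding matrix compatibility relations \eqref{compsr}, \eqref{compsq}, \eqref{comprq} established in the preceding Lemma. Each connection matrix involved has, by the structural proposition, the three-banded lower form
\begin{align*}
\Omega = I + \Lambda^\intercal \tfrac{1}{d}\omega^{[1]} + (\Lambda^\intercal)^2 \tfrac{1}{d}\omega^{[2]},
\end{align*}
with $d\in\{\hat d,\check d,\tilde d\}$ and $\omega^{[1]},\omega^{[2]}$ diagonal matrices expressible through $H$ and $\tilde S^{[1]}$ (which in turn can be reduced to $H$ and $S^{[1]}$ by \eqref{eq:alphabetagamma}). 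So each side of a compatibility relation is a product of two such three-banded matrices, hence a five-banded lower matrix.

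First, I would substitute these explicit forms into both sides of \eqref{compsr} (and analogously for \eqref{compsq}, \eqref{comprq}). Using the shift rules $D\Lambda^\intercal = \Lambda^\intercal \mathfrak a_- D$ to move all $\Lambda^\intercal$ to the left, each product expands as $I + \Lambda^\intercal X_1 + (\Lambda^\intercal)^2 X_2 + (\Lambda^\intercal)^3 X_3 + (\Lambda^\intercal)^4 X_4$, whose four diagonal coefficients $X_k$ are given explicitly in terms of the $\omega^{[\ell]}/d$ and their shifts under $\mathfrak a_-$ (together with the shifts $\hat{}$, $\check{}$, $\tilde{}$ that the outer matrix imposes on the inner one). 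Equating the two five-banded matrices entry-wise then yields four scalar identities for every index $n$, since the diagonal coefficient is trivially $I=I$.

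Second, I would pass to the variables of the theorem by splitting the index parity: the diagonal matrix entries at rows $2n$ and $2n+1$ become $u,v,f,g,F,G$, and the bar shift $n\mapsto n+2$ appears naturally because each factor $\Lambda^\intercal$ moves down one step in the doubled lattice, so advancing by two in $n$ corresponds to the bar step in the even/odd rearrangement. Using \eqref{eq:alphabetagamma} to eliminate $\tilde S^{[1]}$, $F$ and $G$ in favour of $u,v,f,g$, and collecting the two diagonal coefficients $X_1, X_2$ (which lose the overall $d$-factors and produce the $(u,v,f,g)$-polynomial equations \eqref{compsr:p1d2}, \eqref{compsr:p2d2} and their analogues) versus $X_3, X_4$ (which retain the $1/\hat d, 1/\check d, 1/\tilde d$ prefactors and generate \eqref{compsr:p1d3}, \eqref{compsr:p2d3} and their analogues), produces the stated systems after standard rearrangement.

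The main obstacle is the combinatorial bookkeeping: in an expression such as $\Omega^{(s)}\tilde\Omega^{(r)}$, the tilde shift acts on the \emph{inner} matrix, so one must carefully distinguish compositions like $\tilde\Theta_s\Theta^{(1)}_r$ versus $\Theta^{(1)}_r\tilde\Theta_s$, and track how $\mathfrak a_-$ interacts with these shifts when commuting $\Lambda^\intercal$ past diagonal factors. A secondary technical point is the asymmetry between the two ``upper'' equations of each group (arising from the first two sub-diagonals of the matrix identity, where the factors $d$ cancel homogeneously) and the two ``lower'' equations (arising from the third and fourth sub-diagonals, where one factor of $1/d$ survives); verifying that both collapse to the clean symmetric forms stated requires repeated use of the telescoping identities hidden in the explicit expressions of $\omega^{[1]}$ and $\omega^{[2]}$.
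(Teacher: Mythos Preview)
Your overall strategy---expand the matrix compatibility \eqref{compsr}, \eqref{compsq}, \eqref{comprq} diagonal by diagonal and then read off scalar equations---matches the paper's. However, your mechanism for producing the four equations is wrong, and this is not just bookkeeping.

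You claim the four equations of each system come from the four subdiagonal coefficients $X_1,X_2,X_3,X_4$. In fact the first subdiagonal is \emph{trivially} satisfied: since $\tfrac{1}{d}\omega^{[1]}=S^{[1]}-\Theta S^{[1]}$, the first-subdiagonal coefficient of a product $\Omega^{(s)}\tilde\Omega^{(r)}$ telescopes to $S^{[1]}-\tilde{\hat S}^{[1]}$, which equals the same thing on the other side by commutativity of the shifts. The fourth subdiagonal is likewise not used. The paper extracts the four equations from the \emph{second and third} subdiagonals only; each of those two diagonal matrix identities is then split according to the projectors $I^{(1)}$ and $I^{(2)}$ (even versus odd row index), yielding two scalar equations apiece. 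That parity splitting is the actual source of the four equations, and it is what makes $u,v$ and $f,g$ appear as separate objects. Your proposal never invokes the $I^{(1)}/I^{(2)}$ decomposition, so as written it would not arrive at the stated system.

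A secondary point: the detour through $\tilde S^{[1]}$ and the elimination via \eqref{eq:alphabetagamma} is unnecessary. The $\Omega$ matrices are defined from $S$ (see \eqref{def:iO}, \eqref{def:Oi}), and the structural proposition already gives $\tfrac{1}{d}\omega^{[1]}=S^{[1]}-\Theta S^{[1]}$ and $\tfrac{1}{d}\omega^{[2]}$ in terms of $H$ and the projector $I^{(a)}$ directly. So the computation lives entirely in $S^{[1]}$ and $H$ from the start; no appeal to $\tilde S^{[1]}$, $F$, or $G$ is needed. Your remark that the ``upper'' equations lose the $d$-factors is also incorrect: inspect \eqref{compsr:p1d2}---the right-hand side carries $1/\tilde d$ and $1/\hat d$ explicitly, because the $\omega^{[2]}$ contribution to the second subdiagonal comes with a surviving $1/d$.
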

 \begin{proof}
\begin{enumerate}
	\item   From \eqref{compsr} %using \eqref{est:iO} for this case 
 we can find the following two non-trivial equations for matrices $S^{[1]}$ and $H$ from second and third subdiagonals:
 \begin{multline}\label{compsr:d2}
 	\frac{1}{\tilde{d}}(\mathfrak a_{-}^2\hat{H})\hat{\tilde{H}}^{-1}+(\mathfrak a_{-}S^{[1]}-\mathfrak a_{-}\hat{S}^{[1]})(\hat{S}^{[1]}-\hat{\tilde{S}}^{[1]})+\frac{1}{\hat{d}}I^{(1)}\hat{H}^{-1}\mathfrak a_{-}^2H\\=\frac{1}{\hat{d}}\tilde{\hat{H}}^{-1}(\mathfrak a_{-}^2\tilde{H})I^{(1)}+(\mathfrak a_{-}S^{[1]}-\mathfrak a_{-}\hat{S}^{[1]})(\tilde{S}^{[1]}-\tilde{\hat{S}}^{[1]})+\frac{1}{\tilde{d}}(\mathfrak a_{-}^2H)\tilde{H}^{-1}
 \end{multline}
 and
 \begin{multline}\label{compsr:d3}
 	\frac{1}{\tilde{d}}(\mathfrak a_{-}^2\hat{H})\hat{\tilde{H}}^{-1}(\mathfrak a_{-}^2S^{[1]}-\mathfrak a_{-}^2\hat{S}^{[1]})+\frac{1}{\hat{d}}I^{(2)}(\mathfrak a_{-}^3H)(\mathfrak a_{-}\hat{H}^{-1})(\hat{S}^{[1]}-\tilde{\hat{S}}^{[1]})\\=\frac{1}{\hat{d}}I^{(1)}(\mathfrak a_{-}^2\tilde{H})\tilde{\hat{H}}^{-1}+\frac{1}{\tilde{d}}(\mathfrak a_{-}^3H)(\mathfrak a_{-}\tilde{H}^{-1})(\tilde{S}^{[1]}-\tilde{\hat{S}}^{[1]}).
 \end{multline}
 From \eqref{compsr:d2}, equating by one side the terms that multiplied $I^{(1)}$ we obtain \eqref{compsr:p1d2} and from the terms multiplied by $I^{(2)}$ it is obtained \eqref{compsr:p2d2}. 
 From \eqref{compsr:d3}, equating terms in $I^{(1)}$ it is obtained \eqref{compsr:p1d3} and from terms in $I^{(2)}$ we get \eqref{compsr:p2d3}.
 \item
 From \eqref{compsq}, we  find two non-trivial equations for matrices $S^{[1]}$ and $H$ from second and third subdiagonals:
 \begin{multline}\label{compsq:d2}
 	\frac{1}{\tilde{d}}(\mathfrak a_{-}^2\check{H})\check{\tilde{H}}^{-1}+(\mathfrak a_{-}S^{[1]}-\mathfrak a_{-}\check{S}^{[1]})(\check{S}^{[1]}-\check{\tilde{S}}^{[1]})+\frac{1}{\check{d}}I^{(2)}\check{H}^{-1}\mathfrak a_{-}^2H\\=\frac{1}{\check{d}}I^{(2)}\tilde{\check{H}}^{-1}\mathfrak a_{-}^2\tilde{H}+(\mathfrak a_{-}S^{[1]}-\mathfrak a_{-}\tilde{S}^{[1]})(\tilde{S}^{[1]}-\tilde{\check{S}}^{[1]})+\frac{1}{\tilde{d}}(\mathfrak a_{-}^2H)\tilde{H}^{-1}
 \end{multline}
 and
 \begin{multline}\label{compsq:d3}\frac{1}{\tilde{d}}(\mathfrak a_{-}^2\check{H})\check{\tilde{H}}^{-1}(\mathfrak a_{-}^2S^{[1]}-\mathfrak a_{-}^2\check{S}^{[1]})+\frac{1}{\check{d}}I^{(1)}(\mathfrak a_{-}\check{H}^{-1})(\mathfrak a_{-}^3H)(\check{S}^{[1]}-\check{\tilde{S}}^{[1]})\\=\frac{1}{\check{d}}I^{(2)}\tilde{\check{H}}^{-1}\mathfrak a_{-}^2\tilde{H}+\frac{1}{\tilde{d}}(\mathfrak a_{-}^3H)(\mathfrak a_{-}\tilde{H}^{-1})(\tilde{S}^{[1]}-\tilde{\check{S}}^{[1]}).
 \end{multline}
 From \eqref{compsq:d2}, equating by one side the terms that multiplied $I^{(1)}$ we obtain \eqref{compsq:p1d2} and from the terms multiplied by $I^{(2)}$ it is obtained \eqref{compsr:p2d2}. 
 From Equation \eqref{compsq:d3}, equating terms in $I^{(1)}$ it is obtained \eqref{compsq:p1d3} and from terms in $I^{(2)}$ we obtain \eqref{compsq:p2d3}.
\item 
    From \eqref{comprq}, from third subdiagonal we can obtain
    \begin{multline}\frac{1}{\tilde{d}}(\mathfrak a_{-}^2\tilde{H}^{-1})I^{(1)}(\mathfrak a_{-}^3H)(\tilde{S}^{[1]}-\tilde{\hat{S}}^{[1]})+\frac{1}{\hat{d}}I^{(1)}\tilde{\hat{H}}^{-1}(\mathfrak a_{-}^2\tilde{H})(\mathfrak a_{-}^2S^{[1]}-\mathfrak a_{-}^2\tilde{S}^{[1]})\\=\frac{1}{\hat{d}}I^{(2)}(\mathfrak a_{-}\hat{H}^{-1})(\mathfrak a_{-}^3H)(\hat{S}^{[1]}-\hat{\tilde{S}}^{[1]})+\frac{1}{\tilde{d}}I^{(2)}\hat{\tilde{H}}^{-1}\mathfrak a_{-}^2\hat{H}\end{multline}
 and from the terms of $I^{(1)}$ we can obtain \eqref{compabd3p1}, and from terms of $I^{(2)}$ we can obtain \eqref{compabd3p2}.\\
 From second subdiagonal we get the following equation:
 \begin{multline}
   \frac{1}{  \tilde{d}}I^{(2)}\tilde{H}^{-1}\mathfrak a_{-}^2H+(\mathfrak a_{-}S^{[1]})\tilde{S}^{[1]}-(\mathfrak a_{-}\tilde{S}^{[1]})(\tilde{S}^{[1]}-\tilde{\hat{S}}^{[1]})+\frac{1}{\hat{d}}I^{(1)}\tilde{\hat{H}}^{-1}\mathfrak a_{-}^2\tilde{H}\\
  =\frac{1}{\hat{d}}I^{(1)}\hat{H}^{-1}\mathfrak a_{-}^2H+(\mathfrak a_{-}S^{[1]})\hat{S}^{[1]}-(\mathfrak a_{-}\hat{S}^{[1]})(\hat{S}^{[1]}-\hat{\tilde{S}}^{[1]})+\frac{1}{\tilde{d}}I^{(2)}\hat{\tilde{H}}^{-1}\mathfrak a_{-}^2\hat{H},
 \end{multline}
from terms of $I^{(1)}$ we obtain \eqref{compabd2p1} and from terms of $I^{(2)}$ we deduce \eqref{compabd2p2}. 
\end{enumerate}
 \end{proof}
\begin{rem}
Each set of Equations \eqref{compsr-},  \eqref{compsq-} and \eqref{compabd3p}	is a nonlinear system of four equations of discrete partial difference equations in three variables for four functions. The first two systems  \eqref{compsr-} and  \eqref{compsq-} are equivalent under the interchange of $\hat{}\longleftrightarrow\check{}$, $u\longleftrightarrow  v$  and $f\longleftrightarrow  g$.  
\end{rem}
\begin{rem}
	Notice that Equations \eqref{compsr-} and \eqref{compsq-} can be considered for the AT systems of weights  described Lemma \ref{lemma:ATSystems}. Therefore, we are sure that the multiple orthogonal polynomials do exist. For  other cases, one needs to proof perfectness or that the system is AT, which is still a open issue.
\end{rem}
\begin{rem}
In \cite{Manas_Fernandez-Irisarri}, we discovered the Nijhoff--Capel Toda equation (Equation (128)) for standard non-multiple hypergeometric orthogonal polynomials \cite{Manas_Fernandez-Irisarri}. It can be expressed as:
\begin{align*}
	\frac{\bar u}{\tilde u}-\frac{\hat{\bar u}}{\hat{\tilde  u}}=\frac{\bar u}{\hat u}-\frac{\tilde{\bar u}}{\tilde{\hat u}}
\end{align*}
This equation is obtained by considering Equation \eqref{compsr:p1d2} with $v=g=0$. Consequently, we refer to these systems as multiple Nijhoff--Capel Toda systems, namely \eqref{compsr-}, \eqref{compsq-}, and \eqref{compabd3p}. The mentioned equation is the fifth equation among the canonical types for consistency equations corresponding to an octahedral sub-lattice, as described in \cite{Adler} and \cite[Equation (3.107c) and \S 3.9]{Hietarinta}.
\end{rem}

\begin{teo}\label{rem:tau_Nijhoff_Capel}
The multiple Nijhoff--Capel system \eqref{compsq-}	 reads
	\begin{align*}
		&	\begin{multlined}[t][\textwidth]	\frac{\tau^1_{2n+1}}{\tau_{2n+1}}\left(\frac{\tilde \tau^1_{2n}}{\tilde \tau_{2n}}-\frac{\hat \tau^1_{2n}}{\hat \tau_{2n}}\right)+\frac{\tilde \tau^1_{2n+1}}{\tilde {{\tau}}_{2n+1}}\left(\frac{\tilde {\hat \tau}^1_{2n}}{\tilde{ \hat \tau}_{2n}}-\frac{\tilde \tau^1_{2n}}{\tilde \tau_{2n}}\right)+\frac{\hat \tau^1_{2n+1}}{\hat {{\tau}}_{2n+1}}\left(\frac{ {\hat \tau}^1_{2n}}{{ \hat \tau}_{2n}}-\frac{\hat{\tilde \tau}^1_{2n}}{\hat{\tilde \tau}_{2n}}\right)\\
			=	\frac{1}{\tilde{d}}\left(
			\frac{\hat {\bar\tau}_{2n+1}}{\hat {\bar\tau}_{2n}}\frac{\hat{\tilde{\tau}}_{2n}}	{\hat{\tilde{\tau}}_{2n+1}}-	\frac{ {\bar\tau}_{2n+1}}{ {\bar\tau}_{2n}}	\frac{\tilde \tau_{2n}}{\tilde \tau_{2n+1}}\right)+
			\frac{1}{\hat{d}}\left(
			\frac{ {\bar\tau}_{2n+1}}{ {\bar\tau}_{2n}}\frac{\hat{{\tau}}_{2n}}	{\hat{{\tau}}_{2n+1}}-	\frac{ \tilde {\bar\tau}_{2n+1}}{ \tilde{\bar\tau}_{2n}}	\frac{\tilde {\hat \tau}_{2n}}{\tilde {\hat \tau}_{2n+1}}\right),
		\end{multlined}\\
		&		\frac{{\bar\tau}^1_{2n}}{{\bar\tau}_{2n}}\left(\frac{\tilde \tau^1_{2n+1}}{\tilde \tau_{2n+1}}+\frac{\hat \tau^1_{2n+1}}{\hat \tau_{2n+1}}\right)+\frac{\tilde {\bar\tau}^1_{2n}}{\tilde {{\bar\tau}}_{2n}}\left(\frac{\tilde {\hat \tau}^1_{2n+1}}{\tilde{ \hat \tau}_{2n+1}}-\frac{\tilde \tau^1_{2n+1}}{\tilde \tau_{2n+1}}\right)+\frac{\hat {\bar\tau}^1_{2n}}{\hat {{\bar\tau}}_{2n}}\left(\frac{ {\hat \tau}^1_{2n+1}}{{ \hat \tau}_{2n+1}}-\frac{\hat{\tilde \tau}^1_{2n+1}}{\hat{\tilde \tau}_{2n+1}}\right)
		=	\frac{1}{\tilde{d}}\left(
		\frac{\hat {\bar{\bar\tau}}_{2n}}{\hat {\bar\tau}_{2n+1}}\frac{\hat{\tilde{\tau}}_{2n+1}}	{\hat{\tilde{\bar\tau}}_{2n}}-	\frac{ {\bar{\bar\tau}}_{2n}}{ {\bar\tau}_{2n+1}}	\frac{\tilde \tau_{2n+1}}{\tilde {\bar\tau}_{2n}}\right),
		\\
		&	\frac{1}{\tilde{d}}	\frac{\hat {\bar\tau}_{2n+1}}{\hat {\bar\tau}_{2n}}\frac{\hat{\tilde{\tau}}_{2n}}	{\hat{\tilde{\tau}}_{2n+1}}
		\left(\frac{ {\hat {\bar\tau}}^1_{2n}}{{ \hat {\bar\tau}}_{2n}}-\frac{{\bar\tau}^1_{2n}}{{\bar\tau}_{2n}}\right)+\frac{1}{\hat{d}}
		\frac{ \tilde {\bar\tau}_{2n+1}}{ \tilde{\bar\tau}_{2n}}	\frac{\tilde {\hat \tau}_{2n}}{\tilde {\hat \tau}_{2n+1}}	\left(\frac{{\bar\tau}^1_{2n}}{{\bar\tau}_{2n}}-\frac{ {\tilde {\bar \tau}}^1_{2n}}{{ \tilde {\bar\tau}}_{2n}}\right)+\frac{1}{\tilde{d}}
		\frac{ {\bar{\bar\tau}}_{2n}}{ {\bar\tau}_{2n+1}}	\frac{\tilde \tau_{2n+1}}{\tilde {\bar\tau}_{2n}}\left(\frac{\tilde \tau^1_{2n}}{\tilde \tau_{2n}}-\frac{\tilde {\hat \tau}^1_{2n}}{\tilde{ \hat \tau}_{2n}}\right)=0,\\ &
		\frac{1}{\hat{d}}\frac{ {\bar{\bar\tau}}_{2n+1}}{ {\bar{\bar\tau}}_{2n}}\frac{\hat{\bar{\tau}}_{2n}}	{\hat{{\bar\tau}}_{2n+1}}\left(\frac{ {\hat \tau}^1_{2n+1}}{{ \hat \tau}_{2n+1}}-\frac{\hat{\tilde \tau}^1_{2n+1}}{\hat{\tilde \tau}_{2n+1}}\right)+\frac{1}{\tilde{d}}\frac{ {\bar{\bar\tau}}_{2n+1}}{ {\bar{\bar\tau}}_{2n}}\frac{\tilde{\bar{\tau}}_{2n}}	{\tilde{{\bar\tau}}_{2n+1}}\left(\frac{\tilde {\hat \tau}^1_{2n+1}}{\tilde{ \hat \tau}_{2n+1}}-\frac{\tilde \tau^1_{2n+1}}{\tilde \tau_{2n+1}}\right)+\frac{1}{\tilde{d}}\frac{ \tilde {\bar\tau}_{2n+1}}{ \tilde{\bar\tau}_{2n}}	\frac{\tilde {\hat \tau}_{2n}}{\tilde {\hat \tau}_{2n+1}}\left(\frac{ { \bar\tau}^1_{2n+1}}{{   \bar\tau}_{2n+1}}-\frac{\hat{ \bar\tau}^1_{2n+1}}{\hat{ \bar\tau}_{2n+1}}\right)=0.
		%		\label{compsr:p2d2_tau}
		%	&	\bar {f}(\hat{g}-\tilde{g})+\tilde{\bar{f}}(\tilde{g}-\tilde{\hat{g}})+\hat{\bar {f}}(\hat{\tilde{g}}-\hat{g})= \frac{1}{\tilde{d}}\left(\frac{\bar {v}}{\tilde{v}}-\frac{\hat{\bar {v}}}{\hat{\tilde{v}}}\right),\\ &\label{compsr:p1d3_tau}
		%		\frac{1}{\tilde{d}}\frac{\hat{\bar {u}}}{\hat{\tilde{u}}}(\hat{\bar {f}}-\bar {f})+\frac{1}{\hat{d}}\frac{\tilde{\bar {u}}}{\tilde{\hat{u}}}(\bar {f}-\tilde{\bar {f}})+\frac{1}{\tilde{d}}\frac{\bar {v}}{\tilde{v}}(\tilde{f}-\tilde{\hat{f}})=0,\\ 
		%		&\label{compsr:p2d3_tau}
		%		\frac{1}{\hat{d}}\frac{\bar {\bar {u}}}{\hat{\bar {u}}}(\hat{g}-\hat{\tilde{g}})+\frac{1}{\tilde{d}}\frac{\bar {\bar {u}}}{\tilde{\bar {u}}}(\tilde{\hat{g}}-\tilde{g})+\frac{1}{\tilde{d}}\frac{\hat{\bar {v}}}{\hat{\tilde{v}}}(\bar {g}-\hat{\bar {g}})=0.
	\end{align*}
	Notice that we have large families of solutions  to this system of equations in terms of double Wronskians of  generalized hypergeometric functions.
\end{teo}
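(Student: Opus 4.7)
The plan is to execute a direct substitution from the $\{u,v,f,g\}$ variables of \eqref{compsq-} into tau-function ratios. From Proposition \ref{pro:Hptau} and the identification $f_n=S^{[1]}_n=p^1_{n+1}$ used just above Theorem \ref{theo:third_order_PDE}, together with $p^1_n=-\tau^1_n/\tau_n$, one reads off the dictionary
\begin{align*}
u &= \frac{\tau_{2n+1}}{\tau_{2n}}, & v &= \frac{\tau_{2n+2}}{\tau_{2n+1}}, & f &= -\frac{\tau^1_{2n+1}}{\tau_{2n+1}}, & g &= -\frac{\tau^1_{2n+2}}{\tau_{2n+2}}.
\end{align*}
Here the shift $\bar{}$ is the index shift $n\mapsto n+2$, while $\hat{}$, $\check{}$, $\tilde{}$ act on the hypergeometric parameters $b^{(1)}_r$, $b^{(2)}_q$, $c_s$ respectively; by construction all four commute, and each acts on every $\tau$ and $\tau^1$ appearing in the dictionary.

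First I would rewrite each right-hand side in \eqref{compsq-}. Quotients such as $\bar u/\tilde u$ become $(\tau_{2n+3}/\tau_{2n+2})(\tilde\tau_{2n}/\tilde\tau_{2n+1})$ by $H_n=\tau_{n+1}/\tau_n$, and similarly for $\check{\bar u}/\check{\tilde u}$, $\tilde{\bar v}/\tilde{\check v}$, etc.; these are precisely the quotients of $\tau$'s displayed in the theorem. Second I would rewrite each left-hand side: terms of the form $g(\tilde f-\check f)$ or $\bar f(\tilde g-\check g)$ become bilinear expressions in the quotients $\tau^1/\tau$ via $f=-\tau^1_{2n+1}/\tau_{2n+1}$ and $g=-\tau^1_{2n+2}/\tau_{2n+2}$, with the overall sign absorbed into the combination $\tilde f-\check f$. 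Matching shift-by-shift, \eqref{compsq:p1d2}, \eqref{compsq:p2d2}, \eqref{compsq:p1d3}, \eqref{compsq:p2d3} map one-to-one onto the four equations displayed in the theorem.

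The final claim, that large families of solutions are furnished by double Wronskians of generalized hypergeometric functions, is then immediate from the representation $\tau_n=\mathscr W_n[\rho^{(1)}_0,\rho^{(2)}_0]$ established in the discussion around Lemma \ref{lemma:tildetau}, together with Remarks \ref{rem:extending_tau} and \ref{rem:extending_tau2} which justify using this Wronskian form in the present hypergeometric setting.

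The main obstacle is purely clerical: four coupled equations, three parameter shifts, an index shift, and indices running over $2n,2n+1,2n+2,2n+3$ make it easy to misalign a subscript. No structural insight beyond the dictionary is needed, and in particular no new compatibility must be checked — the theorem is a rewriting of \eqref{compsq-} under the tau-function change of variables, and the matching is equation-by-equation and term-by-term.
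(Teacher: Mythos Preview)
Your overall strategy---substitute the $\tau$-function expressions for $u,v,f,g$ into the Nijhoff--Capel system and read off the displayed equations---is exactly the paper's approach; the paper's proof consists of nothing more than stating the parametrization. But your dictionary for $f$ and $g$ is off, and with it the substitution does not reproduce the equations as displayed.

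You write $f=-\tau^1_{2n+1}/\tau_{2n+1}$ and $g=-\tau^1_{2n+2}/\tau_{2n+2}$, arguing from $S^{[1]}_n=p^1_{n+1}=-\tau^1_{n+1}/\tau_{n+1}$. The paper's parametrization is instead
\[
u=\frac{\tau_{2n+1}}{\tau_{2n}},\qquad v=\frac{\tau_{2n+2}}{\tau_{2n+1}},\qquad f=\frac{\tau^1_{2n}}{\tau_{2n}},\qquad g=\frac{\tau^1_{2n+1}}{\tau_{2n+1}}.
\]
With your choice, a term such as $g(\tilde f-\hat f)$ becomes $(\tau^1_{2n+2}/\tau_{2n+2})\bigl(\tilde\tau^1_{2n+1}/\tilde\tau_{2n+1}-\hat\tau^1_{2n+1}/\hat\tau_{2n+1}\bigr)$, carrying subscripts $2n+2$ and $2n+1$; the theorem's first equation carries $2n+1$ and $2n$. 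The sign indeed cancels (product of two minus signs), but the index shift does not, so your ``term-by-term matching'' would fail. A second, smaller point: the displayed equations carry $\hat{\ }$ shifts throughout and are literally the system \eqref{compsr-}, not \eqref{compsq-}; the cross-reference in the statement is to the equivalent system, but if you actually substitute into \eqref{compsq-} you will see $\check{\ }$'s everywhere, not $\hat{\ }$'s.
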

\begin{proof}
		Use the $\tau$-function parametrization
	\begin{align*}
		u&=\frac{\tau_{2n+1}}{\tau_{2n}},& 	v&=\frac{\tau_{2n+2}}{\tau_{2n+1}},& f&=\frac{\tau^1_{2n}}{\tau_{2n}},  & g&=\frac{\tau^1_{2n+1}}{\tau_{2n+1}}.
	\end{align*}
\end{proof}

\begin{lemma}\label{lem:tecnical_typeI}
If a matrix $M$ is such that $MX^{(1)}(z)=MX^{(2)}(z)=0$ then $M=0$.
\end{lemma}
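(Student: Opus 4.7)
The plan is to mimic the proof of Lemma \ref{lemma:thecnical_lemma_X}, but applied separately to the two partial monomial vectors $X^{(1)}(z)$ and $X^{(2)}(z)$. The key observation is that although neither $X^{(1)}(z)$ nor $X^{(2)}(z)$ alone generates the whole space of column indices, together they do: $X^{(1)}(z)$ produces the standard basis vectors sitting in even positions, while $X^{(2)}(z)$ produces those in odd positions.

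Concretely, from the definitions
\begin{align*}
X^{(1)}(z) &= \sum_{n\geq 0} z^n\, e_{2n}, & X^{(2)}(z) &= \sum_{n\geq 0} z^n\, e_{2n+1},
\end{align*}
where $e_k$ denotes the vector whose entries are zero except for a $1$ in the $k$-th slot. Hence $\frac{1}{n!}\frac{d^n}{dz^n} X^{(1)}(z)\big|_{z=0} = e_{2n}$ and $\frac{1}{n!}\frac{d^n}{dz^n} X^{(2)}(z)\big|_{z=0} = e_{2n+1}$, so by differentiating the identities $MX^{(1)}(z)=0$ and $MX^{(2)}(z)=0$ an arbitrary number of times and evaluating at $z=0$ one obtains $M e_{2n}=0$ and $M e_{2n+1}=0$ for every $n\in\N_0$. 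Since these vectors are precisely the columns of the identity matrix, every column of $M$ vanishes, so $M=0$.

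There is essentially no obstacle here; the only point that requires a line of justification is the polynomial-identity step, namely that $MX^{(a)}(z)=0$ as a polynomial identity in $z$ entails the vanishing of each Taylor coefficient. This is standard and parallel to the single-vector case already handled in Lemma \ref{lemma:thecnical_lemma_X}; the new ingredient is simply the complementary roles of the even and odd index subsets covered by the two partial vectors.
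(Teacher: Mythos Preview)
Your proof is correct and follows essentially the same approach as the paper: differentiate the identities $MX^{(a)}(z)=0$ repeatedly and evaluate at $z=0$ to obtain $M e_{2n}=0$ and $M e_{2n+1}=0$, whence every column of $M$ vanishes. The paper's version is nearly identical in both method and presentation.
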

\begin{proof}
	We have that 
	\begin{align*}
		M\frac{1}{n!}\frac{\d ^n X^{(1)}(z)}{\d z^n}\Big|_{z=0}=&0, & 	M\frac{1}{n!}\frac{\d ^n X^{(2)}(z)}{\d z^n}\Big|_{z=0}=&0.
	\end{align*}
Observe  that $\frac{1}{n!}\frac{\d ^n X^{(1)}(z)}{\d z^n}\big|_{z=0}$ is the vector with all its entries equal to zero but for a $1$ in the $2n$-th entry.
Similarly, $\frac{1}{n!}\frac{\d ^n X^{(2)}(z)}{\d z^n}\big|_{z=0}$ is the vector with all its entries equal to zero but for a $1$ in the $(2n+1)$-th entry. Therefore, all the columns of $M$ have $0$ all its entries.
\end{proof}
 \begin{lemma}
The following equations are satisfied
 \begin{align}\label{compJo}
 	\hat{T}^\intercal\omega^{(r)}=\omega^{(r)}T^\intercal \\ 
        \label{compJoq}\check{T}^\intercal\omega^{(q)}=\omega^{(q)}T^\intercal \\  
        \label{compJos}\tilde{T}^\intercal\omega^{(s)}=\omega^{(s)}T^\intercal
  \end{align} 
 holds.
 \end{lemma}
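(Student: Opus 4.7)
The plan is to verify each of the three identities by applying both sides to the type I polynomial vector $A^{(a)}(z)$ for $a\in\{1,2\}$, and then invoking Lemma \ref{lem:tecnical_typeI} to strip off the polynomial vectors and conclude the identity at the matrix level. This mirrors the pattern already used in the proof of \eqref{eq:comp1Psi}, just now carried out on the type I side instead of the type II side.

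Concretely, for \eqref{compJo} set $\omega^{(r)}={}_r\omega^{(1)}$, so that $\hat{}$ denotes the shift $b^{(1)}_r\mapsto b^{(1)}_r+1$. On the right-hand side, the recursion \eqref{eq:recursion} gives $\omega^{(r)}T^\intercal A^{(a)}(z)=z\,\omega^{(r)}A^{(a)}(z)$. The connection formula \eqref{con:io1} handles the case $a=1$: $\omega^{(r)}A^{(1)}(z)=(z+b^{(1)}_r)\hat A^{(1)}(z)$. For $a=2$, going back to the definition \eqref{def:io} and using $\Lambda^{(1)}X^{(2)}=0$, one finds $\omega^{(r)}A^{(2)}(z)=b^{(1)}_r\hat A^{(2)}(z)$. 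Both cases are summarised as $\omega^{(r)}A^{(a)}(z)=(z\delta_{a,1}+b^{(1)}_r)\hat A^{(a)}(z)$, and consequently
\begin{align*}
\omega^{(r)}T^\intercal A^{(a)}(z)=z\,(z\delta_{a,1}+b^{(1)}_r)\,\hat A^{(a)}(z).
\end{align*}
The left-hand side is treated symmetrically: $\omega^{(r)}$ first produces the factor $(z\delta_{a,1}+b^{(1)}_r)$ and replaces $A^{(a)}$ by $\hat A^{(a)}$, after which the recursion $\hat T^\intercal\hat A^{(a)}(z)=z\,\hat A^{(a)}(z)$ for the shifted polynomials yields the same quantity. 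Thus $M\coloneq\hat T^\intercal\omega^{(r)}-\omega^{(r)}T^\intercal$ annihilates $A^{(a)}(z)$ for $a\in\{1,2\}$; writing $A^{(a)}=H^{-1}\tilde S\,X^{(a)}$, Lemma \ref{lem:tecnical_typeI} applied to $MH^{-1}\tilde S$ forces $MH^{-1}\tilde S=0$, and invertibility of $H^{-1}\tilde S$ gives $M=0$.

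The remaining identities are obtained by the same mechanism: for \eqref{compJoq} use $\omega^{(q)}={}_q\omega^{(2)}$ together with \eqref{con:io1} at $a=2$ and the vanishing $\Lambda^{(2)}X^{(1)}=0$; for \eqref{compJos} use $\omega^{(s)}=\omega_s$ and \eqref{con:oi1}, noting that $\Lambda^2 X^{(a)}=zX^{(a)}$ for both $a\in\{1,2\}$, so that the connection factor $z+c_s-1$ appears uniformly on both vectors. No serious obstacle is expected; the only subtle point is to verify that each connection matrix also acts on the "opposite" type I vector with a constant scalar factor (coming from the vanishing of the relevant $\Lambda^{(a)}$ on $X^{(b)}$, $b\neq a$), which is a direct computation from the defining formulas \eqref{def:io} and \eqref{def:oi}.
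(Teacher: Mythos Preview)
Your proof is correct and follows essentially the same route as the paper: apply both sides to $A^{(a)}(z)$, use the recursion and the connection formulas, then invoke Lemma~\ref{lem:tecnical_typeI} after factoring out $H^{-1}\tilde S$. Your treatment is in fact more careful than the paper's in distinguishing the action of $\omega^{(r)}$ on $A^{(2)}$ (where the scalar factor is $b^{(1)}_r$ rather than $z+b^{(1)}_r$), a point the paper glosses over but which does not affect the conclusion.
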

 \begin{proof}From eigenvalue equation for $T^\intercal$ and $A^{(a)}$, ${A^{(a)}}^\intercal T =z{A^{(a)}}^\intercal $, and \eqref{con:io1} for this case, $\hat{A}^{(a)}(z)=\frac{\omega^{(r)}}{z+\hat{d}}A^{(a)}(z)$ it can be seen 
 \begin{align*}
%{\hat{A}^{(a)}}^\intercal  
{{}{\hat A}^{(a)}}^\intercal	\hat{T}=z{{}{\hat A}^{(a)}}^\intercal,
 \end{align*} 
so that
 \begin{align*}
 	\hat{T}^\intercal\frac{\omega^{(r)}}{z+\hat{d}}A^{(a)}(z)=z\frac{\omega^{(r)}}{z+\hat{d}}A^{(a)}(z)=\frac{\omega^{(r)}}{z+\hat{d}}T^\intercal A^{(a)}.
  \end{align*} 
Hence, if we denote $M=(\hat{T}^\intercal \omega^{(r)}-\hat{T}^\intercal\omega^{(r)}T)H^{-1}\tilde S$ we find $MX^{(a)}=0$ so that, according to Lemma \ref{lem:tecnical_typeI}, we get that $M=0$ and as $H^{-1}\tilde S$ is a lower triangular invertible matrix we conclude the result.
 and finally we get
 \begin{align*}
       \hat{T}^\intercal\omega^{(r)}=\omega^{(r)}T^\intercal. \end{align*}
 For equations \eqref{compJoq} and \eqref{compJos} it can be proven analogously.
 \end{proof}

 \begin{teo}The six  functions  $u, v, f, g, F $ and $G$ fulfill the following system of six 2D nonlinear  difference equations 
 
\begin{enumerate}
	\item 	 For the shifts {}$ \tilde{}$ and {} $\bar{}$:
\begin{subequations}\label{comps}	\begin{align}\label{comps:p1d1}
 \tilde{d}(G-\bar{F}+\tilde{\bar{F}}-\tilde{G}+\tilde{\bar{f}}(\bar{g}-\bar{\tilde{g}})+\tilde{g}(\tilde{\bar{f}}-\bar{f}))&=\frac{\bar{v}}{\tilde{v}}-\frac{\bar{\bar{u}}}{\tilde{\bar{u}}},\\
 \label{comps:p2d1}
\tilde{d}(F-G+\tilde{G}-\tilde{F}+\tilde{g}(\bar{f}-\tilde{\bar{f}})+\tilde{f}(\tilde{g}-g))&=\frac{\bar{u}}{\tilde{u}}-\frac{\bar{v}}{\tilde{v}},\\ \label{comps:p1d2}
 \frac{\bar{\bar{u}}}{\tilde{\bar{u}}}(\tilde{g}-\tilde{\bar{f}})+\tilde{d}(\bar{g}-\tilde{\bar{g}})(\tilde{G}-\tilde{\bar{F}}-\tilde{\bar{f}}(\tilde{\bar{f}}-\tilde{\bar{g}}))+\tilde{d}\frac{\tilde{\bar{\bar{u}}}}{\tilde{\bar{u}}}&=\tilde{d}\frac{\bar{\bar{u}}}{\bar{u}}+\frac{\bar{\bar{u}}}{\tilde{\bar{u}}}(\bar{g}-\bar{\bar{f}})+\tilde{d}(\bar{f}-\tilde{\bar{f}})(\bar{F}-\bar{G}-\bar{g}(\bar{g}-\bar{\bar{f}})),\\ \label{comps:p2d2}
  \frac{\bar{v}}{\tilde{v}}(\tilde{f}-\tilde{g})+\tilde{d}(\bar{f}-\tilde{\bar{f}})(\tilde{F}-\tilde{G}-\tilde{g}(\tilde{g}-\tilde{\bar{f}}))+\tilde{d}\frac{\tilde{\bar{v}}}{\tilde{v}}&=\tilde{d}\frac{\bar{v}}{v}+\frac{\bar{v}}{\tilde{v}}(\bar{f}-\bar{g})+\tilde{d}(g-\tilde{g})(G-\bar{F}-\bar{f}(\bar{f}-\bar{g})), \\ \label{comps:p1d3} \frac{\bar{\bar{v}}}{\tilde{\bar{v}}}(\tilde{G}-\tilde{\bar{F}}-\tilde{\bar{f}}(\tilde{\bar{f}}-\tilde{\bar{g}}))+\tilde{d}\frac{\tilde{\bar{\bar{u}}}}{\tilde{\bar{u}}}(\bar{f}-\tilde{\bar{f}})&=\tilde{d}\frac{\bar{\bar{v}}}{\bar{v}}(\bar{f}-\tilde{\bar{f}})+\frac{\bar{\bar{u}}}{\tilde{\bar{u}}}(\bar{G}-\bar{\bar{F}}-\bar{\bar{f}}(\bar{\bar{f}}-\bar{\bar{g}})), \\ \label{comps:p2d3}
  \frac{\bar{\bar{u}}}{\tilde{\bar{u}}}(\tilde{F}-\tilde{G}-\tilde{g}(\tilde{g}-\tilde{\bar{f}}))+\tilde{d}\frac{\tilde{\bar{v}}}{\tilde{v}}(\bar{g}-\tilde{\bar{g}})&=\tilde{d}\frac{\bar{\bar{u}}}{\bar{u}}(g-\tilde{g})+\frac{\bar{v}}{\tilde{v}}(\bar{F}-\bar{G}-\bar{g}(\bar{g}-\bar{\bar{f}})).
\end{align}
\end{subequations}
\item For the shifts {}$\check{}$ and {} $\bar{}$:
\begin{subequations}\label{compq-}	
	 \begin{align}\label{compq:p1d1}
		\frac{\bar{v}}{\check{v}}&=\check{d}(G-\bar{F}-\check{G}+\check{\bar{F}}+\check{\bar{f}}(\bar{g}-\check{\bar{g}})+\check{g}(\check{\bar{f}}-\bar{f})),\\
		\label{compq:p2d1} 
		\check{d}(F-G-\check{F}+\check{G}+\check{g}(\bar{f}-\check{\bar{f}})+\check{f}(\check{g}-g)&=-\frac{\bar{v}}{\check{v}},\\ \label{compq:p1d2}\frac{\check{\bar{\bar{u}}}}{\check{\bar{u}}}+(\bar{g}-\check{\bar{g}})(\check{G}-\check{\bar{F}}-\check{\bar{f}}(\check{\bar{f}}-\check{\bar{g}}))&=\frac{\bar{\bar{u}}}{\bar{u}}+(\bar{f}-\check{\bar{f}})(\bar{F}-\bar{G}-\bar{g}(\bar{g}-\bar{\bar{f}})),\\ \label{compq:p2d2}
		\frac{\bar{v}}{\check{v}}(\check{f}-\check{g})+\check{d}(\bar{f}-\check{\bar{f}})(\check{F}-\check{G}-\check{g}(\check{g}-\check{\bar{f}}))+\check{d}\frac{\check{\bar{v}}}{\check{v}}&=\check{d}\frac{\bar{v}}{v}+\check{d}(g-\check{g})(G-\bar{F}-\bar{f}(\bar{f}-\bar{g}))+\frac{\bar{v}}{\check{v}}(\bar{f}-\bar{g}),\\ \label{compq:p1d3} \frac{\bar{\bar{v}}}{\check{\bar{v}}}(\check{G}-\check{\bar{F}}-\check{\bar{f}}(\check{\bar{f}}-\check{\bar{g}}))+\check{d}\frac{\check{\bar{\bar{u}}}}{\check{\bar{u}}}(\bar{\bar{f}}-\check{\bar{\bar{f}}})&=\check{d}\frac{\bar{\bar{v}}}{\bar{v}}(\bar{f}-\check{\bar{f}}), \\ \label{compq:p2d3} \check{d}\frac{\check{\bar{v}}}{\check{v}}(\bar{g}-\check{\bar{g}})&=\check{d}\frac{\bar{\bar{u}}}{\bar{u}}(g-\check{g})+\frac{\bar{v}}{\check{v}}(\bar{F}-\bar{G}-\bar{g}(\bar{g}-\bar{\bar{f}}))  .\end{align}
	\end{subequations}
\item For the shifts {}$ \hat{}$ and {} $\bar{}$:
\begin{subequations}\label{compr-}
	   \begin{align}
		\label{compr:p1d1}
		\hat{d}(G-\bar{F}-\hat{G}+\hat{\bar{F}}+\hat{g}(\hat{\bar{f}}-\bar{f})+\hat{\bar{f}}(\bar{g}-\hat{\bar{g}}))&=-\frac{\bar{\bar{u}}}{\hat{\bar{u}}},\\
		\label{compr:p2d1}
		\hat{d}(F-G-\hat{\bar{F}}+\hat{G}+\hat{f}(\hat{g}-g)+\hat{g}(\bar{f}-\hat{\bar{f}})&=\frac{\bar{u}}{\hat{u}},\\
		\label{compr:p1d2}
		\hat{d}\left(\frac{\hat{\bar{\bar{u}}}}{\hat{\bar{u}}}+(\bar{g}-\hat{\bar{g}})(\hat{G}-\hat{\bar{F}}-\hat{\bar{f}}(\hat{\bar{f}}-\hat{\bar{g}}))\right)+\frac{\bar{\bar{u}}}{\hat{\bar{u}}}(\hat{g}-\hat{\bar{f}})&=\frac{\bar{\bar{u}}}{\hat{\bar{u}}}(\bar{g}-\bar{\bar{f}})+\hat{d}(\frac{\bar{\bar{u}}}{\bar{u}}+(\bar{f}-\hat{\bar{f}})(\bar{F}-\bar{G}-\bar{g}(\bar{g}-\bar{\bar{f}}))),\\
		\label{compr:p2d2}
		\frac{\hat{\bar{v}}}{\hat{v}}+(\bar{f}-\hat{\bar{f}})(\hat{F}-\hat{G}-\hat{g}(\hat{g}-\hat{\bar{f}}))&=\frac{\bar{v}}{v}+(g-\hat{g})(G-\bar{F}-\bar{f}(\bar{f}-\bar{g})),\\
         \label{compr:p1d3}
         \hat{d}\frac{\hat{\bar{u}}}{\hat{u}}(\bar{f}-\hat{\bar{f}})&=\hat{d}\frac{\bar{v}}{v}(f-\hat{f})+\frac{\bar{u}}{\hat{u}}(G-\bar{F}-\bar{f}(\bar{f}-\bar{g})), \\
         \label{compr:p2d3}
         \frac{\bar{v}}{\hat{v}}(\hat{F}-\hat{G}-\hat{g}(\hat{g}-\hat{\bar{f}}))+\hat{d}\frac{\hat{\bar{v}}}{\hat{v}}(\bar{g}-\hat{\bar{g}})&=\hat{d}\frac{\bar{\bar{u}}}{\bar{u}}(g-\hat{g}).
	\end{align}
\end{subequations}
\end{enumerate}
 \end{teo}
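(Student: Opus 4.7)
The plan is to extract the six scalar equations of each case diagonal-by-diagonal from the matrix compatibility identities \eqref{compJo}, \eqref{compJoq}, \eqref{compJos} established in the preceding lemma, that is, $\hat T^\intercal\omega^{(r)}=\omega^{(r)}T^\intercal$, $\check T^\intercal\omega^{(q)}=\omega^{(q)}T^\intercal$ and $\tilde T^\intercal\omega^{(s)}=\omega^{(s)}T^\intercal$. Each of these is an identity between banded matrices and, once unpacked, contains exactly the desired scalar content.

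First I would substitute the explicit band expansions available earlier in the paper. On the recursion side $T^\intercal=\gamma\Lambda^2+\beta\Lambda+\alpha+\Lambda^\intercal$, so that $T^\intercal$ has one subdiagonal and two superdiagonals; on the connection side, taking the tilde-shift case as representative, $\omega^{(s)}=\tilde d\,I+\omega_s^{[1]}\Lambda+\omega_s^{[2]}\Lambda^2$ with the diagonal coefficients $\omega_s^{[1]}$, $\omega_s^{[2]}$ expressed through $H$ and $\tilde S^{[1]}$ by the structural proposition for $\omega_j$, and analogously for $\omega^{(r)}$ and $\omega^{(q)}$. Multiplying out, both $\omega T^\intercal$ and $\tilde T^\intercal\omega$ are banded with non-trivial diagonals running from the first subdiagonal to the fourth superdiagonal, so in principle six diagonal matrix identities can be extracted.

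Next I would identify which of those diagonals actually carry non-trivial content. The first subdiagonal is the trivial identity $\tilde d\,\Lambda^\intercal=\tilde d\,\Lambda^\intercal$; the main diagonal, after substituting $\alpha=\mathfrak a_+S^{[1]}-S^{[1]}$ and the explicit form $\omega^{[1]}=d(S^{[1]}-\Theta S^{[1]})$ with $\Theta$ the corresponding parameter shift, collapses to a tautology relating $d(\Theta\alpha-\alpha)$ with $\omega^{[1]}-\mathfrak a_+\omega^{[1]}$; and the fourth superdiagonal becomes a tautology upon combining $\gamma=H^{-1}\mathfrak a_-^2 H$ with the explicit form of $\omega^{[2]}$, using the cancellation $\mathfrak a_-^2 I^{(a)}=I^{(a)}$ which occurs in the hat and check cases. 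Only the first, second and third superdiagonals therefore carry non-trivial information; applying to each of them the projectors $I^{(1)}$ and $I^{(2)}$ then splits it into two independent scalar recurrences, producing the \emph{p1}/\emph{p2} labels of the statement and totalling exactly six equations per case.

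Finally I would use \eqref{eq:alphabetagamma} to re-express $\alpha,\beta,\gamma$ in terms of $H$, $S^{[1]}$ and $S^{[2]}$, noting that $F$ and $G$ enter only through $\beta=\mathfrak a_+S^{[2]}-S^{[2]}-S^{[1]}\mathfrak a_-\alpha$. With the renaming $H_{2n}=u$, $H_{2n+1}=v$, $S^{[1]}_{2n}=f$, $S^{[1]}_{2n+1}=g$, $S^{[2]}_{2n}=F$, $S^{[2]}_{2n+1}=G$ and the interpretation of $\mathfrak a_-^2$ on a diagonal matrix as the bar-shift $n\mapsto n+2$, the three non-trivial diagonal identities turn directly into the six equations of \eqref{comps}, \eqref{compq-} or \eqref{compr-}. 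The three cases are parallel, and in addition the hat and check cases are exchanged by $I^{(1)}\leftrightarrow I^{(2)}$, which under the renaming becomes $u\leftrightarrow v$, $f\leftrightarrow g$, $F\leftrightarrow G$, so once one of them has been worked out the other follows by symmetry. The main obstacle is purely computational: verifying the vanishing of the main-diagonal and fourth-superdiagonal identities after the substitutions, carefully tracking the $\mathfrak a_-$ and $\mathfrak a_+$ shifts inside the products $\omega T^\intercal$ and $\tilde T^\intercal\omega$ (and similarly for the other cases), and then reshaping the three non-trivial diagonal equations algebraically into the precise forms displayed in the statement.
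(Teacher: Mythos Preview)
Your proposal is correct and follows essentially the same route as the paper: both arguments expand the compatibility identities $\tilde T^\intercal\omega^{(s)}=\omega^{(s)}T^\intercal$, $\check T^\intercal\omega^{(q)}=\omega^{(q)}T^\intercal$, $\hat T^\intercal\omega^{(r)}=\omega^{(r)}T^\intercal$ diagonal by diagonal, discard the trivial diagonals, and split each of the three remaining ones via the projectors $I^{(1)}$, $I^{(2)}$ after rewriting $\alpha,\beta,\gamma$ through $H$, $S^{[1]}$, $S^{[2]}$. Your account is in fact slightly more explicit than the paper's own proof about which diagonals are tautological and why, and about the $\hat{}\leftrightarrow\check{}$ symmetry.
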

 \begin{proof}From \eqref{compJos} we obtained three non-null diagonals. From first diagonal 
 \begin{align*}
     \tilde{d}(\mathfrak a_{+}S^{[2]}-S^{[2]}+\tilde{S}^{[2]}-\mathfrak a_{+}\tilde{S}^{[2]}+\tilde{S}^{[1]}(\mathfrak a_{-}S^{[1]}-\mathfrak a_{-}\tilde{S}^{[1]})+\mathfrak a_{+}\tilde{S}^{[1]}(\tilde{S}^{[1]}-S^{[1]}))=\frac{\mathfrak a_{-}H}{\mathfrak a_{+}\tilde{H}}-\frac{\mathfrak a_{-}^2H}{\tilde{H}},
 \end{align*}
 if we take $I^{(1)}$ part we obtain \eqref{comps:p1d1} and from $I^{(2)}$ we obtain \eqref{comps:p2d1};
 from second diagonal
\begin{multline*}
    (\mathfrak a_{+}\tilde{S}^{[1]}-\tilde{S}^{[1]})\frac{\mathfrak a_{-}^2H}{\tilde{H}}+\tilde{d}(\mathfrak a_{-}S^{[1]}-\mathfrak a_{-}\tilde{S}^{[1]})(\mathfrak a_{+}\tilde{S}^{[2]}-\tilde{S}^{[2]}-\tilde{S}^{[1]}(\tilde{S}^{[1]}-\mathfrak a_{-}\tilde{S}^{[1]}))+\tilde{d}\frac{\mathfrak a_{-}^2\tilde{H}}{\tilde{H}}\\
   =\tilde{d}\frac{\mathfrak a_{-}^2H}{H}+
   \frac{\mathfrak a_{-}^2 H}{\tilde{H}}(\mathfrak a_{-}S^{[1]}
   -\mathfrak a_{-}^2S^{[1]})+\tilde{d}(S^{[1]}-\tilde{S}^{[1]})(S^{[2]}-\mathfrak a_{-}S^{[2]}-\mathfrak a_{-}S^{[1]}(\mathfrak a_{-}S^{[1]}-\mathfrak a_{-}^2S^{[1]})),
\end{multline*}
from part coming from  $I^{(1)}$ it can be obtained \eqref{comps:p1d2} and from part of $I^{(2)}$ it can be obtained \eqref{comps:p2d2}; from third superdiagonal
\begin{multline*}
   \frac{\mathfrak a^3_{-}H}{\mathfrak a_{-}\tilde{H}}(\mathfrak a_{+}\tilde{S}^{[2]}-\tilde{S}^{[2]}-\tilde{S}^{[1]}(\tilde{S}^{[1]}-\mathfrak a_{-}\tilde{S}^{[1]}))+\tilde{d}\frac{\mathfrak a_{-}^2\tilde{H}}{\tilde{H}}(\mathfrak a_{-}^2S^{[1]}-\mathfrak a_{-}^2\tilde{S}^{[1]})=\tilde{d}\frac{\mathfrak a_{-}^3H}{\mathfrak a_{-}H}(S^{[1]}-\tilde{S}^{[1]})+\\+\frac{\mathfrak a_{-}^2H}{\tilde{H}}(\mathfrak a_{-}S^{[2]}-\mathfrak a_{-}^2S^{[2]}-\mathfrak a_{-}^2S^{[1]}(\mathfrak a_{-}^2S^{[1]}-\mathfrak a_{-}^3S^{[1]})),
\end{multline*}
 from part of $I^{(1)}$ it can be obtained \eqref{comps:p1d3} and from part of $I^{(2)}$ it can be obtained \eqref{comps:p2d3}. 
From \eqref{compJoq} we can obtain three non-null diagonals. From first superdiagonal
\begin{equation*}
    I^{(1)}\frac{\mathfrak a_{-}H}{\mathfrak a_{+}\check{H}}-I^{(2)}\frac{\mathfrak a_{-}^2H}{\check{H}}=\check{d}(\mathfrak a_{+}S^{[2]}-S^{[2]}-\mathfrak a_{+}\check{S}^{[2]}+\check{S}^{[2]}+\check{S}^{[1]}(\mathfrak a_{-}S^{[1]}-\mathfrak a_{-}\check{S}^{[1]})+\mathfrak a_{+}\check{S}^{[1]}(\check{S}^{[1]}-S^{[1]}))
\end{equation*}
from part of $I^{(1)}$ it can be obtained \eqref{compq:p1d1} and from part of $I^{(2)}$ it can be obtained \eqref{compq:p2d1}; from second superdiagonal
\begin{multline*}
    I^{(2)}\frac{\mathfrak a_{-}^2H}{\check{H}}(\mathfrak a_{+}\check{S}^{[1]}-\check{S}^{[1]})+\check{d}(\mathfrak a_{-}S^{[1]}-\mathfrak a_{-}\check{S}^{[1]})(\mathfrak a_{+}\check{S}^{[2]}-\check{S}^{[2]}-\check{S}^{[1]}(\check{S}^{[1]}-\mathfrak a_{-}\check{S}^{[1]}))+\check{d}\frac{\mathfrak a_{-}^2\check{H}}{\check{H}}\\=\check{d}\frac{\mathfrak a_{-}^2H}{H}+\check{d}(S^{[1]}-\check{S}^{[1]})(S^{[2]}-\mathfrak a_{-}{S}^{[2]}-\mathfrak a_{-}{S}^{[1]}(\mathfrak a_{-}{S}^{[1]}-\mathfrak a_{-}^2{S}^{[1]}))+I^{(2)}\frac{\mathfrak a_{-}^2H}{\check{H}}(\mathfrak a_{-}S^{[1]}-\mathfrak a_{-}^2S^{[1]})
\end{multline*}
from part of $I^{(1)}$ it can be obtained \eqref{compq:p1d2} and from part of $I^{(2)}$ it can be obtained \eqref{compq:p2d2}; and from third superdiagonal 
\begin{multline*}
    I^{(1)}\frac{\mathfrak a_{-}^3H}{\mathfrak a_{-}\check{H}}(\mathfrak a_{+}\check{S}^{[2]}-\check{S}^{[2]}-\check{S}^{[1]}(\check{S}^{[1]}-\mathfrak a_{-}\check{S}^{[1]}))+\check{d}\frac{\mathfrak a_{-}^2 \check{H}}{\check{H}}(\mathfrak a_{-}^2S^{[1]}-\mathfrak a_{-}^2\check{S}^{[1]})=\\=\check{d}(S^{[1]}-\check{S}^{[1]})\frac{\mathfrak a_{-}^3H}{\mathfrak a_{-}H}+I^{(2)}\frac{\mathfrak a_{-}^2H}{\check{H}}(\mathfrak a_{-}S^{[2]}-\mathfrak a_{-}^2S^{[2]]}-\mathfrak a_{-}^2S^{[1]}](\mathfrak a_{-}^2 S^{[1]}-\mathfrak a_{-}^3S^{[1]}))
\end{multline*}
from part of $I^{(1)}$ we can obtain \eqref{compq:p1d3} and from $I^{(2)}$ we can obtain \eqref{compq:p2d3}.
\end{proof}
\begin{rem}
	Notice that 
\begin{align*}
	F&=\frac{\tau^2_{2n}}{\tau_{2n}}, & 	G&=\frac{\tau^2_{2n+1}}{\tau_{2n+1}}, 
\end{align*}
As we did in Proposition \ref{rem:tau_Nijhoff_Capel} for the multiple Nijhoff--Capel Toda system we can get $\tau$-function representation of the system \eqref{comps}.
\end{rem}

\section{Laguerre--Freud equations for generalized multiple Charlier and Meixner II}
We will begin by applying the previous developments to the multiple orthogonal polynomials discussed in \cite{Arvesu}, specifically the multiple Charlier and Meixner II sequences of multiple orthogonal polynomials. It has been proven in \cite{Arvesu} that in all these cases, the system is AT, indicating that we are dealing with perfect systems. Subsequently, we will proceed with the study of generalizations of these two cases. The original polynomials were found in \cite{charlier} and \cite{meixner}, respectively.

We will demonstrate that it is possible to discuss Laguerre--Freud equations in all these examples. In this multiple scenario, we extend the concept of Laguerre--Freud equations to encompass nonlinear equations satisfied by the coefficients of the recursion relation in the form:
\begin{subequations}\label{eq:LF}
	\begin{align}
\label{eq:LF_gamma}	\gamma_n&=\mathscr F_n(\gamma_{n-1},\beta_{n-1},\alpha_{n-1},\dots),\\ 
\label{eq:LF_beta}		\beta_n&=\mathscr G_n(\gamma_n,\gamma_{n-1},\beta_{n-1}, \alpha_{n-1}, \dots), \\ 
\label{eq:LF_alpha}		\alpha_n&=\mathscr H_n(\gamma_n,\beta_n,\gamma_{n-1},\beta_{n-1},\alpha_{n-1},\dots).
\end{align}
\end{subequations}
\subsection{Charlier multiple orthogonal polynomials}
In this case we set
\begin{align*}w^{(1)}(k)&=\frac{{\eta^{(1)}}^k}{k!}, &w^{(2)}(k)&=\frac{{\eta^{(2)}}^k}{k!},\end{align*}
if ${\eta^{(1)}},{\eta^{(2)}}>0$, it is an AT system,
and polynomials $\theta$, $\sigma^{(1)}$ and $\sigma^{(2)}$ are:
\begin{align*}
	\theta(k)&=k, &\sigma^{(1)}(k)&={\eta^{(1)}}, &\sigma^{(2)}(k)&={\eta^{(2)}}.
\end{align*}
Now, $\sigma^{(1)}$ and $\sigma^{(2)}$ are degree zero polynomials and $\theta$ is a degree  one polynomial, consequently there are not non-zero subdiagonals and there is only one non-zero superdiagonal, this is:
\begin{align*}
	\Psi=\psi^{(0)}+\psi^{(1)}.
\end{align*}
To obtain Laguerre-Freud matrix firstly it is used $\Psi=\sigma^{(1)}(T^\intercal_1)\Pi^\intercal_1+\sigma^{(2)}(T^\intercal_2)\Pi^\intercal_2$, from this expression it is obtained:
\begin{align*}
	\psi^{(0)}&=I^{(1)}{\eta^{(1)}}+I^{(2)}{\eta^{(2)}}, &  \psi^{(1)}&={\eta^{(1)}}D_1+{\eta^{(2)}}D_2.
\end{align*}
From the expression $\Psi=\Pi^{-1}\theta(T)$ we get
\begin{align*}
\psi^{(1)}&=I, & \psi^{(0)}&=\alpha-\mathfrak a_{+}D.
\end{align*}
Attending to the above results   the Laguerre-Freud structure matrix is
\begin{align*}
	\Psi={\eta^{(1)}} I^{(1)}+{\eta^{(2)}} I^{(2)}+\Lambda.
\end{align*}
Let us discuss  $[\Psi,T]=\Psi$. First we obtain
that  $[\Psi,T]=\Lambda^\intercal\tilde{\psi}^{(-1)}+\tilde{\psi}^{(0)}+\tilde{\psi}^{(1)}\Lambda$, i.e.,
\begin{align*}
	\tilde{\psi}^{(-1)}&=\gamma-\mathfrak a_{+}\gamma-\beta({\eta^{(1)}}-{\eta^{(2)}})(I^{(1)}-I^{(2)}), \\ \tilde{\psi}^{(0)}&=\beta-\mathfrak a_{+}\beta, \\ \tilde{\psi}^{(1)}&=\mathfrak a_{-}\alpha-\alpha+({\eta^{(1)}}-{\eta^{(2)}})(I^{(1)}-I^{(2)}), \end{align*}
equating  these to Laguerre--Freud matrix $\Psi$ we obtain three non-trivial equations:
\begin{align*}\mathfrak a_{-}\alpha-\alpha&=I+({\eta^{(1)}}-{\eta^{(2)}})(I^{(2)}-I^{(1)}), \\ \beta-\mathfrak a_{+}\beta&={\eta^{(1)}} I^{(1)}+{\eta^{(2)}}I^{(2)}, \\ \gamma-\mathfrak a_{+}\gamma&=\beta({\eta^{(1)}}-{\eta^{(2)}})(I^{(1)}-I^{(2)}).\end{align*}
From compatibility equations we get
\begin{align*}
	\alpha_{2n}&=2n+\alpha_0, &\alpha_{2n+1}&=2n+1+{\eta^{(2)}}-{\eta^{(1)}}+\alpha_0, \\
\beta_{2n}&=n({\eta^{(1)}}+{\eta^{(2)}}), &\beta_{2n+1}&=n(({\eta^{(1)}}+{\eta^{(2)}})+{\eta^{(1)}},\\ \gamma_{2n+1}&=n{\eta^{(2)}}({\eta^{(2)}}-{\eta^{(1)}}), & \gamma_{2n+2}&=(n+1){\eta^{(1)}}({\eta^{(1)}}-{\eta^{(2)}}), 
\end{align*}
where $n \in \lbrace 0,1,2,\dots \rbrace$. These results have been found previously in  \cite{Arvesu}.

\subsection{Meixner of second kind multiple orthogonal polynomials}
In this case we have:
\begin{align*}\theta(k)&=k,&\sigma^{(1)}(k)&=\eta(k+b_1),&\sigma^{(2)}(k)&=\eta(k+b_2).\end{align*}
The weights are:
\begin{align*}w^{(1)}(k)&=(b_1)_k\frac{\eta^k}{k!},  &w^{(2)}(k)&=(b_2)_k\frac{\eta^k}{k!},\end{align*}
if $b_1,b_2,\eta>0$, according with Lemma \ref{lemma:ATSystems}, it is an AT system.\\
The Laguerre-Freud matrix has four non-trivial diagonals:
\begin{equation*}\Psi=(\Lambda^\intercal)^2\psi^{(-2)}+\dots+\psi^{(1)}\Lambda.\end{equation*}
The main diagonal and the first super diagonal can be obtained by means of \(\Psi=\Pi^{-1}\theta(T)\):
\begin{align*}\psi^{(1)}&=I,& \psi^{(0)}&=\alpha-\mathfrak a_{+}D.\end{align*}
Both subdiagonals are obtained through \(\Psi=\sigma^{(1)}(T_1^\intercal)\Pi_1^\intercal+\sigma^{(2)}(T^\intercal_2)\Pi_2^\intercal\):
\begin{align*}\psi^{(0)}&=\eta(\alpha+\mathfrak a_{+}^2D_1^{[1]}I^{(1)}+\mathfrak a_{+}^2D_2^{[1]}I^{(2)}+b_1I^{(1)}+b_2I^{(2)}), & \psi^{(-1)}&=\eta\beta,&\psi^{(-2)}&=\eta\gamma.\end{align*}
Using Compatibility I we can obtain three non trivial equations from first superdiagonal, first subdiagonals and the main diagonal, respectively:
\begin{align*}\mathfrak a_{-}\alpha-\alpha&=\frac{1}{1-\eta}(I+\eta(I^{(2)}+(b_1-b_2)(I^{(2)}-I^{(1)})),\\ \mathfrak a_{+}\gamma-\gamma&=\frac{\eta}{\eta-1}\beta(I^{(1)}+(b_1-b_2)(I^{(1)}-I^{(2)})),\\\mathfrak a_{+}\beta-\beta&=\frac{1}{\eta-1}(\alpha-\mathfrak a_{+}D).\end{align*}
Equating both expressions for $\psi^{(0)}$ we obtain expressions for $\alpha$:
\begin{align*}\alpha_{2n}&=\frac{1}{1-\eta}2n+\frac{\eta}{1-\eta}(n+b_1),& \alpha_{2n+1}&=\frac{1}{1-\eta}(2n+1)+\frac{\eta}{1-\eta}(n+b_2)\end{align*}

From compatibility conditions we obtain:
\begin{align*}
 \beta_{2n}&=\frac{\eta}{(1-\eta)^2}(3n^2+n(b_1+b_2-2)), & \beta_{2n+1}&=\frac{\eta}{(1-\eta)^2}(3n^2+n(b_1+b_2+1)+b_1),\\ \gamma_{2n}&=\frac{\eta^2}{(1-\eta)^3}n(n+b_1-1)(n+b_1-b_2), &\gamma_{2n+1}&=\frac{\eta^2}{(1-\eta)^3}n(n+b_2-1)(n+b_2-b_1).\end{align*}

As we can see, results for $\alpha_{2n}, \alpha_{2n+1}, \beta_{2n}, \beta_{2n+1}, \gamma_{2n}$ and $\gamma_{2n+1}$ agree with results obtained in \cite{Arvesu}.
%From Compatibility II we obtain three non-trivial equations:
%\begin{align*}\mathfrak a_{-}^2\alpha-\alpha&=(\eta+2)I, \\ \mathfrak a_{+}\gamma-\gamma&=\beta(\mathfrak a_{-}\alpha-\alpha-1-\eta),\\\mathfrak a_{+}\beta-\beta&=\frac{\eta}{\eta-1}(\alpha+\mathfrak a_{+}^2D_1^{[1]}+\mathfrak a_{+}^2D_2^{[1]}+b_1I^{(1)}+b_2I^{(2)}).\end{align*}
%Laguerre-Freud equations for this case are:
%\begin{align*}
%    \alpha_{n+1}&=\alpha_n+\frac{1}{1-\eta}(1+\eta(\frac{1}{2}(1+(-1)^{n+1})+(b_1-b_2)(-1)^{n+1})), \\ \beta_{n+1}&=\beta_n+\frac{1}{\eta-1}(n-\alpha_n), \\ \gamma_{n+2}&=\gamma_{n+1}+\frac{\eta}{1-\eta}\beta_{n+1}(\frac{1}{2}((-1)^n+1)+(b_1-b_2)(-1)^n).
%\end{align*}

\subsection{Generalized Charlier multiple orthogonal polynomials}
In this case we have that:
\begin{align*}\theta (k)&=k(k+c),& \sigma^{(1)} (k)&=\eta^{(1)}, &\sigma^{(2)}(k)&=\eta^{(2)}.\end{align*}
The weights are 
	\begin{align*}
	w^{(1)}(k)&=
\frac{1}{(c+1)_k}
	\dfrac{{\eta^{(1)}}^k}{k!}, &w^{(2)}(k)&=\frac{1}{(c+1)_k}\dfrac{{\eta^{(2)}}^k}{k!} ,
\end{align*}
are for $c>-1,\eta^{(1)},\eta^{(2)}>0$, according to the Lemma \ref{lemma:ATSystems}, AT systems.
Due to the $\sigma^{(1)}$, $\sigma^{(2)}$ and $\theta$ polynomials degrees, Laguerre-Freud matrix has three non-trivial diagonals:
\begin{align*}\Psi=\psi^{(0)}+\psi^{(1)}\Lambda+\psi^{(2)}\Lambda^2.\end{align*}
By means of \(\Psi=\Pi^{-1}\theta (T)\) we can obtain expressions for first and second subdiagonals and through \(\Psi=\Pi^\intercal_1\sigma^{(1)}(T_1^\intercal)+\Pi^\intercal_2\sigma^{(2)}(T_2^\intercal)\) the main diagonal is obtained:
\begin{align*}\psi^{(0)}&=\eta^{(1)}I^{(1)}+\eta^{(2)}I^{(2)},& \psi^{(1)}&=\mathfrak a_{-}\alpha+\alpha+c-\mathfrak a_{+}D,  & \psi^{(2)}&=I.\end{align*}
From Compatibility I non-trivial equations for first superdiagonal, main diagonal and first subdiagonal are obtained:
\begin{align*}\mathfrak a_{-}\beta-\mathfrak a_{+}\beta&=(\alpha+I-\mathfrak a_{-}\alpha)(\mathfrak a_{-}\alpha+\alpha+c-\mathfrak a_{+}D)+(\eta^{(1)}-{\eta^{(2)}})(I^{(2)}-I^{(1)}), \\
\eta^{(1)}I^{(1)}+{\eta^{(2)}}I^{(2)}&=\gamma-\mathfrak a_{+}^2\gamma+\beta(\mathfrak a_{-}\alpha+\alpha+c-\mathfrak a_{+}D)-\mathfrak a_{+}\beta(\alpha+\mathfrak a_{+}\alpha+c-\mathfrak a_{+}^2D), \\
\beta (\eta^{(1)}-{\eta^{(2)}})(I^{(1)}-I^{(2)}) &=\gamma(\mathfrak a_{-}^2\alpha+\mathfrak a_{-}\alpha+c-D)-\mathfrak a_{+}\gamma(\alpha+\mathfrak a_{+}\alpha+c-\mathfrak a_{+}^2D). 
\end{align*}
These equations can be written element by element respectively as
\begin{align*}
	\beta_{n+2}-\beta_n&=(\alpha_n+1-\alpha_{n+1})(\alpha_{n+1}+\alpha_n+c-n)+(-1)^{n+1}(\eta^{(1)}-{\eta^{(2)}}),\\
\frac{\eta^{(1)}+{\eta^{(2)}}}{2}+(-1)^n\frac{\eta^{(1)}-{\eta^{(2)}}}{2}&=\gamma_{n+2}-\gamma_n+\beta_{n+1}(\alpha_{n+1}+\alpha_n+c-n)-\beta_n(\alpha_n+\alpha_{n-1}+c-n+1),\\
(-1)^n\beta_{n+1}(\eta^{(1)}-{\eta^{(2)}})&=\gamma_{n+2}(\alpha_{n+2}+\alpha_{n+1}+c-n-1)-\gamma_{n+1}(\alpha_n+\alpha_{n-1}+c-n+1).
\end{align*}
From Compatibility II we obtain three non-trivial equations from first superdiagonal, main diagonal and first subdiagonal, respectively:
\begin{align*}\mathfrak a_{-}\beta-\mathfrak a_{+}\beta&=(\alpha+I-\mathfrak a_{-}\alpha)(\mathfrak a_{-}\alpha+\alpha+c-\mathfrak a_{+}D)+(\eta^{(1)}-{\eta^{(2)}})(I^{(2)}-I^{(1)}), \\ \gamma-\mathfrak a_{+}^2\gamma&=\eta^{(1)}I^{(1)}+{\eta^{(2)}} I^{(2)}+\mathfrak a_{+}\beta(\alpha+\mathfrak a_{+}\alpha+c-\mathfrak a_{+}^2D)-\beta(\mathfrak a_{-}\alpha+\alpha+c-\mathfrak a_{+}D), \\ \beta(\eta^{(1)}-{\eta^{(2)}})(I^{(1)}-I^{(2)})&=\gamma(\mathfrak a_{-}^2\alpha+\mathfrak a_{-}\alpha+c-D)-\mathfrak a_{+}\gamma(\alpha+\mathfrak a_{+}\alpha+c-\mathfrak a_{+}^2D) .\end{align*}
\begin{pro}[Laguerre--Freud equations]
	Laguerre-Freud Equations \eqref{eq:LF} for generalized Charlier multiple orthogonal polynomials are
\begin{align*}
    \alpha_{n+2}&=n+1-c-\alpha_{n+1}+\gamma_{n+2}^{-1}\left((-1)^{n}\beta_{n+1}(\eta^{(1)}-{\eta^{(2)}})+\gamma_{n+1}(\alpha_n+\alpha_{n-1}+c-n+1)\right),\\ \beta_{n+2}&=\beta_n+(\alpha_n+1-\alpha_{n+1})(\alpha_{n+1}+\alpha_n+c-n)+(-1)^{n+1}(\eta^{(1)}-{\eta^{(2)}}),\\ \gamma_{n+2}&=\gamma_n+\beta_n(\alpha_n+\alpha_{n-1}+c-n+1)-\beta_{n+1}(\alpha_{n+1}+\alpha_n+c-n)+\frac{\eta^{(1)}+{\eta^{(2)}}}{2}+(-1)^{n}\frac{\eta^{(1)}-{\eta^{(2)}}}{2}.
\end{align*}
\end{pro}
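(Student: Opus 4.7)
The proof will be essentially algebraic and direct: all the hard work was carried out in the pages preceding the statement, where we obtained the three element-wise nontrivial consequences of the Compatibility~I relation $[\Psi,T]=\Psi$. Those three scalar identities, read off diagonal by diagonal, are
\begin{align*}
\beta_{n+2}-\beta_{n} &= (\alpha_n+1-\alpha_{n+1})(\alpha_{n+1}+\alpha_n+c-n)+(-1)^{n+1}(\eta^{(1)}-\eta^{(2)}),\\
\tfrac{\eta^{(1)}+\eta^{(2)}}{2}+(-1)^n\tfrac{\eta^{(1)}-\eta^{(2)}}{2}&= \gamma_{n+2}-\gamma_n+\beta_{n+1}(\alpha_{n+1}+\alpha_n+c-n)-\beta_n(\alpha_n+\alpha_{n-1}+c-n+1),\\
(-1)^n\beta_{n+1}(\eta^{(1)}-\eta^{(2)}) &= \gamma_{n+2}(\alpha_{n+2}+\alpha_{n+1}+c-n-1)-\gamma_{n+1}(\alpha_n+\alpha_{n-1}+c-n+1).
\end{align*}
Thus the plan is simply to solve each of these three identities, in the appropriate order, for the highest-index unknown appearing on the left-hand side of~\eqref{eq:LF}.

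First, I would isolate $\beta_{n+2}$ from the superdiagonal equation; this is immediate because $\beta_{n+2}$ appears linearly and with coefficient $1$. The result is exactly the formula for $\beta_{n+2}$ stated in the proposition, depending only on $\beta_n$, $\alpha_n$, $\alpha_{n+1}$ and the parameters $c,\eta^{(1)},\eta^{(2)}$. Next, I would solve the main-diagonal identity for $\gamma_{n+2}$, which again enters linearly with coefficient $1$, and rearrange terms to obtain the stated expression. At this stage both $\beta_{n+2}$ and $\gamma_{n+2}$ have been expressed in terms of $\alpha_k,\beta_k,\gamma_k$ with $k\le n+1$, together with $\alpha_n,\alpha_{n+1}$.

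Finally, I would turn to the subdiagonal equation, where $\alpha_{n+2}$ appears linearly inside the factor $(\alpha_{n+2}+\alpha_{n+1}+c-n-1)$ multiplied by the already computed $\gamma_{n+2}$. Dividing by $\gamma_{n+2}$ and solving gives
\begin{align*}
\alpha_{n+2} = n+1-c-\alpha_{n+1}+\gamma_{n+2}^{-1}\bigl((-1)^{n}\beta_{n+1}(\eta^{(1)}-\eta^{(2)})+\gamma_{n+1}(\alpha_n+\alpha_{n-1}+c-n+1)\bigr),
\end{align*}
which is the third equation stated. The only technical subtlety is that this last step requires $\gamma_{n+2}\neq 0$, but this is guaranteed by Lemma~\ref{lemma:ATSystems}: for $c>-1$ and $\eta^{(1)},\eta^{(2)}>0$ the system $\{w^{(1)},w^{(2)}\}$ is AT, hence perfect, so $\tau_n\neq 0$ for every $n\in\N_0$ and therefore $\gamma_n=\tau_{n+3}\tau_n/(\tau_{n+2}\tau_{n+1})\neq 0$. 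No further ingredients are needed.

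Since the three compatibility identities have already been established before the statement, I expect essentially no obstacle in the proof beyond bookkeeping of signs and indices in the three linear solves; the only genuinely delicate point is justifying the division by $\gamma_{n+2}$, which is handled by the AT/perfectness argument above.
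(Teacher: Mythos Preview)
Your proposal is correct and follows exactly the paper's approach: the three element-wise identities from Compatibility~I are stated immediately before the proposition, and the Laguerre--Freud equations are obtained by solving each one linearly for $\beta_{n+2}$, $\gamma_{n+2}$, and $\alpha_{n+2}$ in turn. Your additional remark justifying $\gamma_{n+2}\neq 0$ via the AT/perfectness of the system is a nice touch that the paper leaves implicit.
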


\subsection{Generalized Meixner  of second kind multiple orthogonal polynomials}
In this case we have
\begin{align*}\theta(k)&=k(k+c),&\sigma^{(1)}(k)&=\eta(k+b_1),&\sigma^{(2)}(k)&=\eta(k+b_2).\end{align*}
The weights are:
\begin{align*}w^{(1)}(k)&=\frac{(b_1)_k}{(c+1)_k}\frac{\eta^k}{k!}, &w^{(2)}(k)&=\frac{(b_2)_k}{(c+1)_k}\frac{\eta^k}{k!},\end{align*}
if $b_1, b_2, \eta >0$ and $c>-1$, according with Lemma \ref{lemma:ATSystems}, it is an AT system.\\

In this case Laguerre--Freud matrix has five non-null diagonals, it is:
\begin{equation*}\Psi=(\Lambda^\intercal)^2\psi^{(-2)}+\dots+\psi^{(2)}\Lambda^2.\end{equation*}
Second and first superdiagonals are obtained using 
\(\Psi=\Pi^{-1}\theta(T)\):

\begin{align*}\psi^{(2)}&=I,& \psi^{(1)}&=\alpha+\mathfrak a_{-}\alpha+c-\mathfrak a_{+}D,& \psi^{(0)}&=\mathfrak a_{+}\beta+\beta+\alpha(\alpha+c)-\mathfrak a_{+}D(\mathfrak a_{+}\alpha+\alpha+c)+\mathfrak a_{+}^2\Pi^{[-2]}.\end{align*}
Using \(\Psi=\sigma^{(1)}(T_1^\intercal)\Pi_1^\intercal+\sigma^{(2)}(T^\intercal_2)\Pi_2^\intercal\):
\begin{align*}\psi^{(-2)}&=\eta\gamma,&\psi^{(-1)}&=\eta\beta,&\psi^{(0)}&=\eta(\alpha+I^{(1)}(\mathfrak a_{+}^2D_1^{[1]}+b_1)+I^{(2)}(\mathfrak a_{+}^2D_2^{[1]}+b_2)).\end{align*}
From Compatibility I we get three non-trivial equations from the first subdiagonal, the main diagonal and the first superdiagonal, respectively:
\begin{gather*}
	\eta(\mathfrak a_{+}\gamma-\gamma)+\gamma(\mathfrak a_{-}\alpha+\mathfrak a_{-}^2\alpha+c-D)-\mathfrak a_{+}\gamma(\mathfrak a_{+}\alpha+\alpha+c-\mathfrak a_{+}^2D)=\eta\beta\left(I^{(1)}+(b_1-b_2)(I^{(1)}-I^{(2)})\right),\\
\begin{multlined}[\textwidth]\gamma-\mathfrak a_{+}^2\gamma=\eta\left(\alpha+\beta-\mathfrak a_{+}\beta+I^{(1)}(\mathfrak a_{+}^2D_1^{[1]}+b_1)+I^{(2)}(\mathfrak a_{+}^2D_2^{[1]}+b_2)\right)+\mathfrak a_{+}\beta(\mathfrak a_{+}\alpha+\alpha+c-\mathfrak a_{+}^2D)\\-\beta(\alpha+\mathfrak a_{-}\alpha+c-\mathfrak a_{+}D),\end{multlined}\\
\mathfrak a_{-}\beta-\mathfrak a_{+}\beta+(\mathfrak a_{-}\alpha-\alpha)(\alpha-\mathfrak a_{-}\alpha+c-\mathfrak a_{+}D)+\eta\left(\alpha-\mathfrak a_{-}\alpha+(b_1-b_2)(I^{(1)}-I^{(2)})-I^{(2)}\right)=\alpha+\mathfrak a_{-}\alpha+c-\mathfrak a_{+}D.
\end{gather*}
Equations above can be expressed as
\begin{gather*}
	\begin{multlined}\eta(\gamma_{n+1}-\gamma_{n+2})+\gamma_{n+2}(\alpha_{n+1}+\alpha_{n+2}+c-(n+1))-\gamma_{n+1}(\alpha_{n}+\alpha_{n+1}+c-n)=\eta\beta_{n+1}\left(\frac{1+(-1)^n}{2}+(-1)^n(b_1-b_2)\right),\end{multlined}\\
\begin{multlined}[\textwidth]\gamma_{n+2}-\gamma_n+\beta_{n+1}(\alpha_n+\alpha_{n+1}+c-n)-\beta_n(\alpha_{n-1}+\alpha_n+c-(n-1))+\eta(\beta_n-\beta_{n+1})\\=\eta\left(\alpha_n+\frac{n-1}{2}+\left(\frac{1}{2}+b_2\right)\frac{1-(-1)^n}{2}+\frac{b_1}{2}(1+(-1)^n)\right),\end{multlined} \\
\begin{multlined}
	\alpha_n+\alpha_{n+1}+c-n=\beta_{n+2}-\beta_n+(\alpha_{n+1}-\alpha_n)(\alpha_n-\alpha_{n+1}+c-n)+\eta\left(\alpha_n-\alpha_{n+1}+(-1)^n(b_1-b_2)+\frac{(-1)^n-1}{2}\right),
	\end{multlined}
\end{gather*}
respectively.

From Compatibility II we obtain 
\begin{gather*}\eta(\mathfrak a_{+}\gamma-\gamma)+\gamma(\mathfrak a_{-}\alpha+\mathfrak a_{-}^2\alpha+c-D)-\mathfrak a_{+}\gamma(\mathfrak a_{+}\alpha+\alpha+c-\mathfrak a_{+}^2D)=\eta\beta(I^{(1)}+(b_1-b_2)(I^{(1)}-I^{(2)})),\\
\begin{multlined}[\textwidth]\gamma-\mathfrak a_{+}^2\gamma=\eta[\alpha+\beta-\mathfrak a_{+}\beta+I^{(1)}(\mathfrak a_{+}^2D_1^{[1]}+b_1)+I^{(2)}(\mathfrak a_{+}^2D_2^{[1]}+b_2)]+\mathfrak a_{+}\beta(\mathfrak a_{+}\alpha+\alpha+c-\mathfrak a_{+}^2D)\\-\beta(\alpha+\mathfrak a_{-}\alpha+c-\mathfrak a_{+}D)\end{multlined}\\
\begin{multlined}[\textwidth]\eta(I+\mathfrak a_{-}^2\alpha-\alpha)=T^2_{-}\beta+\mathfrak a_{-}\beta-\beta-\mathfrak a_{+}\beta+\mathfrak a_{-}^2\alpha(\mathfrak a_{-}^2\alpha+c)-\alpha(\alpha+c)-\mathfrak a_{-}D(\mathfrak a_{-}^2\alpha+\mathfrak a_{-}\alpha)+\mathfrak a_{+}D(\alpha+\mathfrak a_{+}\alpha)\\-2cI+\pi^{[-2]}-\mathfrak a_{+}^2\pi^{[-2]}.\end{multlined}
\end{gather*}
as we can see that first and second equations appear in compatibility I.

\begin{pro}[Laguerre--Freud equations]
	Laguerre-Freud equations \eqref{eq:LF} for  generalized Meixner II multiple orthogonal polynomials are
\begin{align*}
    \alpha_{n+2}&=n+1-c-\alpha_{n+1}+\gamma_{n+2}^{-1}\left(\gamma_{n+1}\left(\alpha_n+\alpha_{n+1}+c-n)+\eta(\gamma_{n+2}-\gamma_{n+1})+\eta\beta_{n+1}(\frac{1+(-1)^n}{2}+(-1)^{n}(b_1-b_2)\right)\right),\\
    \beta_{n+2}&=
    	\beta-n+\alpha_n+\alpha_{n+1}+c-n-(\alpha_{n+1}-\alpha_n)(\alpha-n-\alpha_{n+1}+c-n)-\eta\Big(\alpha_n-\alpha_{n+1}+(-1)^n(b_1-b_2)+\frac{(-1)^n-1}{2}\Big), 
   \\
    \gamma_{n+2}&=\begin{multlined}[t][\textwidth]
    	\gamma_n+\beta_n(\alpha_{n-1}+\alpha_n+c-n+1)-\beta_{n+1}(\alpha_n+\alpha_{n+1}+c-n)-\eta(\beta_n-\beta_{n+1})\\
    	+\eta\left(\alpha_n+\frac{n-1}{2}+\left(\frac{1}{2}+b_2\right)\frac{1-(-1)^n}{2}+\frac{b_1}{2}(1+(-1)^n)\right). 
    \end{multlined}
\end{align*}
\end{pro}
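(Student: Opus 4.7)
The plan is to derive the three Laguerre--Freud relations by solving, component by component, the three scalar identities displayed immediately before the proposition, which have already been extracted from the matrix compatibility condition $[\Psi,T]=\Psi$ (Compatibility I) by reading the coefficients of the first subdiagonal, the main diagonal, and the first superdiagonal of $\Psi$. Here the relevant diagonal matrices $I^{(a)}$, $D$, $\mathfrak a_{+}^2D^{[1]}_a$ are evaluated on the $n$-th entry using $(I^{(1)})_{nn}=\tfrac{1+(-1)^n}{2}$, $(I^{(2)})_{nn}=\tfrac{1-(-1)^n}{2}$, which is what produces the parity-dependent coefficients involving $\tfrac{1\pm(-1)^n}{2}$ and distributes the parameters $b_1,b_2$ to their correct parity classes.

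First, I would solve the first-subdiagonal identity for $\alpha_{n+2}$. Since this relation has the form
\begin{align*}
\gamma_{n+2}(\alpha_{n+1}+\alpha_{n+2}+c-(n+1))=\gamma_{n+1}(\alpha_n+\alpha_{n+1}+c-n)+\eta(\gamma_{n+2}-\gamma_{n+1})+\eta\beta_{n+1}\Big(\tfrac{1+(-1)^n}{2}+(-1)^n(b_1-b_2)\Big),
\end{align*}
it is already linear in $\alpha_{n+2}$ with leading coefficient $\gamma_{n+2}$, so dividing by $\gamma_{n+2}$ and isolating $\alpha_{n+2}$ directly yields the stated expression (assuming $\gamma_{n+2}\neq 0$, which is guaranteed by the AT hypothesis via Lemma \ref{lemma:ATSystems}).

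Second, I would rearrange the first-superdiagonal identity to solve for $\beta_{n+2}$. That identity has $\beta_{n+2}$ appearing linearly with coefficient $+1$ and all other terms involve only indices $\leq n+1$, so the extraction is a one-line manipulation producing the formula for $\beta_{n+2}$. Third, the main-diagonal identity contains $\gamma_{n+2}$ with coefficient $+1$ and only one other index-$(n+2)$ quantity, namely $\beta_{n+1}$, so rearranging gives the third Laguerre--Freud relation.

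The work is therefore essentially algebraic, and the main obstacle is not the isolation itself but the careful bookkeeping of the parity dependence: one must verify that the matrix terms $\eta I^{(a)}$, $\mathfrak a_{+}D$, and $\eta(\mathfrak a_{+}^2 D^{[1]}_a+b_a)I^{(a)}$ translate into exactly the scalar coefficients $\tfrac{1\pm(-1)^n}{2}(b_a+\cdots)$ appearing in the stated equations. Once this parity matching is confirmed, the three Laguerre--Freud equations follow immediately by linear rearrangement of the three compatibility scalar relations displayed just above the proposition.
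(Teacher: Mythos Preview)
Your proposal is correct and follows exactly the paper's (implicit) approach: the proposition is obtained by solving each of the three scalar identities from Compatibility~I, already displayed immediately before the statement, for the highest-index unknown $\alpha_{n+2}$, $\beta_{n+2}$, $\gamma_{n+2}$, respectively. The parity bookkeeping you mention has already been carried out in deriving those scalar identities, so the remaining work is indeed just the linear rearrangement you describe.
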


\section*{Conclusions and outlook}

Adler and van Moerbeke extensively utilized the Gauss--Borel factorization of the moment matrix in their studies of integrable systems and orthogonal polynomials \cite{adler_moerbeke_1,adler_moerbeke_2,adler_moerbeke_4}. We have also employed these ideas in various contexts, including CMV orthogonal polynomials, matrix orthogonal polynomials, multiple orthogonal polynomials, and multivariate orthogonal polynomials \cite{am,afm,nuestro0,nuestro1,nuestro2,ariznabarreta_manas0,ariznabarreta_manas01,ariznabarreta_manas2,ariznabarreta_manas3,ariznabarreta_manas4,ariznabarreta_manas_toledano}. For a comprehensive overview, refer to \cite{intro}.

In a recent work \cite{Manas_Fernandez-Irisarri}, we applied this approach to investigate the implications of the Pearson equation on the moment matrix and Jacobi matrices. To describe these implications, we introduced a new banded matrix called the Laguerre-Freud structure matrix, which encodes the Laguerre--Freud relations for the recurrence coefficients. We also discovered that the contiguous relations satisfied generalized hypergeometric functions, determining the moments of the weight described by the squared norms of the orthogonal polynomials. This led to a discrete Toda hierarchy known as the Nijhoff--Capel equation \cite{nijhoff}.

In \cite{Manas}, we further examined the role of Christoffel and Geronimus transformations in describing the aforementioned contiguous relations and used Geronimus--Christoffel transformations to characterize shifts in the spectral independent variable of the orthogonal polynomials. Building on this program, \cite{Fernandez-Irrisarri_Manas} delved deeper into the study of discrete semiclassical cases, discovering Laguerre--Freud relations for the recursion coefficients of three types of discrete orthogonal polynomials: generalized Charlier, generalized Meixner, and generalized Hahn of type I.

In \cite{Fernandez-Irrisarri_Manas_2}, we completed this program by obtaining Laguerre-Freud structure matrices and equations for three families of hypergeometric discrete orthogonal polynomials.

In this paper, we extend the previous lines of research to multiple orthogonal polynomials on the step line with two weights. We derive hypergeometric $\tau$-function representations for these multiple orthogonal polynomials and introduce a third-order integrable nonlinear Toda-type equation for these $\tau$-functions. Additionally, we discover systems of nonlinear discrete Nijhoff--Capel Toda-type equations satisfied by these objects. Finally, we apply the Laguerre--Freud matrix structure and compatibilities to derive Laguerre--Freud equations for the multiple generalized Charlier and multiple generalized Meixner of the second type.

As a future outlook, we highlight the following five lines of research:

\begin{enumerate}
	\item Finding larger families of hypergeometric AT systems.
	\item Analyzing the case of $p$ weights with $p>2$.
	\item Studying the connections between the topics discussed in this paper and the transformations presented in \cite{bfm,bfm2} and quadrilateral lattices \cite{quadrilateral1,quadrilateral2}.
	\item Exploring the relationship with multiple versions of discrete and continuous Painlevé equations involving the coefficients $\alpha_n$, $\beta_n$, and $\gamma_n$. The results presented in this paper constitute progress in this direction, as identifying this connection is a nontrivial problem that can be further investigated in future works.
	\item Discussing higher-order integrable flows and finding applications to multicomponent KP-type equations.
\end{enumerate}

\section*{Acknowledgments}
The authors acknowledges Spanish ``Agencia Estatal de Investigación'' research projects [PGC2018-096504-B-C33], \emph{Ortogonalidad y Aproximación: Teoría y Aplicaciones en Física Matemática} and [PID2021- 122154NB-I00], \emph{Ortogonalidad y Aproximación con Aplicaciones en Machine Learning y Teoría de la Probabilidad}.

\section*{Declarations}

\begin{enumerate}
	\item \textbf{Conflict of interest:} The authors declare no conflict of interest.
	\item \textbf{Ethical approval:} Not applicable.
	\item \textbf{Contributions:} All the authors have contribute equally.
	\item \textbf{Data availability:} This paper has no associated data.
\end{enumerate}

\end{document}